\DeclareRobustCommand{\bigtimes}{%
  \mathop{\vphantom{\sum}\mathpalette\@bigtimes\relax}\slimits@
}
\newcommand{\@bigtimes}[2]{\vcenter{\hbox{\make@bigtimes{#1}}}}
\newcommand{\make@bigtimes}[1]{%
  \sbox\z@{$\m@th#1\sum$}%
  \setlength{\unitlength}{\wd\z@}%
  \begin{picture}(1,1)
  \roundcap
  \linethickness{.17ex}
  \Line(.1,.1)(.9,.9)
  \Line(.1,.9)(.9,.1)
  \end{picture}%
}
\crefname{problem}{Problem}{Problems}
\Crefname{problem}{Problem}{Problems}
\renewcommand{\phi}{\varphi}
\newcommand{\epsi}{\varepsilon}
\newcommand{\TheTitle}{Neural Field Equations with random data}
\newcommand{\TheShortTitle}{Neural field equations with random data}
\newcommand{\TheAuthors}{D. Avitabile, F. Cavallini, S. Dubinkina, G.~J. Lord}
\headers{\TheShortTitle}{\TheAuthors}
\title{{\TheTitle}}
\author{%
    Daniele Avitabile\thanks{%
    Amsterdam Center for Dynamics and Computation,
    Department of Mathematics,
    Vrije Universiteit Amsterdam, 
    De Boelelaan 1111,
    1081 HV Amsterdam, The Netherlands.
    MathNeuro Team,
    Inria branch of the University of Montpellier,
    860 rue Saint-Priest
    34095 Montpellier Cedex 5
    France.
    (\email{d.avitabile@vu.nl}).
  }
  \and Francesca Cavallini\thanks{%
    Amsterdam Center for Dynamics and Computation,
    Department of Mathematics,
    Vrije Universiteit Amsterdam, 
    De Boelelaan 1111,
    1081 HV Amsterdam, The Netherlands.
  }
  \and Svetlana Dubinkina\footnotemark[2]
  \and Gabriel~J. Lord\thanks{%
    Department of Mathematics,
    Radboud University, 
    Postbus 9010,
    6500 GL Nijmegen,
    The Netherlands.
  }
}
\newcommand{\revision}[1]{\textcolor{black}{#1}}
\begin{document} 
\maketitle

\begin{abstract} 
  We study neural field equations, which are prototypical models of large-scale
  cortical activity, subject to random data. We view this spatially-extended,
  nonlocal evolution equation as a Cauchy problem on abstract Banach spaces, with
  randomness in the synaptic kernel, firing rate function, external stimuli, and
  initial conditions. We determine conditions on the random data that guarantee
  existence, uniqueness, and measurability of the solution for uncertainty
  quantification (UQ), and examine the regularity of the solution in relation to the
  regularity of the inputs. We present results for linear and nonlinear neural
  fields, and for the two most common functional setups in the numerical analysis of
  this problem. In addition to the continuous problem, we analyse in abstract form
  neural fields that have been spatially discretised, setting the foundations for
  analysing UQ schemes.
 \end{abstract}


 \section{Introduction}\label{sec:intro} Modelling and forecasting
 brain dynamics is a fundamental challenge in biology.
 Although voltage dynamics of single cells are well described by models of Hodgkin--Huxley
 type,
 \cite{izhikevichDynamicalSystemsNeuroscience2006,ermentroutMathematicalFoundationsNeuroscience2010,
 bressloffWavesNeuralMedia2014, gerstnerNeuronalDynamicsSingle2014,
coombesNeurodynamicsAppliedMathematics2023}, 
 the picture complicates considerably for neuronal ensembles. Research efforts have
 been made to couple and simulate massive numbers of Hodgkin--Huxley or spiking
 neurons with anatomical realism
 \cite{markramReconstructionSimulationNeocortical2015,schmidtMultiscaleLayerresolvedSpiking2018},
 but analysing and simulating wide cortical sheets with microscopic detail continues
 to be a challenge. An alternative strategy is to trade biological realism at the
 microscale for mathematical tractability at the macroscale; this gives rise to
 models representing the cortex as a continuum, and cortical activity as a scalar
 field. These \textit{neural field equations} (see \cref{eq:NFDeterministic}),
 albeit phenomenological in nature, support
 cortical patterns observed in experiments (see~\cite{coombes2014neural} for a
 monograph); in addition, neural fields expose inputs and outputs (in both functional
 and parametric form) that can be fit to data 
 \cite{huangSpiralWavesDisinhibited2004a, schiffKalmanFilterControl2008,
   pinotsisNeuralFieldsSpectral2011,gonzalez-ramirezl2015}.
 
 This paper sets the theoretical foundations to quantify uncertainty in
 neurobiological cortical models at the macroscale, by studying \textit{neural field
 equations} such as  \eqref{eq:NFDeterministic} subject to random data. The neural
 field equation does not fit in the traditional ODE/PDE framework of e.g.
 \cite{smithUncertaintyQuantificationTheory2014,
 sullivanIntroductionUncertaintyQuantification2015,
 ghanemHandbookUncertaintyQuantification2017,calvettiBayesianScientificComputing2023},
 as they are integro-differential equations. Instead they require a dedicated treatment,
 guided by the existing literature on PDEs with random data. Furthermore,  numerical
 schemes for forward and inverse UQ in PDEs rely on well-posedness and regularity
 results for continuous and semidiscrete PDEs with random data and such
 characterisation is absent for spatially-extended cortical models and we fill this
 gap.

 We defer to later a discussion on the applicability of our results to more (and less)
 realistic models, and we now introduce neural fields, discuss their input data, and
 present motivating numerical simulations.
 
 \subsection*{Deterministic model, and sources of randomness} The simplest and most
 popular neural field is the following integro-differential equation: 
   \begin{equation}
   \label{eq:NFDeterministic} 
   \begin{aligned} 
   & \partial_t u(x,t) = -u(x,t) + \int_D
     w(x,x') f(u(x',t))\, dx' + g(x,t), && (x,t) \in D \times [0,T], \\ 
  & u(x,0) = v(x),                      && r \in D. 
  \end{aligned} 
  \end{equation} 
 Proposed
 independently by Wilson and Cowan~\cite{wilson1973mathematical}, and by
 Amari~\cite{amari1977dynamics}, the neural field presented above  is a
 spatially continuous, coarse-grained model of cortical activity. The cortex $D$ is
 a compact in $\RSet^d$, and the state variable $u(x,t)$ models the voltage of a
 neural patch at time $t$ and position $x \in D$.
 The function $w(x,x')$ is the \textit{synaptic kernel}, modelling the strength of
 synaptic connections from point $x'$ to point $x$ in the tissue. Some connections
 run within the cortex (through the grey matter), while others are bundled in fibres
 that leave and re-enter the cortex over long-range distances (through the white
 matter). Depending on the scale at which the model is posed, the function $w(x,x')$
 encodes either type of connection, or both.
 Nonlocal contributions are weighed by the synaptic kernel, and regulated by the
 nonlinear function $f$, which models the neuronal population's \textit{firing
 rate}, and it is typically a sigmoid with variable steepness. The functions $g$ and
 $v$ represent the external inputs and the initial voltage, respectively.



 The well-posedness of deterministic neural fields has been proved with functional
 analytical methods, viewing the problem as an ODE on a Banach space $\XSet$. In this
 area, two groups worked independently and simultaneously on the cases $\XSet =
 C(D)$~\cite{Potthast:2010kb}, the space of continuous functions on $D$, and
 $\XSet = L^2(D)$~\cite{Faugeras:2009gn}, the space of square-integrable functions on $D$. In
 a similar spirit, the problem with delays has been studied with fixed-point arguments
 \cite{Faye.2010}, and with a bespoke approach based on sun-star calculus
 \cite{Gils.2013}.  

 To date, noise in neural fields has been introduced solely in the form of a
 stochastic forcing $g$, thus turning the problem into a stochastic
 integro-differential equation
 \cite{kilpatrick2013wandering,Kuehn:2014im,faugerasStochastic2015,
 maclaurin2020wandering,limaNumerical2022,krugerWellposednessStochasticNeural2018,NakamuraInverseMod,
 otsetovaErgodicityStochasticNeural2025}.
 This line of work differs from the one
 taken here, where we deal with equations with random data, multiple sources of noise,
 and numerical analysis.

 We aim to open up the possibility of treating
 forward and inverse problems within a Bayesian framework. In particular, we formalise
 neural fields as Cauchy problems in which the functions $w$, $f$, $g$, and $v$ are
 independent but concurrent random fields in a suitable Bochner space, we define
 solutions to the problem, and study their properties.
 \begin{figure}
   \centering
   \includegraphics{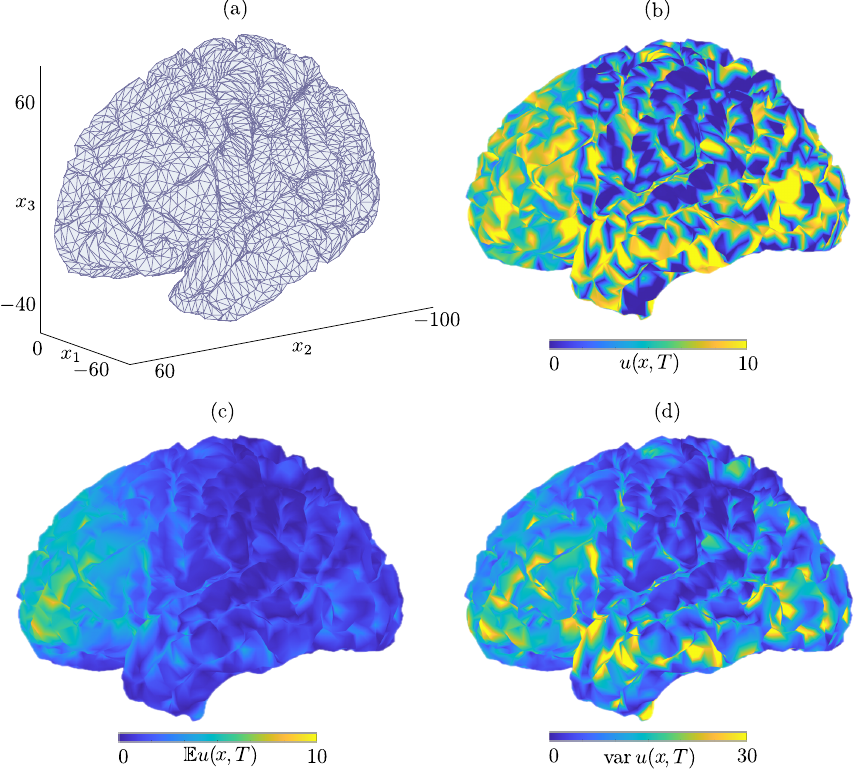}
   \caption{Example of a forward UQ problem for the neural field equation
      \cref{eq:NFDeterministic}, with random data
      \crefrange{eq:sigmoidalExample}{eq:kernelExample}. (a) The problem is discretised in space
      using a triangulated mesh with $n = 10242$ nodes, taken from the Human Connectome
      Project~\cite{marcusInformaticsDataMining2011}, and features 39459 random parameters
      in the definitions of $f$, $g$, and $w$ (see main text). 
      A Finite-Element
      scheme~\cite{avitabile2024} is combined with standard adaptive Runge-Kutta $4$th
      order scheme to timestep the problem up to time $T$. (b) Sample of the final
      solution $u(x,T)$
      \href{https://figshare.com/articles/media/Animations_for_the_paper_Neural_fields_with_random_data_/27559950?file=50168907}{(link to an animation of $u(x,t)$ for $t \in [0,T]$)}. 
      (c) Estimate of the expectation of $u(x,T)$, computed using
      $100$ Monte Carlo samples
      \href{https://figshare.com/articles/media/Animations_for_the_paper_Neural_fields_with_random_data_/27559950?file=50168910}{(link
      to animation)}. 
      (d) Estimated variance of $u(x,T)$
      \href{https://figshare.com/articles/media/Animations_for_the_paper_Neural_fields_with_random_data_/27559950?file=50168913}{(link
      to animation)}. The random
      parameters are uniformly and independently distributed (see main text) on the
      intervals $[\alpha_f,\beta_f]=[0,3]$, $[\alpha_\mu,\beta_\mu]=[10,15]$
      $[\alpha_c,\beta_c]=[1,10]$, and $[\alpha_w,\beta_w]=[0,3]$, respectively. Other parameters: $h= 0.5$, $A= 10$,
      $(y_{1},y_{2},y_{3})=(-27,70,43)$, $\sigma =(30,1,30)$, $\sigma_w=10/3$, $\rho=\sqrt{\sigma_w \ln 10}$, and $T = 10$.
   }
   \label{fig:monteCarloSimulation}
 \end{figure}

 \subsection*{A motivating example}
 To give an example of the computations targeted by our work, we describe the numerical
 experiment in \cref{fig:monteCarloSimulation}, showing computations of a forward UQ
 calculation for model \cref{eq:NFDeterministic}. The example studies cortical responses to a
 sharp, localised stimulus imparted on the prefrontal cortex as may be done, for
 instance, in Transcranial Magnetic Stimulation
 ~\cite{gomez-tamesReviewBiophysicalModelling2020}. We examine how the neural field
 model \cref{eq:NFDeterministic} propagates uncertainties in firing rate, synaptic
 kernel, and external stimulus.

 The domain $D$ is a triangulated surface in
 $\RSet^3$ representing the left hemisphere of human cortex (mesh downloaded from the
 Human Connectome Project~\cite{marcusInformaticsDataMining2011}) comprising $n$
 vertices $\{ \xi_1,\ldots,\xi_n\} =: \Xi$. The initial condition of the problem is
 deterministic, and set to $v(x) \equiv 0$. Random data in the neural field arises
 via parametrised random fields, which we introduce informally for the time being.
 The firing rate is a sigmoidal
\begin{equation}\label{eq:sigmoidalExample}
  f(u)=\frac{f^*}{1+e^{-\mu^*(u-h)}}, 
  \qquad  f^* \sim \mathcal{U}[\alpha_f,\beta_f],
  \qquad  \mu^* \sim \mathcal{U}[\alpha_\mu,\beta_\mu],
\end{equation}
with random maximum value $f^*$ and steepness parameter $\mu^*$, and
deterministic activation threshold $h$. The external stimulus is a localised pulse
centred initially at $(y_{1}, y_{2}, y_{3})$, and travelling along the horizontal axis with random
speed $c^* \sim \mathcal{U}[\alpha_c,\beta_c]$, 
\begin{equation}\label{eq:forcingExample}
    g\bigl((x_1,x_2,x_3),t\bigr)=
    \frac{A}{\cosh((x_2-y_{2}+t c^*)/\sigma_{2})^2}
  \exp \Biggl[-\frac{(x_1-y_{1})^2}{2\sigma^2_{1}} -
  \frac{(x_3-y_{3})^2}{2\sigma^2_{3}}\Biggr],
\end{equation}
in which $A$ is the forcing amplitude, and $\sigma =
(\sigma_{1},\sigma_{2},\sigma_{3})$ its
characteristic length scales.

The synaptic kernel $w(x,x')$ is a random perturbation of a deterministic kernel
$k(x,x')$. More precisely, the deterministic kernel $k(x,x')$ depends on the
Euclidean distance between points, and is set to zero for long-range interactions, as
follows
\begin{equation}\label{eq:kernelExample}
  k(x,x') = K(\|  x-x' \|_2), \qquad 
  K(x) = e^{-x^2/\sigma_w} 1_{[-\rho,\rho]}(x)
\end{equation}
The random perturbation to $k$ is obtained via piecewise-constant functions with random values, and
supported in a small neighbourhood of each vertex of the triangulation at which $k$
is nonzero,

\begin{equation}\label{eq:kernelPertExample}
  w(x,x') = k(x,x') + \sum_{(\xi_i,\xi_j) \in \Lambda} W^*_{ij} \,
  1_{B(\xi_i,\epsi)}(x)
  1_{B(\xi_j,\epsi)}\revision{(x')}
  \qquad  
  W^*_{ij} \stackrel{\text{i.i.d.}}{\sim} \mathcal{U}[\alpha_w,\beta_w],
\end{equation}
with $\Lambda =  (\Xi \times \Xi) \cap \supp(k)$ 
and $B(\xi,\epsi) = \{ x \in \RSet^3 \colon \| x -\xi \|_2<\epsi\}$. The parameter
$\epsi$ is chosen small enough to guarantee that, for any index $i$, each ball
$B(\xi_i,\epsi)$ does not
contain any vertex in the triangulation other than $\xi_i$. An alternative way to
make sense of the random perturbation to $k$ is at discrete level: the problem is discretised
using finite elements, and the kernel $k$ gives rise to a sparse finite element
matrix~\cite{avitabileProjectionMethodsNeural2023,avitabile2024}, which we perturb by
adding uniform, independently distributed random variables to each one of its nonzero
entries.

We time step the resulting ODE using a Runge-Kutta 4th order scheme with adaptive
stepsize. The forward UQ problem has $n= 10242$ unknowns, and 39459
random parameters, 3 of which come from $f$ and $g$, and the rest from the kernel.
In \cref{fig:monteCarloSimulation}(b) we show one realisation of
solution at final time $T=10$. We observe local regions of activity due to the wave
forcing $g$ as well as nonlocal regions given by random long-range synaptic
connections intensified by the random data in the kernel. The quantities of interest
of the problem are the mean and variance of the solution at final time. We
approximate them here via a Monte Carlo method with $100$ samples and we plot them in
\cref{fig:monteCarloSimulation}(c,d), respectively. The
paper~\cite{avitabile2024Stochastic} is a companion to the present one, and studies
stochastic collocation for a similar task. Examples such as this arise naturally in
applications, and we consider here the rigorous foundations
for their analysis by addressing the following questions: How should we define random
data for a neural field? Is problem \cref{eq:NFDeterministic} subject to random data
well-posed? In which function spaces do the solution and the random data live, and
what is the regularity of the solutions?  How do finite element or other spatial discretisations
affect this analysis (see \cref{thm:existenceNF_Pn})? 

\subsection*{Summary of main results} With the view of addressing UQ problems in
 general form, we cast neural fields with random data as abstract nonlinear Cauchy
 problems, with random vectorfields, posed on an
 infinite-dimensional Banach space $\XSet \in \{ C(D), L^2(D)\}$. Our aim is to cover at once both
 functional settings available in the literature on deterministic neural fields, and
 on their numerical simulation. To this end, we build a theory which does not rely
 explicitly on the choice of the phase space: users are only required to check
 hypotheses that depend on $\XSet$ at the outset, and they can use the provided
 estimates in the corresponding natural norm thereafter.

 We envisage two use-cases for this work: one in which $f$ is nonlinear and bounded,
 and  one in which $f$ is linear. The latter is a good testing ground to develop
 UQ algorithms, and it is also seen in applications for which the dynamics of
 interest is a small perturbation around a rest state
 ~\cite{polyakovNeuralFieldTheory2024}. Even though the two cases require different
 hypotheses and technical treatment, we present results and proofs in parallel,
 whenever possible. In particular:
 \begin{enumerate}
   \item We outline hypotheses on the synaptic kernel, firing rate, external
     forcing, and initial conditions that guarantee the well-posedness of neural field
     problems with random data. Deterministic neural fields enjoy classical solutions,
     hence we seek for bounds in the strong norms on $C^r([0,T], \XSet)$ with $r = 0,1$.
     In the case of noisy data, well-posedness entails the existence and
     uniqueness of a strongly-measurable random variable $\omega \mapsto u(x,t,\omega)$
     taking values in $C^r([0,T],\XSet)$, and satisfying almost surely a random
     version of the neural field equation (see \cref{thm:existenceRealisationLNF}).

   \item We then look into regularity of the solutions. We initially consider
     non-parametric input data, in a suitable Bochner space, and prove that
     $L^p$-regular input data results in $L^p$-regular random solutions, that is, we
     determine conditions on the random data that guarantee $u \in
     L^p(\Omega,C^1([0,T],\XSet))$ (see \cref{thm:LpRegularity}).
     We then derive analogous results for parametric input data, in the form of
     finite-dimensional noise of arbitrary size
     (see \cref{lem:finDimNoise,cor:uLRho}). This is a useful characterisation when
     the input data comes, for instance, from truncated Karhunen-Loève expansions.
   \item
     Existing numerical schemes for deterministic neural fields 
     discretise space using a projector on a finite-dimensional subspace of
     $\XSet$~\cite{atkinson1997,avitabileProjectionMethodsNeural2023}. A-priori
     estimates for UQ schemes in PDEs require well-posedness and regularity results
     for the semi-discrete problem, but they are not available for neural fields.
     With the view of facilitating the numerical analysis of UQ schemes, we provide
     the well-posedness and regularity results mentioned above also for semi-discrete
     versions of neural fields, in abstract form, and for generic projectors
     (\cref{thm:LpRegularity_Pn,cor:uLRho_Pn}).
     These estimates are thus immediately applicable in neural field models
     discretised with collocation or Galerkin schemes, Finite-Elements or Spectral
     methods, as is done for instance in ~\cite{avitabile2024Stochastic}. With this
     approach, we treat at once schemes in both strong and weak form.
 \end{enumerate}
 
\subsection*{Overview} 
The rest of the manuscript is organised as follows. In \cref{sec:notation} we set the
notation and give preliminary definitions. \Cref{sec:problemRandomData} describe the
hypotheses and functional-analytic setup for individual realisations of the input
data. The main theorems on the well-posedness and $L^p$-regularity of solution
are given in \cref{sec:uniqueness}, together with a discussion on random Volterra
integral operators. Implications and results of the standard Finite Dimensional Noise
assumptions on our problem are discussed in \cref{sec:FinDimNoise}. \Cref{sec:projProblem} is devoted to proving the statements of the main theorems for
the projected version of problem. In \cref{sec:conclusions} we comment on the
applicability of our results to connectomic ODEs and other neurobiological networks,
and state future research directions.

\section{Mathematical setting and notation}\label{sec:notation} 
We let $\NSet_m:=\{1,2,\ldots,m\}$. 
For a compact $J \subset \RSet$ and Banach space $\YSet$, we denote by $C^k(J,\YSet)$
the space of $k$-times continuously differentiable functions with 
\[
  \| f \|_{C^k(J,\YSet)} = \sum_{i \in \NSet_k} \| D^i f \|_\infty, \qquad \| f
  \|_{\infty} := \sup_{t \in J} \| f(t) \|_{\YSet}.
\]

Further, we denote by $BC^0(\RSet)$ or $BC(\RSet)$, the space of bounded and
continuous real-valued functions defined on $\RSet$, with norm $\| \blank
\|_\infty$. For a $k \geq 1$ we define
\[
  BC^k(\RSet) = \{ f \in BC^{k-1}(\RSet) \cap C^k(\RSet) \colon 
  f^{(k)} \in BC(\RSet) \},
\]
with norm $\| f \|_{BC^k(\RSet)} = \sum_{i \in \NSet_k}\| f^{(i)} \|_{\infty}$. 

For fixed Banach spaces
$\XSet,\YSet$, we use $BL(\XSet,\YSet)$ to indicate the Banach space of bounded
linear operator on $\XSet$ to $\YSet$, with the standard operator norm
\[
  \| A \|_{BL(\XSet,\YSet)} = \sup_{x \in \XSet \setminus \{0\}} \frac{\| A x
  \|_\YSet}{\|  x \|_\XSet},
\]
and set $BL(\XSet) := BL(\XSet,\XSet)$. We also work with $K(\XSet) \subset
BL(\XSet)$, the subspace of compact linear operators on $\XSet$ to itself, with norm
$\| \blank \|_{BL(\XSet)}$.

We recall the two notions of \textit{random} and \textit{strong (or Bochner) random}
variable with values in a Banach space, which are relevant for this work. Consider a
probability space $(\Omega,\calF,\prob)$ and the measurable space
$(\YSet,\calB(\YSet))$, where $\YSet$ is a Banach space, and $\calB(\YSet)$ its Borel
$\sigma$-field. 

\begin{definition}[Random variable]\label{def:randomVar}
A mapping $u \colon \Omega \to \YSet$ is a \textit{$\YSet$-valued random variable} if
it is measurable from $(\Omega,\calF)$ to $(\YSet,\calB(\YSet))$, that is, the set
$\{ \omega \in \Omega \colon u(\omega) \in B \}$ belongs to $\calF$
for any $B \in \calB(\YSet)$.
\end{definition}

\begin{definition}[Strong random variable]\label{def:strongRandomVar}
A mapping $u \colon \Omega \to \YSet$ is a \textit{strongly $\YSet$-valued
  $\prob$-measurable random
variable} if it is the pointwise limit of $\prob$-simple functions, that is, there
exists a sequence $\{ u_n \}_{n \in \NSet}$ of functions
\[
  u_n(\omega) = \sum_{i=1}^{I_n} 1_{\Omega_i} (\omega) y_i, \qquad \Omega_i \in
  \calF, \quad y_i \in \YSet, \qquad  I_n \colon \NSet \to \NSet,  
\]
such that $\lim_{n \to \infty} \|  u(\omega) - u_n(\omega) \|_\YSet \to 0$ as
as $n \to \infty$ for $\prob$-almost all $\omega \in \Omega$. 
\end{definition}

When the probability measure $\prob$ is clear from the context, we write that
$u$ is a \textit{strongly $\YSet$-valued random variable}. When $\YSet=\RSet$, we
write that $u$ is a \textit{strongly $\prob$-measurable random variable}, or a
\textit{strong random variable}.

\begin{remark}\label{rem:strongVsWeak}
If $\YSet$ is separable, \cref{def:randomVar,def:strongRandomVar} are equivalent (see
\cite[Definitions 1.10, 1.13, and page 16]{bharucha-reidRandomIntegralEquations} and
\cite[Definition 1.14, and Section 1.1.a]{hytonenAnalysisBanachSpaces2016a}), 
\revision{
and so are the underlying concepts of \textit{measurability} and \textit{strong
measurability}}. We
often, but not always, work with separable spaces, hence we adopt the notion of
strong random variable, even though we may verify measurability using preimages of
Borel sets when $\YSet$ is separable.
\end{remark}

We introduce the Bochner spaces $L^p(\Omega,\calF,\prob; \YSet)$, or simply
$L^p(\Omega,\YSet)$, where $p \in [1,\infty]$ as the equivalence classes
of \textit{strongly} $\YSet$-valued random variables endowed with norms
\[
  \begin{aligned}
  & \| u \|_{L^p(\Omega,\YSet)} = ( \mean \| u \|^p)^{1/p} = 
    \bigg(
    \int_\Omega \| u(\omega) \|^p_\YSet \, d\prob(\omega)
    \bigg)^{1/p}, \qquad p \in [1,\infty), \\
  & \| u \|_{L^\infty(\Omega,\YSet)} = \pesssup_{\omega \in \Omega} \| u(\omega)
  \|_\YSet. \\
  \end{aligned}
\]
\begin{remark}
We emphasise here that this definition of Bochner space, taken from \cite[Definition
1.2.15]{hytonenAnalysisBanachSpaces2016a} requires only strong measurability (not
measurability) and is applicable to inseparable as well as separable Banach spaces
$\YSet$.
\end{remark}
\begin{remark}
  \revision{
  We recall that a statement $S(\omega)$ holds \textit{for almost every $\omega \in
  \Omega$} if there exists a set $\calA \in \Omega$ such that $\prob(\calA) = 0$, and
$S(\omega)$ holds for all $\omega \in \Omega \setminus \calA$. If, in a passage, we
fix $\omega$, it is implied that $\omega \in  \Omega \setminus \mathcal{A}$.}

\end{remark}

\section{Problem with random data \texorpdfstring{on \boldmath$D \times J \times
\Omega$}{}}\label{sec:problemRandomData}

We cast the neural field problems as ODEs with random data, on suitable
function spaces. The first step towards this characterisation is to make a standard
hypothesis on the spatio-temporal domain of the problem. Throughout the paper this
domain is a compact in $\RSet^d \times \RSet$.

\begin{hypothesis}[Spatio-temporal domain]\label{hyp:domain} It holds $(x,t) \in D
  \times J$, where $D\subset \mathbb{R}^d$ is a compact domain with piecewise smooth
  boundary, and $J = [0,T] \subset \RSet$.
\end{hypothesis}


We formalise our sources of randomness in a similar fashion to Zhang and
Gunzburger, who studied linear parabolic PDEs~\cite{Zhang.2012}. We therefore
consider the probability spaces 
$(\Omega_w,\calF_w,\prob_w)$,
$(\Omega_f,\calF_f,\prob_f)$,
$(\Omega_g,\calF_g,\prob_g)$,
$(\Omega_{v},\calF_{v},\prob_{v})$ or, in compact form
\[
  (\Omega_\alpha,\calF_\alpha,\prob_\alpha), \qquad \alpha \in \USet = \{ w,f,g,v \}. 
\]

We introduce the mappings
\begin{equation}\label{eq:randomData}
\begin{aligned}
  & w \colon D \times D \times \Omega_w \to \RSet, 
  && f \colon \RSet \times \Omega_f \to \RSet, \\
  & g \colon D\times J \times \Omega_g \to \RSet, 
  && v \colon  D \times \Omega_v \to \RSet,
\end{aligned}
\end{equation}
and we are interested in how uncertainty is propagated by the neural field
model \cref{eq:NFDeterministic}.

We consider two separate cases: one in which $f(u)$ is bounded but nonlinear, and
one in which is linear, namely $f(u) = u$. As mentioned above, that the two problems
require different
assumptions, and a separate treatment.
In the treatment below, we present the problems in parallel
whenever possible. To streamline the notation, we make use of the binary index
$\LSet$, which takes the value $1$ for the linear problem, and $0$ for the nonlinear
one.

We make the natural assumption that the sources of noise are independent, as follows.

\begin{hypothesis}[Independence] The random fields $w$, $f$, $g$, $v$ are mutually
  independent: the event space $\Omega$, $\sigma$-algebra $\calF$, and probability
  measure $\prob$ are given by
\[
  \Omega = \bigtimes_{\alpha \in \USet} \Omega_\alpha, \qquad 
  \calF = \bigtimes_{\alpha \in \USet} \calF_\alpha, \qquad 
  \prob = \prod_{\alpha \in \USet} \prob_\alpha, \qquad
  \quad  
  \mathbb{U} = 
  \begin{cases}
    \{w,g,v\} & \text{if $\LSet = 1$,} \\[0.5em]
    \{w,f,g,v\} & \text{if $\LSet = 0$.}
  \end{cases}
\]
\end{hypothesis} 

An event $\omega \in \Omega$ is written, using its components, as $\omega = \{
 \omega_\alpha \colon \alpha \in \USet\}$.
We can now define informally the neural field problems with random data: given $w$,
$g$, $v$, and possibly $f$ in \cref{eq:randomData}, we seek for a mapping $u \colon D\times J \times
\Omega \to \RSet$ such that for $\prob$-almost all $\omega \in
\Omega$
\begin{equation}\label{eq:nonlinRNF}
  \begin{aligned}
    &\partial_t u(x,t,\omega)  =  -u(x,t,\omega) + g(x,t,\omega_g) 
     + \int_D w(x,x',\omega_w) f(u(x',t,\omega),\omega_f)\, dx',  \\
    &u(x,0,\omega)  = v(x,\omega_{v}),                   
\end{aligned}
\end{equation}
or, in the linear case,
\begin{equation}\label{eq:linRNF}
  \begin{aligned}
    & \partial_t u(x,t,\omega)  =  -u(x,t,\omega) + g(x,t,\omega_g) 
			       + \int_D w(x,x',\omega_w) u(x',t,\omega)\,dx',\\
    & u(x,0,\omega) = v(x,\omega_{v}).                         
\end{aligned}
\end{equation}
We now aim to formalise the concept of solutions to the neural field with random data,
and to establish conditions for the existence and uniqueness of such solutions.

\subsection{Evolution equations in operator form \texorpdfstring{on \boldmath$J \times
\Omega$}{}} We begin by casting 
\cref{eq:linRNF,eq:nonlinRNF} as ODEs on a Banach space $\XSet$ with random data.

\begin{hypothesis}[Phase space]\label{hyp:phaseSpace} The phase space is either
  $\XSet$ = $C(D)$, the space of continuous functions on $D$ endowed with the supremum
  norm $\| \blank \|_\infty$, or $\XSet = L^2(D)$, the Lebesgue space of
  square-integrable functions on $D$, endowed with the standard Lebesgue norm $\|
\blank \|_2$. We will compactly write $\XSet \in \{ C(D), L^2(D)\}$.
\end{hypothesis}

With the view of rewriting \cref{eq:nonlinRNF,eq:nonlinRNF} as a
Cauchy problem in operator form, we interpret the solution $u$ as a mapping on
$\XSet$, that is, we define
\[
  U \colon J \times \Omega \to \XSet, \qquad U(t , \omega) = u(\blank,t,\omega).
\]
To keep the notation under control, we use the same letter
$u$ to designate both the mapping on $D \times J \times \Omega$ to $\RSet$, and the
corresponding one on $J \times \Omega$ to $\XSet$. A similar consideration is valid
for the forcing $g$. The initial condition $v$ is also seen as a mapping on $\Omega$
to $\XSet$.

We introduce a few operators, instrumental for the discussion on the Cauchy
problem. Firstly, we need an operator-valued random variable associated to the synaptic kernel, and
whose realisations are linear operators on $\XSet$ to itself, namely
\begin{equation}\label{eq:WOp}
  W(\omega_w)(v) := \int_D w(\blank,x',\omega_w) v(x')\,dx'. 
\end{equation}
Secondly we introduce a Nemytskii operator associated to the firing rate
\begin{equation}\label{eq:Nemytskii}
  F(u,\omega_f)(x) := 
  \begin{cases}
    u(x) & \text{if $\LSet = 1$,} \\
  f(u(x),\omega_f)& \text{if $\LSet = 0$,}
  \end{cases}
\end{equation}
and, thirdly, a mapping for the vector field
\begin{align}
 & N(t,u,\omega_w, \omega_f, \omega_g) := -u +
 W(\omega_w)F(u , \omega_f) + g(t , \omega_g) \label{eq:NDef}.
\end{align}
Note that, with the definitions above, selecting $\LSet = 1$ makes $F(u,\omega_f)$
independent of $\omega_f$, hence deterministic.
The problem of finding a random field $u \colon J \times D \times \Omega \to \RSet$
satisfying \cref{eq:nonlinRNF} or \cref{eq:linRNF} $\prob$-almost surely can now be
formalised in the problem below.

\begin{problem}[Random Neural Field on $J \times \Omega$]\label{prob:NRNF}
Fix $\LSet,w,g,v$, and possibly $f$. Find a random solution $u \colon J \times \Omega
\to \XSet$ to the Random Neural Field equation\footnote{A few considerations on
    the notation used here and henceforth. Firstly, we have indicated
    with a prime differentiation with respect to time, that is $u'(t,\omega) :=
    \partial_t u(t,\omega)$. Secondly, we will sometimes write, with a small abuse of
    notation $N(\blank,\blank , \omega)$ in place of the more cumbersome
    $N(\blank,\blank , \omega_w, \omega_g)$, when $\LSet = 1$, or
$N(\blank,\blank,\omega_w,\omega_f,\omega_g)$, when $\LSet = 0$.
}
\begin{equation}\label{eq:NRNFOp}
  \begin{aligned}
  & u'(t , \omega) = N(t, u(t , \omega) , \omega), \qquad t \in J,\\
  & u(0 , \omega) = v(\omega_v),
  \end{aligned}
\end{equation}
that is, a mapping $u$ satisfying
\begin{remunerate}
  \item\label{prob:C1Rgularity} $\omega \mapsto u(\blank,\omega)$ is a
    strong $C^1(J,\XSet)$-valued
    random variable.
  \item\label{prob:probabilityOne} $\prob( B_0 \cap B_J )=1$, with
    \[
        B_0 = \{ \omega \in \Omega \colon u(0, \omega) = v(\omega_v) \}, 
        \qquad  
        B_J = \{ \omega \in \Omega \colon 
            u'(\blank , \omega) = N(\blank , u(\blank , \omega) , \omega) \text{ on $J$} 
          \}.
    \]
\end{remunerate}
\end{problem}
Note that the first condition on $u$ requires strong measurability, as per
\cref{def:strongRandomVar} or equivalently, in view of the separability of
$C^1(J,\XSet)$ and \cref{rem:strongVsWeak}, measurability as per
\cref{def:randomVar}. We stress that \cref{eq:NRNFOp} is a rewriting of
\cref{eq:nonlinRNF,eq:linRNF}
in operator form but, in order to complete the definition of \cref{prob:NRNF}, it is
necessary to formalise the random
linear and nonlinear operators $W(\omega_w)$ and $N(\blank,\omega)$, and we
henceforth proceed in this direction.

\subsection{Hypotheses on random data realisations}
To make progress on the well-posedness of \cref{prob:NRNF}, we make
some technical assumptions on the random fields \cref{eq:randomData}, and discuss
consequent ancillary results. Some of these assumptions originate from the ones
required for the well-posedness of deterministic neural fields. 

We begin by introducing a Banach space useful to discuss the synaptic kernel. For a
fixed bivariate function $k \colon D \times D \to \RSet$, we let
\[
  \nu(h;k) = \max_{x,z \in D} \max_{\| x-z \|_2 \leq h } 
      \int_D |k(x, x') - k(z, x') |\, dx', \qquad h \in \RSet_{ \geq  0},
\]
and we define
\[
  \WSet(\XSet) := 
  \begin{cases}
    \displaystyle{
    \Big\{ 
      k \in C(D,L^1(D)) \colon 
      \lim_{h \to 0} \nu(h; k) = 0
    \Big\},
  }
    & \text{if $\XSet = C(D)$,} \\[1em] 
    L^2(D \times D), & \text{if $\XSet = L^2(D)$.} \\
  \end{cases}
\]
We henceforfth write $\WSet$ in place of $\WSet(\XSet)$, and endow it with the usual
norm on $C(D,L^1(D))$ or $L^2(D\times D)$, respectively,
\[
  \| k \|_\WSet = 
  \begin{cases}
  \displaystyle{ \max_{x \in D} \int_D |k(x, x')| \, dx', }& \text{if $\XSet = C(D)$,} \\[1em]
  \displaystyle{ \bigg( \int_D\int_D |k(x, x')|^2 \,dx\, dx'\bigg)^{1/2}}, 
                                                          & \text{if $\XSet = L^2(D)$}. \\
  \end{cases}
\]
In passing, we note that the space $\WSet$ is separable because $C(D,L^1(D))$ and
$L^2(D\times D)$ are separable, and subspaces of separable metric spaces are
separable \cite[Problem 16G.1]{willardGeneralTopology2004}. 
We now define a linear mapping $H$ which associates a kernel to the corresponding
integral operator. This mapping is useful in defining the random operator
$W(\omega)$ appearing  in \cref{eq:WOp} as a random variable on a suitable Banach space.

\begin{proposition}[Properties of $H$]\label{prop:HMapping}
  Let $H$ be the linear mapping associating
  a kernel $k$, to the corresponding integral operator, namely
  \begin{equation}\label{eq:HDef}
    H(k)(v) = \int_D k(\blank, x') v(x') \, dx'.
  \end{equation}
  \revision{Then $H \colon \WSet \to K(\XSet) \subset BL(\XSet)$ is continuous, where
  $K(\XSet)$ is the space of compact linear operators on $\XSet$}, and the following
  bound hold
  \begin{equation}\label{eq:TBound}
    \| H(k) \|_{BL(\XSet)} \leq \| k \|_\WSet, \qquad \text{for all $k \in \WSet$.}
  \end{equation}
  In addition, the image of $\WSet$ under $H$, that is, $H(\WSet) = \{ H(k) \in
  K(\XSet) \colon k \in \WSet \}$ is a separable subspace of $BL(\XSet)$.
\end{proposition}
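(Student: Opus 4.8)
The plan is to treat the two cases $\XSet = C(D)$ and $\XSet = L^2(D)$ in parallel, establishing first the operator bound \eqref{eq:TBound} (which simultaneously shows $H(k) \in BL(\XSet)$), then compactness, and finally deducing continuity and separability almost for free. Throughout, the compactness argument is where the real content lies; the other three claims are comparatively soft consequences of the bound.

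I would begin with the bound. For $\XSet = L^2(D)$, given $v \in L^2(D)$, a single application of the Cauchy--Schwarz inequality to the inner integral followed by Fubini yields $\| H(k) v \|_2^2 \leq \| v \|_2^2 \int_D \int_D |k(x,x')|^2 \, dx' \, dx = \| v \|_2^2 \| k \|_\WSet^2$, which is exactly \eqref{eq:TBound}. For $\XSet = C(D)$, given $v$ in the closed unit ball and $x \in D$, one estimates $|(H(k)v)(x)| \leq \| v \|_\infty \int_D |k(x,x')| \, dx' \leq \| k \|_\WSet$, again giving \eqref{eq:TBound}; but here one must additionally verify that $H(k)v$ really lies in $C(D)$, and this is precisely where the defining condition $\lim_{h \to 0}\nu(h;k) = 0$ is used: for $\| x - z \|_2 \leq h$ one has $|(H(k)v)(x) - (H(k)v)(z)| \leq \| v \|_\infty\, \nu(h;k)$, which tends to $0$ as $h \to 0$, so $H(k)v$ is continuous.

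The heart of the proof is compactness, i.e.\ that $H(k) \in K(\XSet)$. For $\XSet = L^2(D)$ I would appeal to the standard Hilbert--Schmidt theory: approximating $k \in L^2(D \times D)$ in the $L^2$-norm by kernels of the form $\sum_i a_i(x) b_i(x')$ produces finite-rank operators converging to $H(k)$ in the Hilbert--Schmidt norm, hence in operator norm by \eqref{eq:TBound}; since $K(\XSet)$ is closed in $BL(\XSet)$, the limit $H(k)$ is compact. For $\XSet = C(D)$ the right tool is the Arzel\`a--Ascoli theorem: the image of the closed unit ball under $H(k)$ is uniformly bounded by \eqref{eq:TBound} and equicontinuous by the modulus-of-continuity estimate above, the estimate being uniform over all $v$ in the unit ball; hence this image is relatively compact in $C(D)$, so $H(k)$ is compact. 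I expect this step to be the main obstacle, and in particular the uniform role played by $\nu$ in securing equicontinuity independently of $v$ is the crux of the $C(D)$ case.

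Finally, continuity of $H$ is immediate from its linearity together with \eqref{eq:TBound}: one has $\| H(k_1) - H(k_2) \|_{BL(\XSet)} = \| H(k_1 - k_2) \|_{BL(\XSet)} \leq \| k_1 - k_2 \|_\WSet$, so $H$ is $1$-Lipschitz. Separability of $H(\WSet)$ then follows from the general fact that a continuous image of a separable space is separable: taking a countable dense subset $\{ k_n \}$ of $\WSet$ (which exists, as noted before the statement), continuity of $H$ makes $\{ H(k_n) \}$ dense in $H(\WSet)$. Since $H$ is linear, $H(\WSet)$ is a subspace of $BL(\XSet)$, and it is separable, completing the proof.
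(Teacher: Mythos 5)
Your proof is correct, and it differs from the paper's mainly in being self-contained where the paper defers to the literature. The paper's proof cites Atkinson's book for the compactness of $H(k)$ and cites a lemma of an earlier paper (together with Atkinson again) for the bound \eqref{eq:TBound}, then derives continuity of $H$ from linearity plus that bound, and proves separability of $H(\WSet)$ exactly as you do (continuous image of a separable space, with an explicit countable-dense-subset argument). You instead supply the standard proofs of the two cited facts: Cauchy--Schwarz and Fubini for the $L^2(D)$ bound, the sup-norm estimate for the $C(D)$ bound, Hilbert--Schmidt approximation by degenerate kernels for compactness on $L^2(D)$, and Arzel\`a--Ascoli for compactness on $C(D)$. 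These are precisely the arguments contained in the references the paper points to, so mathematically the routes coincide; what your version buys is transparency about where the defining condition $\lim_{h\to 0}\nu(h;k)=0$ of $\WSet$ enters in the $C(D)$ case --- it is needed twice, once to show $H(k)v$ actually lands in $C(D)$ and once to get equicontinuity of the image of the unit ball, uniformly in $v$ --- whereas the paper's version is shorter and leans on known results. Your continuity and separability steps are essentially identical to the paper's.
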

\begin{proof}
  See \hyperref[proof:HMapping]{proof}
  on page \pageref{proof:HMapping}.
\end{proof}

We can now formulate some hypotheses on the random input data.
\begin{hypothesis}[Random data]\label{hyp:randomData} It holds that:
  \begin{remunerate}
    \item \label{hyp:kernel} 
      $\omega \mapsto w(\blank,\blank,\omega)$ is a strongly $\prob_w$-measurable
      $\WSet$-valued random variable;
    \item \label{hyp:input} 
      $\omega \mapsto g(\blank,\blank,\omega)$ is a strongly $\prob_g$-measurable
      $C^0(J,\XSet)$-valued random variable;
    \item \label{hyp:initial} $\omega \mapsto v(\blank,\omega)$ is a strongly $\prob_v$-measurable
      $\XSet$-valued random variable;
    \item \label{hyp:firingRate} $\omega \mapsto f(\blank,\omega)$ is a strongly $\prob_f$-measurable
      $BC^1(\RSet)$-valued random variable.
  \end{remunerate}
  
\end{hypothesis}

We note that the hypothesis on the firing rate comes into play only in the nonlinear case. In
principle, differentiability in \cref{hyp:randomData}.\labelcref{hyp:firingRate} can
be weakened, but we will keep it here as it simplifies the analysis and it is met in
most mathematical studies on neural fields.
These hypotheses imply that realisations of the random data satisfy
requirements usually met in functional-analytic studies of deterministic neural fields
\cite{faugeras2008absolute,Potthast:2010kb,avitabileProjectionMethodsNeural2023}.
They also
guarantee the existence of certain random variables
$\kappa_w$, $\kappa_g$, $\kappa_v$ and $ \kappa_f$ which will be
useful later. They can be interpreted as the magnitude of realisations of the input
data, measured in the respective function space norm. We summarise results in the
following proposition.
\begin{proposition}[Properties of random data]\label{prop:kappaEst}
  Under \crefrange{hyp:domain}{hyp:randomData} we have 
  \begin{remunerate}
    \item \label{prop:WBound} The mapping $\omega \mapsto \kappa_w(\omega) := \| w(\blank,\blank,\omega) \|_\WSet$
      is a strongly $\prob_w$-measurable random variable. Further, the mapping 
      \[
        W \colon \Omega_w \to H(\WSet), \quad \omega \mapsto
        H(w(\blank,\blank,\omega)),
      \]
      with $H$ defined as in \cref{eq:HDef}, is a strongly $\prob_w$-measurable
      $H(\WSet)$-valued random variable, whose realisations $W(\omega)(v) = \int_D w(\blank,x',\omega) v(x')
      dx'$ satisfy, for almost all $\omega \in \Omega_w$
  \[
    \| W(\omega) \|_{BL(\XSet)} \leq  
    \kappa_w(\omega).
  \]
\item The mapping $\omega \mapsto \kappa_g(\omega) := \| g(\blank,\blank,\omega)
  \|_{C^0(J,\XSet)}$ is a strongly $\prob_g$-measurable random variable.
  For almost all $\omega \in \Omega_g$ the realisations
  $g(\blank,\omega)$ of the forcing satisfy 
\[
   \| g(t , \omega)\|_\XSet 
  \leq 
  \kappa_g(\omega), \qquad t \in J.
\]
\item
  The mapping $\omega \mapsto \kappa_v(\omega) := \| v(\blank,\omega) \|_{\XSet}$,
  is a strongly $\prob_v$-measurable random variable. 
\item For almost all $\omega \in \Omega_f$, the mapping $u \mapsto F(u,\omega)$ given
  in \cref{eq:Nemytskii} is on $\XSet$ to itself. Further, if
  $\LSet = 1$ then for any $\omega \in \Omega_f$ it holds
  \[
  | F(u,\omega)(x) | = |u(x)|, \quad
  \| F(u,\omega)\|_\XSet = \| u \|_\XSet, \quad
  x \in D, \quad u \in \XSet,
  \]
  whereas if $\LSet = 0$, then for almost every  $\omega \in \Omega_f$
  \[
  | F(u,\omega)(x) | \leq \kappa_f(\omega), 
  \qquad
  \| F(u,\omega)\|_\XSet \leq \kappa_D \kappa_f(\omega), 
  \quad x \in D, \quad u \in \XSet,
  \]
  with $\kappa_D = \max(1,\sqrt{|D|})$ and
  $\kappa_f(\omega) := \| f(\blank,\omega) \|_\infty$ a strongly
  $\prob_f$-measurable random variable. 
\item \label{prop:NemitskiProperties}
  \revision{
    If $\LSet = 0$, then for any $u \in C^0(J,\XSet)$ the mapping $\omega \mapsto \lambda(\omega)$, with
    $\lambda(\omega)(t) = F(u(t),\omega)$ is a
    strongly $\prob_f$-measurable $C^0(J,\XSet)$-valued random variable. 
  }
\item \label{prop:NProperties} For almost all $(\omega_w,\omega_f,\omega_g) \in \Omega_w \times \Omega_f
  \times \Omega_g$, the mapping $(t,u) \mapsto N(t,u,\omega_w,\allowbreak \omega_f,\omega_g)$,
  with $N$ defined in \cref{eq:NDef}, is continuous on $J \times \XSet$ to $\XSet$,
  and satisfies, 
  \begin{equation}\label{eq:LipN}
    \| N(t,u,\omega_w,\omega_f,\omega_g) - N(t,v,\omega_w,\omega_f,\omega_g)
    \|_{\XSet} \leq  \kappa_N(\omega_w,\omega_f) \| u-v \|_{\XSet}
  \end{equation}
  for all $(t,u), (t,v) \in J \times \XSet$, where
  \[
    \kappa_N(\omega_w,\omega_f) := 
    \begin{cases}
      1+ \kappa_w(\omega_w), & \text{if $\LSet = 1$,}\\
      1+ \kappa_D \kappa_w(\omega_w) \kappa_{f'}(\omega_f), & \text{if $\LSet = 0$,}
    \end{cases}
  \]
  and $\kappa_{f'}(\omega_f) := \| \partial_u f(\blank,\omega_f) \|_\infty$ are
  strongly measurable random variables. Further, the following bound holds for all
  $(t,u) \in J \times \XSet$ and almost all $\omega \in \Omega$,
  \begin{equation}\label{eq:BoundN}
    \| N(t,u,\omega_w,\omega_f,\omega_g) \|_{\XSet} \leq
    B_N(\| u \|_{\XSet},\omega_w,\omega_f,\omega_g),
  \end{equation}
  where
  \[
    B_N\bigl(\nu,\omega_w,\omega_f,\omega_g\bigr) = 
    \begin{cases}
      (1+\kappa_w(\omega_w)) \nu + \kappa_g(\omega_g),
      & \text{if $\LSet = 1$,} \\
      \nu + \kappa_w(\omega_w) \kappa_D \kappa_f(\omega_f) + \kappa_g(\omega_g), 
      & \text{if $\LSet = 0$.}
    \end{cases}
  \]
  \end{remunerate}
\end{proposition}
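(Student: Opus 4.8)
The plan is to reduce every measurability assertion to one principle: if $\Phi$ is a continuous map from a Banach space $\YSet$ into a Banach space $\YSet'$ and $\omega \mapsto X(\omega)$ is a strongly $\prob$-measurable $\YSet$-valued random variable, then $\omega \mapsto \Phi(X(\omega))$ is a strongly $\prob$-measurable $\YSet'$-valued random variable. This holds because, if $X_n = \sum_i 1_{\Omega_i} y_i$ are the $\prob$-simple functions with $X_n \to X$ almost everywhere furnished by \cref{def:strongRandomVar}, then each $\Phi(X_n) = \sum_i 1_{\Omega_i} \Phi(y_i)$ takes finitely many values and is therefore again $\prob$-simple, while $\Phi(X_n) \to \Phi(X)$ almost everywhere by continuity of $\Phi$. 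Every measurability claim below is an instance of this principle, combined with elementary norm estimates, so I treat the six items in turn.

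For the first three items I apply the principle with the relevant norm functional, which is $1$-Lipschitz and hence continuous: $\kappa_w = \| \blank \|_\WSet \circ w$, $\kappa_g = \| \blank \|_{C^0(J,\XSet)} \circ g$, and $\kappa_v = \| \blank \|_\XSet \circ v$ are strong random variables by \cref{hyp:randomData}. The operator-valued map $W$ of \cref{eq:WOp} equals $H \circ w$, the composition of the strongly $\WSet$-valued $w$ with the continuous linear map $H$ of \cref{prop:HMapping}; since that proposition also records that $H(\WSet)$ is a separable subspace of $BL(\XSet)$, the principle yields that $W$ is a strong $H(\WSet)$-valued random variable, and the realisation bound $\| W(\omega) \|_{BL(\XSet)} \le \kappa_w(\omega)$ is exactly \cref{eq:TBound}. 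The pointwise estimate $\| g(t,\omega) \|_\XSet \le \kappa_g(\omega)$ for every $t \in J$ is immediate from the definition of the supremum norm on $C^0(J,\XSet)$.

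For the Nemytskii operator $F$ of \cref{eq:Nemytskii} the linear case $\LSet = 1$ is immediate, as $F(u,\omega) = u$. In the nonlinear case $\LSet = 0$, boundedness of $f(\blank,\omega) \in BC^1(\RSet)$ gives $| F(u,\omega)(x) | = | f(u(x),\omega) | \le \| f(\blank,\omega) \|_\infty = \kappa_f(\omega)$; taking suprema or integrating over $D$ and using $\kappa_D = \max(1,\sqrt{|D|}) \ge 1$ yields $\| F(u,\omega) \|_\XSet \le \kappa_D \kappa_f(\omega)$ in both phase spaces, and continuity (resp.\ measurability and boundedness) of $f$ places $F(u,\omega)$ in $\XSet = C(D)$ (resp.\ $L^2(D)$). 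That $\kappa_f = \| \blank \|_\infty \circ f$ is a strong random variable follows from the principle, since $\| \blank \|_\infty \le \| \blank \|_{BC^1(\RSet)}$ makes the sup-norm functional continuous on $BC^1(\RSet)$. For the penultimate item, fix $u \in C^0(J,\XSet)$ and introduce the lifting $\Psi \colon BC^1(\RSet) \to C^0(J,\XSet)$, $\Psi(\phi)(t) = \phi \circ u(t)$. A bounded derivative makes each $\phi$ globally Lipschitz with constant $\| \phi' \|_\infty$, so $t \mapsto \Psi(\phi)(t)$ inherits continuity from $u \in C^0(J,\XSet)$ and $\Psi$ is well defined; the estimate $\| \Psi(\phi)(t) - \Psi(\psi)(t) \|_\XSet \le \kappa_D \| \phi - \psi \|_\infty$ (splitting the two phase spaces) shows $\Psi$ is Lipschitz, hence continuous. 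The sought random variable is then $\lambda(\omega) = \Psi\bigl( f(\blank,\omega_f) \bigr)$, strongly $C^0(J,\XSet)$-valued by the principle.

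The final item assembles these pieces from $N(t,u,\blank) = -u + W(\omega_w) F(u,\omega_f) + g(t,\omega_g)$ of \cref{eq:NDef}. The Lipschitz bound \cref{eq:LipN} follows from the triangle inequality, the operator bound $\| W(\omega_w) \|_{BL(\XSet)} \le \kappa_w(\omega_w)$, and the Lipschitz constant of $F$, which is $1$ when $\LSet = 1$ and $\kappa_{f'}(\omega_f) = \| \partial_u f(\blank,\omega_f) \|_\infty$ when $\LSet = 0$ by a pointwise mean value estimate; the factor $\kappa_D$ in the stated constant for $\LSet = 0$ is a harmless overestimate since $\kappa_D \ge 1$. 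The growth bound \cref{eq:BoundN} follows identically, now using $\| F(u,\omega) \|_\XSet = \| u \|_\XSet$ (resp.\ $\le \kappa_D \kappa_f(\omega_f)$) and $\| g(t,\omega_g) \|_\XSet \le \kappa_g(\omega_g)$. Joint continuity of $(t,u) \mapsto N$ holds because $-u + W(\omega_w) F(u,\omega_f)$ is Lipschitz in $u$ uniformly in $t$, while $t \mapsto g(t,\omega_g)$ is continuous by \cref{hyp:randomData}. Finally $\kappa_{f'}$ is strongly measurable by the principle, since differentiation $\phi \mapsto \phi'$ is $1$-Lipschitz from $BC^1(\RSet)$ into $BC(\RSet)$ and is followed by the sup norm, and $\kappa_N$ is a continuous sum-and-product combination of the strong random variables $\kappa_w$ and $\kappa_{f'}$. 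I expect the only genuine obstacle to be the nonlinear part of the penultimate item: showing that the lifting $\Psi$ lands in $C^0(J,\XSet)$ and is continuous hinges on the global Lipschitz continuity of the realisations of $f$ together with a phase-space-dependent norm estimate, whereas all remaining steps reduce mechanically to the composition principle.
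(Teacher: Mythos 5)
Your proposal is correct and takes essentially the same route as the paper: every measurability claim is reduced to the principle that continuous maps send strongly measurable functions to strongly measurable functions (the paper invokes this via \cite[Corollary 1.1.24]{hytonenAnalysisBanachSpaces2016a}, which is exactly your composition principle), combined with the bound of \cref{prop:HMapping} and elementary pointwise norm estimates. Your lifting $\Psi \colon BC^1(\RSet) \to C^0(J,\XSet)$ in item 5 is a clean repackaging of the paper's explicit argument, which pushes simple approximations $f_k$ of $f$ through the Nemytskii map and relies on the same Lipschitz estimate $\| \lambda(\omega) - \lambda_k(\omega) \|_{C^0(J,\XSet)} \leq \kappa_D \| f(\blank,\omega) - f_k(\blank,\omega) \|_{BC(\RSet)}$, and your mean-value derivation of \cref{eq:LipN} (with the correct remark that the factor $\kappa_D$ is a harmless overestimate) is the content of the external lemma the paper cites for the nonlinear case.
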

\begin{proof}
  See \hyperref[proof:kappaEst] on page \pageref{proof:kappaEst}.
\end{proof}

\section{Well-posedness and regularity of the solution} 
\label{sec:uniqueness}
We now proceed to discuss the existence and uniqueness of a solution to
\cref{prob:NRNF} (and hence to \cref{eq:nonlinRNF,eq:linRNF}). This result follows
an argument similar to the Picard--Lindel\"of Theorem \cite[Theorem
5.2.4]{atkinson2005theoretical} for the deterministic setup
\cite{faugeras2008absolute,Potthast:2009bd,avitabileProjectionMethodsNeural2023}; in the random case,
however, the fixed point argument must be reworked explicitly as one has to ensure
measurability of the solution.

In addition, the linear and nonlinear problem display different bounds for the solution.
In the linear case we obtain an exponential time growth, derived by combining a
variation of constants formula with Gr\"onwall inequality. In the nonlinear case it
is possible to find bounds that are homogeneous in time: we proceed by majorising the
nonlinear and forcing terms, adapting an argument
proposed for $\XSet = C(D)$ by
Potthast and beim Graben \cite{Potthast:2010kb} (see also \cite[Lemma
7.1.3]{NakamuraInverseMod}) so as to
make it valid also when $\XSet = L^2(D)$. Note also that our hypotheses on the kernel
$w$ differ from the ones in \cite{Potthast:2010kb,NakamuraInverseMod}). 

Since we adapt to the random setup a classical fixed-point argument for the
existence and uniqueness of Cauchy problems on Banach spaces, we begin by collecting
a few properties of a Volterra integral operator instrumental to the proof.

\begin{theorem}[Volterra integral operator]\label{thm:volterra}
  Assume \crefrange{hyp:domain}{hyp:randomData}. The mapping
  $\phi \colon C^0(J,\XSet) \times \Omega \to C^0(J,\XSet)$
  defined by
  \[
    \phi(u,\omega)(t) = v(\omega_v) + \int_0^t N(s,u(s),\omega_w,\omega_f,\omega_g)\, ds, \quad \quad t\in J,
  \]
  where $N$ is defined in \cref{eq:NDef}, and where the integral is interpreted as an
  $\XSet$-valued Riemann integral, enjoys the following properties:
  \begin{remunerate}
    \item \label{prop:phi_u_cont}  For almost all $\omega \in \Omega$, the map $u
      \mapsto \phi(u,\omega)$ is continuous on $C^0(J,\XSet)$ to $C^r(J,\XSet)$, for
      $r \in \{ 0,1 \}$.
    \item \label{prop:phi_rand_1} If $u \in C^0(J,\XSet)$ then $\omega \mapsto \phi(u,\omega)$
      is a strongly $\prob$-measurable $C^r(J,\XSet)$-valued random variable for $r
      \in \{ 0,1 \}$. 
    \item \label{prop:phi_rand_2} If $u$ is a strongly $\prob$-measurable
      $C^r(J,\XSet)$-valued random variable for some $r \in \{ 0,1 \}$ then
      so is $\omega \mapsto \phi(u(\omega),\omega)$. 
    \item \label{prop:volterra_sol} For almost all $\omega \in \Omega$, a mapping $t
      \mapsto u(t,\omega) \in
      C^0(J,\XSet)$ satisfies \cref{eq:nonlinRNF} if, and only if, it is a fixed
      point of $u \mapsto \phi(u,\omega)$.
  \end{remunerate}
  \begin{proof}[Proof of property 1 in \cref{thm:volterra}]
    Throughout the proof of \cref{thm:volterra} we set $\YSet_r = C^r(J,\XSet)$, for $r \in \{
    0,1 \}$. For almost all $\omega \in \Omega$, the mapping $u \mapsto
    \phi(u,\omega)$ is well-defined on $\YSet_0$ to $\YSet_0$, and on $\YSet_0$ to
    $\YSet_1$. 
    To see this fix 
    $u \in \YSet_0$, and set
    $y(\omega)(t) = \phi(u,\omega)(t)$. 
    \revision{From \cref{prop:kappaEst}, we deduce the existence of a set
      $\mathcal{A} \in \mathcal{F}$ such that the mapping $t \mapsto
      y(\omega)$ is continuous on $J$ to $\XSet$ for all $\omega \in \Omega \setminus
    \mathcal{A}$, where $\prob(\mathcal{A})= 0$.}
    Using \cref{eq:BoundN} and the definition of
    $\phi$ we have:
    \[
      \| y(\omega) \|_{\YSet_0} \leq \|  v(\omega_v) \|_{\XSet} + TB_N(\| u \|_{\YSet_0},\omega_w,\omega_f,\omega_g) < \infty 
    \]
    Further, the mapping $t \to y(\omega)(t)$ is differentiable on $J$ to $\XSet$, and
    \[
      \| y(\omega) \|_{\YSet_1} = 
      \| y(\omega) \|_{\YSet_0} + \| N(\blank,u,\omega) \|_{\YSet_0} 
      \leq 
      \| y(\omega) \|_{\YSet_0} + B_N(\| u \|_{\YSet_0},\omega_w,\omega_f,\omega_g) < \infty.
    \]
  To prove continuity of the mapping $u \mapsto \phi(u,\omega)$ on $\YSet_0$ to
    $\YSet_0$ and on $\YSet_0$ to $\YSet_1$ for almost all $\omega \in \Omega$,
    consider a sequence $\{ u_n \}_{n\in\NSet}$ converging to $u$ in
    $(\YSet_0, \| \blank \|_{\YSet_0})$. Using the $C^r(J,\XSet)$ norm definition and
    \cref{eq:LipN} we find for almost all $\omega \in \Omega$
    \[
      \begin{aligned}
        \| \phi(u_n,\omega) - \phi(u,\omega) \|_{\YSet_0} 
      & = \sup_{t \in J} \| \phi(u_n,\omega)(t) - \phi(u,\omega)(t) \|_\XSet \\
      & \leq T \kappa_N(\omega_w,\omega_f) \| u_n - u \|_{\YSet_0}  
    \to 0 \quad \text{ as $n \to \infty$},
      \end{aligned}
    \]
    and
    \[
      \begin{aligned}
        \| \phi(u_n,\omega) - \phi(u,\omega) \|_{\YSet_1} 
      & = \| \phi(u_n,\omega) - \phi(u,\omega) \|_{\YSet_0}
        + \sup_{t \in J} \| N(t,u_n(t),\omega) - N(t,u(t),\omega) \|_\XSet 
      \\
      & \leq  (1+T) \kappa_N(\omega_w,\omega_f) \| u_n - u \|_{\YSet_0}  
    \to 0 \quad \text{ as $n \to \infty$}.
      \end{aligned}
    \]
  \end{proof}
  \begin{proof}[Proof of property 2 in \cref{thm:volterra}]
    \revision{
      We prove the statement for $\LSet = 0$ using the auxiliary mapping
      \begin{equation}\label{eq:PsiAux}
        \begin{aligned}
          \psi \colon  \YSet_0 \times \XSet \times K(\XSet) \times \YSet_0 \times \YSet_0 
                & \to \YSet_0 \\
          (u,v,A,\lambda,\gamma) & \mapsto v + \int_{0}^{t} -u(s) + A \lambda(s) + \gamma(s) \,ds
        \end{aligned}
      \end{equation}
      and the fact that continuous transformations of strongly-measurable
      functions are strongly measurable~\cite[Corollary
      1.13]{vanneervenStochasticEvolutionEquations2008}.
      The mapping $\psi$ is clearly well-defined on
      $\YSet_0 \times \XSet \times K(\XSet) \times \YSet_0 \times \YSet_0$ to
      $\YSet_r$. It is also continuous because if $(\bar u, \bar v, \bar A, \bar
      \lambda,
      \bar \gamma) \to (u, v, A, \lambda, \gamma)$ then
      \[
        \begin{aligned}
        \| \psi(\bar u, \bar v, \bar A, \bar \lambda, \bar \gamma) 
          - \psi(u, v,  A,  \lambda, \gamma) \|_{\YSet_0} 
          &\leq 
          \begin{aligned}[t]
           \| \bar v - v \|_{\XSet_0} 
          &+ T \bigl(  \| \bar u - u \|_{\YSet_0}
          + \| \bar \gamma - \gamma \|_{\YSet_0} \\
          &  \| \bar \lambda \|_{\YSet_0} \| \bar A - A \|_{BL(\XSet)}
          + \| A \|_{BL(\XSet)} \| \bar \lambda - \lambda \|_{\YSet_0} \bigr) \\
          \end{aligned}
          \\
          & \to 0, 
        \end{aligned}
      \]
      and
      \[
        \begin{aligned}
        \| \psi(\bar u, \bar v, \bar A, \bar \lambda, \bar \gamma) 
          - \psi(u, v,  A,  \lambda, \gamma) \|_{\YSet_1} 
          &\leq 
          \begin{aligned}[t]
           \| \bar v - v \|_{\XSet_0} 
          &+ 2T \bigl(  \| \bar u - u \|_{\YSet_0}
          + \| \bar \gamma - \gamma \|_{\YSet_0} \\
          &  \| \bar \lambda \|_{\YSet_0} \| \bar A - A \|_{BL(\XSet)}
          + \| A \|_{BL(\XSet)} \| \bar \lambda - \lambda \|_{\YSet_0} \bigr) \\
          \end{aligned}
          \\
          & \to 0. 
        \end{aligned}
      \]
    }
    
    \revision{
      We now fix fix $u \in \YSet_0$ and let $z(\omega) = \phi(u,\omega)$. To prove
      the statement we show that $z$ is a strongly $\prob$-measurable
      $\YSet_r$-valued random variable. We have
      \[
        z(\omega) = \psi(u,v(\omega_v), 
        A(\omega_w),
        \lambda(\omega_f),
        g(\blank,\omega_g)),
        \quad 
        A(\omega_w) = H(w(\omega_w)), 
        \quad 
        \lambda(\omega_f)(t) = F(u(t),\omega_f),
      \]
      where $H$ is the operator defined in \cref{prop:HMapping}, and we reason as
      follows:
    \begin{remunerate}
       \item By \cref{hyp:randomData} $v(\omega_v)$ is strongly $\prob_v$-measurable
         and $\XSet$-valued.
       \item By \cref{hyp:randomData} $w(\omega_w)$ is strongly $\prob_w$-measurable
         and $\WSet$-valued, and by \cref{prop:HMapping} the mapping $H \colon \WSet
         \to K(\XSet)$ is continuous, hence by \cite[Corollary 1.13]{vanneervenStochasticEvolutionEquations2008}
        $A(\omega_w)$ is strongly $\prob_w$-measurable and $K(\XSet)$-valued.
      \item By \cref{prop:kappaEst}.\labelcref{prop:NemitskiProperties}
        $\lambda(\omega_f)$ is strongly $\prob_f$-measurable and $\YSet_0$-valued.
      \item By \cref{hyp:randomData} $g(\omega_g)$ is strongly $\prob_g$ measurable $\YSet_0$-valued.
      \item Recalling $\prob = \prob_v \prob_w \prob_f \prob_g$ and setting 
        $\BSet = \XSet \times K(\XSet) \times \YSet_0 \times \YSet_0$ we conclude that 
        \[
          \rho(\omega) = \bigl( v(\omega_v), A(\omega_w), \lambda(\omega_f),g(\blank,\omega_g)\bigr),
        \]
        is strongly $\prob$-measurable and $\BSet$-valued.
      \item The mapping $z$ is thus composition of the strongly $\prob$-measurable
        $\BSet$-valued random variable $\rho$ and the mapping $\rho \mapsto
        \psi(u,\rho)$, which is continuous on $\BSet$ to $\YSet_r$. By
        \cite[Corollary 1.13]{vanneervenStochasticEvolutionEquations2008} $z$ is a
        strongly $\prob$-measurable and $\YSet_r$-valued random variable.
    \end{remunerate}
    The proof for $\LSet = 0$ is now complete. The proof for the linear case $\LSet =
    1$ follows similar steps, upon setting $z(\omega) = \psi(u,v(\omega_v),
    A(\omega_w), u, g(\blank,\omega_g))$.
  }
  \end{proof}

    \begin{proof}[Proof of Property 3 in \cref{thm:volterra}] Fix $r \in \{ 0,1 \}$,
      and let $u$ be a strongly $\prob$-measurable
      $\YSet_r$-valued random variable, and set $y(\omega)=
      \phi(u(\omega),\omega)$, 
      We show that $y$ is a
    $\prob$-measurable $\YSet_r$-valued random variable which, owing to the separability of
    $\YSet_r$, is equivalent to $y$ being a strongly $\prob$-measurable $\YSet_r$-valued random
    variable. We adapt \cite[Theorem 2.14]{bharucha-reidRandomIntegralEquations} to
    the case of a nonlinear random operator.

    Let $\{u_n\}$ be a sequence of $\prob$-simple $\YSet_r$-valued random variables of
    the form $u_n(\omega) = \sum_{m=1}^{s_n} 1_{A_{nm}}(\omega) \xi_{nm}$, with
    $A_{nm} \in \calF$ and $\xi_{nm} \in \YSet_r$ for all $m \in \NSet_{s_n}$ and $n \in
    \NSet$, satisfying $\| u_n(\omega)-u(\omega) \|_{\YSet_r} \to 0$ as $n \to \infty$ for
    almost all $\omega \in
    \Omega$, and let $y_n(\omega) = \phi(u_n(\omega),\omega)$ for all $n \in \NSet$.
    For any $B \in \calB(\YSet_r)$ it holds
    \revision{
    \[
      \begin{aligned}
        \{\omega \colon y_n(\omega) \in B \} 
       & =
        \bigcup_{m =1}^{s_n}
        \{ \omega \colon \phi(\xi_{nm},\omega) \in B \}
        \cap
        \{ \omega \colon u_n(\omega) = \xi_{nm} \} \\
       & =
        \bigcup_{m = 1}^{s_n}
        \{ \omega \colon \phi(\xi_{nm},\omega) \in B \}
        \cap A_{nm}
        \in \calF.
      \end{aligned}
    \]
    }
    In the last passage, we have used the fact that, by Property 2, $\omega \mapsto
    \phi(\xi_{nm},\omega)$ is a 
    \revision{
    strongly $\prob$-measurable 
    $\YSet_r$-valued 
    (hence measurable and $\YSet_r$-valued) random variable.
    We conclude that for any $n$, $y_{n}(\omega)$ is a
    $\prob$-measurable $\YSet_r$-valued (and hence strongly $\prob$-measurable
    $\YSet_r$-valued) 
  random variable.}
    Using the continuity of $\phi$ proved in Property 1, we obtain
    \[
      \lim_{n \to \infty}y_n(\omega) = \lim_{n \to \infty}\phi(u_n(\omega),\omega) =
      \phi(u(\omega),\omega) = y(\omega),
      \qquad
      \text{for almost all $\omega \in \Omega$.}
    \]
    We conclude that $y$ is the $\prob$-almost everywhere limit of a sequence of strongly
    $\prob$-measurable functions, hence by \cite[Definition
    1.1.14]{hytonenAnalysisBanachSpaces2016a} $y$ is strongly $\prob$-measurable.
  \end{proof}

  \begin{proof}[Proof of Property 4 in \cref{thm:volterra}] For almost all $\omega \in \Omega$, the
      operator $u \mapsto \phi(u,\omega)$ satisfies the hypotheses of \cite[Lemma
      2.14 (see also Remark 2.16)]{vanneervenFunctionalAnalysis2022}, and the
      statement follows directly from this result.
  \end{proof}
\end{theorem}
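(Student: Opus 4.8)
The plan is to establish the four properties in sequence, exploiting that $\phi$ is affine in $u$ with coefficients built from the random data, and leaning throughout on the boundedness, continuity and Lipschitz estimates for $N$ collected in \cref{prop:kappaEst}; I write $\YSet_r = C^r(J,\XSet)$. For Property 1 I would first check well-definedness: fixing $u \in \YSet_0$ and almost every $\omega$, the integrand $s \mapsto N(s,u(s),\omega)$ is continuous on $J$ to $\XSet$ by \cref{prop:kappaEst}.\labelcref{prop:NProperties}, so the $\XSet$-valued Riemann integral exists and is continuously differentiable in $t$ with derivative $N(t,u(t),\omega)$, and the bound \cref{eq:BoundN} controls both the $\YSet_0$ and $\YSet_1$ norms of $\phi(u,\omega)$. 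Continuity in $u$ is then a direct consequence of the Lipschitz estimate \cref{eq:LipN}: for $u_n \to u$ in $\YSet_0$, integrating the pointwise bound gives $\|\phi(u_n,\omega)-\phi(u,\omega)\|_{\YSet_0} \leq T\kappa_N \|u_n-u\|_{\YSet_0}$, and adding $\sup_{t \in J}\|N(t,u_n(t),\omega)-N(t,u(t),\omega)\|_\XSet$ upgrades this to the $\YSet_1$ norm.

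For Properties 2 and 3 the idea is to factor $\phi$ through its random coefficients. I would introduce an auxiliary assembly map $\psi$ sending a tuple (initial datum, kernel operator, firing-rate output, forcing) to the corresponding Volterra integral, and verify that $\psi$ is jointly continuous on the product of the relevant Banach spaces into $\YSet_r$. For fixed $u$ (Property 2) the coefficient tuple $\omega \mapsto \bigl(v(\omega_v),\,H(w(\omega_w)),\,F(u(\cdot),\omega_f),\,g(\cdot,\omega_g)\bigr)$ is strongly $\prob$-measurable by \cref{hyp:randomData}, the continuity of $H$ from \cref{prop:HMapping}, and \cref{prop:kappaEst}.\labelcref{prop:NemitskiProperties}, together with independence of the factors; composing with the continuous $\psi$ and using that continuous images of strongly-measurable functions are strongly measurable yields the claim (in the linear case the firing-rate factor is replaced by the deterministic $u$ and the argument only simplifies). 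Property 3 is the genuinely delicate step, since the diagonal $\omega \mapsto \phi(u(\omega),\omega)$ forbids a single application of continuity. Here I would approximate the random $u$ by $\prob$-simple $\YSet_r$-valued random variables $u_n = \sum_m 1_{A_{nm}}\xi_{nm}$: on each atom the map collapses to $\phi(\xi_{nm},\omega)$, so the decomposition $\{\omega : \phi(u_n(\omega),\omega)\in B\} = \bigcup_m \bigl(\{\omega : \phi(\xi_{nm},\omega)\in B\}\cap A_{nm}\bigr)$ lies in $\calF$ by Property 2; continuity from Property 1 then passes the limit $u_n \to u$ through $\phi$, and an almost-everywhere limit of strongly-measurable functions is strongly measurable. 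The separability of $\YSet_r$ lets me identify measurability with strong measurability throughout, as in \cref{rem:strongVsWeak}.

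Property 4 is the classical equivalence between the integral and differential forms of a Banach-space ODE: for almost every fixed $\omega$ the vector field $N(\cdot,\cdot,\omega)$ is continuous on $J \times \XSet$, so the fundamental theorem of calculus for $\XSet$-valued functions shows that $t \mapsto u(t,\omega)$ solves \cref{eq:nonlinRNF} exactly when it is a fixed point of $\phi(\cdot,\omega)$, and I would simply cite the corresponding deterministic lemma. I expect Property 3 to be the main obstacle: the care lies in justifying that the set-theoretic decomposition above genuinely belongs to $\calF$ --- which is precisely where Property 2 and the separability of $\YSet_r$ are indispensable --- and in confirming that the simple-function approximants remain $\YSet_r$-valued, so that the continuity of $\phi$ may be applied to pass to the limit.
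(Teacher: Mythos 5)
Your proposal is correct and follows essentially the same route as the paper's own proof: the well-definedness and Lipschitz-based continuity for Property 1, the factorisation of $\phi$ through a continuous assembly map $\psi$ applied to the strongly measurable coefficient tuple $\bigl(v(\omega_v), H(w(\omega_w)), F(u(\cdot),\omega_f), g(\cdot,\omega_g)\bigr)$ for Property 2, the simple-function approximation with the atom-wise decomposition and limit argument for Property 3, and the citation of the deterministic fixed-point equivalence for Property 4 all match the paper's argument. No gaps to report.
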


We now proceed to study the well-posedness of \cref{prob:NRNF}. For almost all
$\omega \in \Omega$, the hypotheses of \cite[Theorem 5.2.4]{atkinson2005theoretical}
are satisfied, hence the existence and
uniqueness of a solution $t \mapsto u(t , \omega)$ to \cref{eq:NRNFOp} in $C^1(J,\XSet)$ is
guaranteed $\prob$-almost surely. This path-wise result, however, is not sufficient
to conclude well-posedness in the sense specified by \cref{prob:NRNF}, which also
requires measurability of the random field $u$. We circumvent this problem by
changing intrusively the Picard--Lindel\"of fixed point argument appearing in
deterministic problems, along the lines of
what is done, for instance, in \cite[Theorem 6.7]{bharucha-reidRandomIntegralEquations}. We do this in the theorem
below, where we also present estimates specific to the linear and nonlinear cases.

\begin{theorem}[Solution to neural field problem with random data]\label{thm:existenceRealisationLNF}
  Under \crefrange{hyp:domain}{hyp:randomData}, there exists
  a $\prob$-almost unique strongly
  $\prob$-measurable random variable $u$ with values in $C^r(J,\XSet)$, for $r \in
  \{ 0,1 \}$, solving \cref{eq:NRNFOp} $\prob$-almost
  surely. In addition, there exist
  strong random variables $M$, $M_0$, and $M_1$, such that $\prob$-almost surely
   \begin{align}
    & \| u(t , \omega) \|_\XSet \leq 
    M(\omega), \qquad t \in J \label{eq:UBound}\\
    & \| u(\blank , \omega ) \|_{C^0(J,\XSet)} \leq 
    M_0(\omega),\label{eq:UC0Bound}\\
    & \| u(\blank , \omega) \|_{C^1(J,\XSet)} \leq 
    M_1(\omega). \label{eq:UC1Bound}
   \end{align}
  In the linear model, $\LSet =1$, the variables $M$, $M_0$, $M_1$ depend on $t$
  or $T$,
  \[
    \begin{aligned}
      & M(\omega) = 
	\big( \kappa_v(\omega_v)
	+ \kappa_g(\omega_g)t\big)\exp\big(\kappa_w(\omega_w)t\big),  \\
      & M_0(\omega) = 
	\big( \kappa_v(\omega_v)
	+ \kappa_g(\omega_g)T\big)\exp\big(\kappa_w(\omega_w)T\big),  \\
      & M_1(\omega) =
      \kappa_g(\omega_g) + (2+\kappa_w(\omega_w)) M_0(\omega),
    \end{aligned}
  \]
  whereas in the nonlinear model, $\LSet =0$, they are time independent,
  \[
    \begin{aligned}
     & M(\omega) = M_0(\omega), \\
     & M_0(\omega) = 2\max\big[ \kappa_v(\omega_v) \,, \, \kappa_g(\omega_g) + 
     \kappa_D \kappa_w(\omega_w)\kappa_f(\omega_f)\big]\\
     & M_1(\omega) = 2M_0(\omega) +   \kappa_g(\omega_g) + \kappa_D 
       \kappa_w(\omega_w)\kappa_f(\omega_f) , 
    \end{aligned}
  \]
   with $\kappa_D$, $\kappa_w$, $\kappa_f$ $\kappa_g$, and
 $\kappa_v$ defined as in \cref{prop:kappaEst}.
\end{theorem}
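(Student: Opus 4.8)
The plan is to run a Picard--Lindel\"of fixed-point argument realisation-by-realisation, to promote it to a measurability statement by constructing the fixed point as an almost-sure limit of strongly measurable Picard iterates via \cref{thm:volterra}, and finally to read off the bounds from the integral equation. For the path-wise statement, observe that by \cref{prop:kappaEst}.\labelcref{prop:NProperties} the vector field $N(\blank,\blank,\omega)$ is, for almost every $\omega$, continuous on $J\times\XSet$ and globally Lipschitz in its state argument with constant $\kappa_N(\omega_w,\omega_f)$. The hypotheses of the deterministic Picard--Lindel\"of theorem \cite[Theorem 5.2.4]{atkinson2005theoretical} are therefore met, so for almost every $\omega$ there is a unique $t\mapsto u(t,\omega)\in C^1(J,\XSet)$, which by \cref{thm:volterra}.\labelcref{prop:volterra_sol} is exactly the unique fixed point of $u\mapsto\phi(u,\omega)$. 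This already yields existence, uniqueness, and $\prob$-almost uniqueness at the level of single realisations.

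The main obstacle is to upgrade this path-wise result to strong measurability of $\omega\mapsto u(\blank,\omega)$, since the deterministic argument gives no control on the $\omega$-dependence of the fixed point. I would fix the starting iterate $u_0(\omega)=v(\omega_v)$, viewed as a function constant in $t$, which is a strong $C^r(J,\XSet)$-valued random variable by \cref{hyp:randomData}.\labelcref{hyp:initial} and the continuity of the constant embedding $\XSet\hookrightarrow C^r(J,\XSet)$, and then set $u_{n+1}(\omega)=\phi(u_n(\omega),\omega)$. An induction based on \cref{thm:volterra}.\labelcref{prop:phi_rand_2} shows that every $u_n$ is again a strong $C^r(J,\XSet)$-valued random variable. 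For almost every fixed $\omega$, the Picard iteration underlying \cite[Theorem 5.2.4]{atkinson2005theoretical} (made contractive either through a Bielecki-type weighted norm, or by passing to a power $\phi^k$ whose Lipschitz constant is $(\kappa_N(\omega_w,\omega_f)T)^k/k!$) converges to the fixed point in $C^0(J,\XSet)$, hence in $C^1(J,\XSet)$ by the continuity of $\phi$ into $C^1$ from \cref{thm:volterra}.\labelcref{prop:phi_u_cont}. Thus $u(\blank,\omega)=\lim_n u_n(\omega)$ almost surely, and being the $\prob$-a.e.\ pointwise limit of strong random variables, $u$ is itself a strong $C^1(J,\XSet)$-valued random variable by \cref{def:strongRandomVar}; the $C^0(J,\XSet)$ statement follows from the continuous embedding.

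For the quantitative bounds I would work from the integral identity $u(t,\omega)=v(\omega_v)+\int_0^t N(s,u(s,\omega),\omega)\,ds$. In the linear case $\LSet=1$, applying the variation-of-constants formula to $u'+u=W(\omega_w)u+g$ and using $e^{-(t-s)}\le1$ together with $\|W(\omega_w)\|_{BL(\XSet)}\le\kappa_w(\omega_w)$ gives $\|u(t,\omega)\|_\XSet\le\kappa_v(\omega_v)+\kappa_g(\omega_g)t+\kappa_w(\omega_w)\int_0^t\|u(s,\omega)\|_\XSet\,ds$; Gr\"onwall's inequality with the nondecreasing prefactor $\kappa_v(\omega_v)+\kappa_g(\omega_g)t$ then yields \cref{eq:UBound} with $M(\omega)=(\kappa_v(\omega_v)+\kappa_g(\omega_g)t)\exp(\kappa_w(\omega_w)t)$, and \cref{eq:UC0Bound} follows by monotonicity in $t$. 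Substituting $M_0$ into $u'=-u+W(\omega_w)u+g$ and bounding $\|u'\|_\XSet\le(1+\kappa_w(\omega_w))M_0(\omega)+\kappa_g(\omega_g)$ produces $M_1$.

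In the nonlinear case $\LSet=0$, the boundedness of $f$ from \cref{prop:kappaEst} makes $\|W(\omega_w)F(u,\omega_f)\|_\XSet\le\kappa_D\kappa_w(\omega_w)\kappa_f(\omega_f)$ uniformly in $u$, and combining this majorisation of the nonlinear and forcing terms with the linear damping $-u$ (as in the argument of Potthast and beim Graben \cite{Potthast:2010kb}, here adapted to cover also $\XSet=L^2(D)$) yields a convex combination of $\kappa_v(\omega_v)$ and $\kappa_g(\omega_g)+\kappa_D\kappa_w(\omega_w)\kappa_f(\omega_f)$, and hence the time-homogeneous $M_0(\omega)$, the factor $2$ being the slack of the invariant-ball fixed-point construction; $M_1$ is then obtained from the differential equation as above. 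Finally, $M$, $M_0$ and $M_1$ are strong random variables, since they are continuous combinations — sums, products, maxima and exponentials — of the strong random variables $\kappa_v,\kappa_g,\kappa_w,\kappa_f$ furnished by \cref{prop:kappaEst}.
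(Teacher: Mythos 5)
Your proposal is correct and takes essentially the same approach as the paper: Picard iterates started at the initial datum $v(\omega_v)$, strong measurability of each iterate by induction via Property 3 of \cref{thm:volterra}, strong measurability of the almost-sure limit, and the identical variation-of-constants/Gr\"onwall argument (linear case) and majorisation-with-integrating-factors argument (nonlinear case) for $M$, $M_0$, $M_1$. The only cosmetic deviations are the base case (you use the continuous constant embedding $\XSet \hookrightarrow C^r(J,\XSet)$, while the paper builds the approximating simple functions explicitly), the $C^1$-convergence (you use continuity of $\phi$ into $C^1(J,\XSet)$, while the paper estimates $\| y_n - u \|_{C^1(J,\XSet)}$ directly), and your attribution of the factor $2$ in the nonlinear $M_0$ to an invariant-ball construction, whereas in the paper it arises merely from bounding $\kappa_v + \kappa \le 2\max(\kappa_v,\kappa)$.
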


\begin{proof} 
  Fix $r \in \{ 0,1 \}$, let $\YSet_r = C^r(J,\XSet)$, and construct the sequence 
  \begin{equation}\label{eq:ynDef}
    y_0(\omega) = v(\omega), \qquad y_{n+1}(\omega) = \phi(y_{n}(\omega),\omega),
    \qquad n  \geq 0,
  \end{equation}
  where the first term is understood as an equality in $\YSet_r$, hence $y_0(\omega)(t)
  = v(\omega)$ for all $(t,\omega) \in J \times \Omega$. We subdivide the proof in
  several steps.

  \textit{Step 1: $ \omega \mapsto y_n(\omega)$ is a strongly $\prob$-measurable
    $\YSet_r$-valued random variable for all $n$.} This is provable by induction. Once
    the base case is proven, 
    the induction step follows from the Property 3 of \cref{thm:volterra}.

    By \cref{hyp:randomData}.\labelcref{hyp:initial} $v(\omega)$ is the pointwise
    limit in $\XSet$ of a sequence $\{ v_m \}_m$ of the form
    \[
      v_m(\omega_v)(x) = \sum_{i=1}^{N_m} 1_{A^v_{mi}}(\omega_v) \xi^v_{mi},
      \qquad
      A^v_{mi} \in \calF_v,
      \qquad
      \xi^v_{mi} \in \XSet.
    \]
    We construct the $\prob$-simple $\YSet_r$-valued sequence $\{y_{0m}\}_m$ given by
    \[
      y_{0m}(\omega)(t) = \sum_{i=1}^{N_m} 1_{A_{mi}}(\omega_v) \xi_{mi}(t)
    \]
    with
    \[
       A_{mi} = \Omega_w \times \Omega_f \times \Omega_g \times A^v_{mi} \in \Omega,
       \qquad
       \xi_{mi}(t) \equiv \xi^v_{mi}, \qquad \xi_{mi} \in \YSet_r,
    \]
    and since for almost all $\omega \in \Omega$
  \[
    \| y_{0m}(\omega)- y_0(\omega) \|_{\YSet_r} = \| v_m(\omega_v) - v(\omega_v)\|_\XSet \to 0
    \qquad
    \text{as $m \to \infty$,}
  \]
  we conclude that $y_0(\omega)$ is $\prob$-almost everywhere the pointwise limit of
  a sequence of $\prob$-simple $\YSet_r$-valued functions, and hence is strongly
  $\prob$-measurable and $\YSet_r$-valued \cite[Definition 1.1.14]{hytonenAnalysisBanachSpaces2016a}.
  \Cref{eq:ynDef}, Property \labelcref{prop:phi_rand_2} of \cref{thm:volterra}, and
  mathematical induction ensure that $y_n(\omega)$ is a strongly $\prob$-measurable
  $\YSet_r$-valued random variable for any $n \in \NSet$.

  \textit{Step 2: existence of a fixed point $u(\omega)$ of $\phi(\blank,\omega)$,
    where $\phi \colon \YSet_0 \times \Omega \to \YSet_0$.}
  Using Property \labelcref{prop:NProperties} of \cref{prop:kappaEst} we estimate
  \[
    \| N(t,v(\omega_v),\omega_w,\omega_f,\omega_g) \|_{\XSet} 
    \leq 
    B_N(||v(\omega_v)||_{\XSet},\omega_w,\omega_f,\omega_g) =: B(\omega).
  \]
  Hence for all $t \in J$ and almost all $\omega \in \Omega$, always by Property \labelcref{prop:NProperties}, it holds
  \begin{align*}
    \| y_{1}(\omega)(t) - y_0(\omega)(t)\|_\XSet 
    & \leq  \int_{0}^{t} \| N(s,v(\omega_v),\omega_{w},\omega_f,\omega_g) \|_{\XSet}\,d s \leq tB(\omega), \\
    \| y_{2}(\omega)(t) - y_1(\omega)(t)\|_\XSet 
    & \leq \begin{aligned}[t]
    \int_{0}^{t} \big\| 
        N(s,y_{1}(\omega)(s),&\omega_w,\omega_f,\omega_g) \\
                             &- N(s,y_{0}(\omega)(s),\omega_w,\omega_f,\omega_g) \big\|_\XSet   \,d s 
    \end{aligned} \\
     & \leq \kappa_N(\omega) B(\omega) \int_{0}^{t}s \,ds = \frac{t^2}{2} \kappa_N(\omega) B(\omega),\\
  \end{align*}
  and by induction we find
  \[
    \| y_{n}(\omega) - y_{n-1}(\omega) \|_{\YSet_0} \leq
    B(\omega) \frac{ \kappa_N(\omega)^{n-1} T^{n}}{n!} \to
    0 \qquad \text{as $n \to \infty$,}
  \]
  hence $\{y_n(\omega)\}_n$ converges $\prob$-almost surely to a limit $u(\omega) \in
  \YSet_0$. Using the continuity of $\phi$, established in Property
  \labelcref{prop:phi_u_cont}
  of \cref{thm:volterra}, we have
  \[
    u(\omega) = \lim_{n \to \infty}y_{n+1}(\omega) = \lim_{n \to
    \infty}\phi(y_n(\omega),\omega)
    = \phi(u(\omega),\omega),
    \qquad
    \text{for almost all $\omega \in \Omega$.}
  \]
  Hence $u(\omega)$ is a fixed point of $\phi(\blank,\omega)$ for almost all $\omega
  \in \Omega$.

  \textit{Step 3: $\prob$-almost sure uniqueness of $u(\omega)$.} If $u(\omega),
  z(\omega)$ satisfy $u(\omega) = \phi(u(\omega),\omega)$ and $z(\omega) =
  \phi(z(\omega),\omega)$, respectively, then for almost all $\omega \in \Omega$ it
  holds
  \[
    \| u(\omega)(t) - z(\omega)(t) \|_\XSet \leq  \kappa_N(\omega) \int_0^t \|
    u(\omega)(s) - z(\omega)(s) \|_\XSet\, ds.
  \]
  Using Gronwall's inequality (applying \cite[Chapter 2, Lemma
  6.1]{amannOrdinaryDifferentialEquations2011} with $a(t) \equiv 0$) gives
  \[
    \| u(\omega)(t) - z(\omega)(t) \|_\XSet = 0, \qquad t \in J, \qquad
    \| u(\omega) - z(\omega) \|_{\YSet} = 0, \qquad \text{$\prob$-almost
    surely}.
  \]

  \textit{Step 4: $u$ is a strongly $\prob$-measurable $C^r(J,\XSet)$-valued
  random variable}. Assume $r = 0$.
  We have proved that $u \colon \Omega \to \YSet_0$ is the $\prob$-almost everywhere
  limit of a sequence of strongly $\prob$-measurable functions $y_n \colon \Omega \to
  \YSet_0$, hence \cite[Corollary 1.1.23]{hytonenAnalysisBanachSpaces2016a} implies
  that $u$ is a strongly $\prob$-measurable $\YSet_0$-valued random variable. 
  
  Now assume $r =1$. We have proved that $y_n \colon \Omega \to \YSet_1$ are strongly
  $\prob$-measurable functions. Owing to Property \labelcref{prop:volterra_sol} of
  \cref{thm:volterra}, the mapping $t \mapsto u(\omega)(t)$ is in $C^1(J,\XSet)$ for
  almost all $\omega \in \Omega$. Using \cref{eq:LipN} we estimate
  \[
    \begin{aligned}
    \| y_n(\omega)-u(\omega) \|_{\YSet_1} 
     & = \| y_n(\omega)-u(\omega) \|_{\YSet_0} 
       + \| N(\blank,y_{n-1}(\omega), \omega) - N(\blank, u(\omega),\omega) \|_{\YSet_0} \\
     & \leq  
     \| y_n(\omega)-u(\omega) \|_{\YSet_0} +
     \kappa_N(\omega_w,\omega_f)
       \| y_{n-1}(\omega)-u(\omega) \|_{\YSet_0}  \\
     & \to 0 \qquad \text{as $n \to \infty$},
    \end{aligned}
  \]
  hence $u \colon \Omega \to \YSet_1$ is the $\prob$-almost everywhere
  limit of a sequence of strongly $\prob$-measurable functions $y_n \colon \Omega \to
  \YSet_1$, and \cite[Corollary 1.1.23]{hytonenAnalysisBanachSpaces2016a} implies
  that $u$ is a strongly $\prob$-measurable $\YSet_1$-valued random variable. 

  \textit{Step 5: bounds for $\LSet = 1$.} A bound on $u$ can be found by bounding
  $\|  \phi(u,\blank) \|$ and using \cref{eq:BoundN}. Here we derive a sharper bound
  by using variation of constants, which leads to
  \[
    u(t , \omega) = e^{-t} v(\omega_v) + \int_{0}^t e^{-(t-s)} \big( 
    W(\omega_w)u(s , \omega) + g(s , \omega_g) \big) \, ds.
  \]
  hence for all $t \in J$ and almost all $\omega \in \Omega$
  \[
    \begin{aligned}
      \| u(t , \omega) \|_\XSet 
      & \leq \| v(\omega_v) \|_\XSet + \| g (\blank
	, \omega_g) \|_{\YSet_0} t + \| W(\omega_w) \|_{BL(\XSet)} \int_{0}^t \|
	u(s , \omega) \|_{\XSet} \, ds \\
      & \leq \kappa_v(\omega_v) + \kappa_g(\omega_g)t + \kappa_w(\omega_w)
      \int_{0}^t \| u(s , \omega) \|_{\XSet} \, ds,
    \end{aligned}
  \]
  and Gr\"onwall's Lemma gives $\prob$-almost surely the bounds
  \cref{eq:UBound,eq:UC0Bound}:
  \[
  \begin{aligned}
    & \| u(t , \omega) \|_\XSet \leq 
    e^{\kappa_w(\omega_w)t} ( \kappa_v(\omega_v) +
     \kappa_g(\omega_g)t)) =: M(\omega), & t \in J, \\
   & \| u(\blank , \omega) \|_{\YSet_0} \leq 
    e^{\kappa_w(\omega_w)T} ( \kappa_v(\omega_v) + \kappa_g(\omega_g)T)=:M_0(\omega). \\
  \end{aligned}
\]
To find the bound in the $C^1(J,\XSet)$ norm we use \cref{eq:BoundN} and derive
(omitting $\omega$)
\[
    \| u \|_{\YSet_1} 
   = \| u \|_{\YSet_0} +  \| N(\blank,u,\blank) \|_{\YSet_0} 
   \leq \| u \|_{\YSet_0} +  B_N(\| u \|_{\YSet_0},\blank) 
    \leq \kappa_g + (2+k_w) M_{0} =: M_1
\]
\textit{Step 6: bounds for $\LSet = 0$.} We majorise the nonlinear and forcing terms, which we collect in
\[
  K(x,t,\omega) := g(x,t,\omega_g) + \int_D \! \! w(x,y,\omega_w)
  f(u(t,y,\omega),\omega_f)dy.
\] 
For almost all $(\omega_w, \omega_f,\omega_g)$, it holds
\[
  \| K(\blank,t,\omega)\|_\XSet \leq \kappa_g(\omega_g) + \kappa_D
  \kappa_w(\omega_w)\kappa_f(\omega_f) =: \kappa(\omega),
  \qquad t \in (0,T],
\] 
hence, using integrating factors
\[
  u(x,t,\omega) - e^{-t}v(x,\omega_v) = \int_{0}^t e^{-(t-s)}K(x,s,\omega)\, ds
,\] 
and taking norms
\[
\| u(t,\omega) - e^{-t}v(\omega_v) \|_\XSet \leq \kappa(\omega) \int_{0}^t e^{-(t-s)}\, ds
.\] 
Using the inverse triangle inequality to bound the left-hand side from below, and
integrating on the right-hand side
\[
  \left| 
  \| u(\blank,t,\omega) \|_\XSet - e^{-t} \| v(\blank,\omega_v)\|_\XSet
  \right| 
  \leq \kappa(\omega)( 1 - e^{-t}),
\]
hence
\[
\| u(\blank,t,\omega) \|_\XSet \leq \kappa_v(\omega_v) + \kappa(\omega)( 1 - e^{-t}),
\] 
which implies
\[
\| u(\blank,t,\omega) \|_\XSet \leq 2 \max(\kappa_v(\omega_v),\kappa(\omega))=:M_0(\omega), \qquad t
\in J, \qquad
\text{$\prob$-almost surely,}
\]
that is, \cref{eq:UBound} for $\LSet=0$. The estimate \cref{eq:UC0Bound} follows as
the bound above is homogeneous in $t$. Finally, the bound \cref{eq:UC1Bound} can
be found by estimating
\[
  \begin{aligned}
    \| u(\blank, \omega) \|_{\YSet_1} 
      & = \| u(\blank, \omega) \|_{\YSet_0} +
	      \| u'(\blank, \omega) \|_{\YSet_0} \\
      & \leq 2 \| u(\blank, \omega) \|_{\YSet_0} +\| K(\blank,\blank, \omega)
      \|_{\YSet_0} \\
      & \leq 2 M_0(\omega) + \kappa(\omega).
  \end{aligned}
\]

\end{proof}

\Cref{thm:existenceRealisationLNF} is a step towards characterising solutions to
\cref{prob:NRNF}, in the sense that it provides bounds on realisations of solutions
to \cref{eq:NRNFOp}. In passing, we note that we refer to $u$ as being \textit{the
unique solution} to the problem, even though, strictly speaking $u$ is unique
$\prob$-almost surely. Ultimately, we wish to study the regularity of $u$ as a
$C^0(J,\XSet)$- or $C^1(J,\XSet)$-valued random variable, starting from suitable
hypotheses on $w$, $f$, $g$, and $v$.

To accomplish this task, we must gain control on the random variables in the
bounds \crefrange{eq:UBound}{eq:UC1Bound}, which combine the
variables
$\kappa_\alpha(\omega_\alpha)$, $\alpha \in \USet$.
The random variables $\kappa_g$ and $\kappa_v$ are norms of realisations of
$g$ and $v$ on $C^0(J,\XSet)$ and $\XSet$, respectively, hence we can control them
directly by
demanding that $g$
and $v$ live in an appropriate Banach-valued function space of random functions. On
the other hand, controlling $\kappa_w(\omega)$ does not
necessarily imply controlling $\exp(\kappa_w(\omega)T)$ (and similar for other
products with exponentials), hence further scrutiny is required for the linear model,
as we discuss now.

\subsection{Considerations and further hypotheses on the synaptic
kernel}\label{sec:boundedKernels} 

To address the $L^p$-regularity of the neural field solution in the nonlinear
case, it will be sufficient to demand $L^p$-regularity of the mapping $\omega_w \mapsto
w(\blank,\blank,\omega_w)$, and to use the following result:
\begin{lemma} Assume the hypotheses of \cref{prop:kappaEst}. If $w \in
  L^p(\Omega_w,\WSet)$ then $W \in L^p(\Omega_w,H(\WSet))$.
\end{lemma}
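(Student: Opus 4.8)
The plan is to show that finiteness of the $L^p$-norm of $\omega_w \mapsto w(\blank,\blank,\omega_w)$ in $\WSet$ transfers to the composed map $W = H \circ w$ via the operator norm bound already established for $H$. The key ingredient is the continuity estimate \cref{eq:TBound} from \cref{prop:HMapping}, namely $\| H(k) \|_{BL(\XSet)} \leq \| k \|_\WSet$, which is pointwise in $k$ and therefore passes directly to realisations.

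First I would recall that, by \cref{prop:kappaEst}.\labelcref{prop:WBound}, the mapping $W \colon \Omega_w \to H(\WSet)$, $\omega_w \mapsto H(w(\blank,\blank,\omega_w))$ is already known to be a strongly $\prob_w$-measurable $H(\WSet)$-valued random variable, so membership in the Bochner space $L^p(\Omega_w,H(\WSet))$ reduces to verifying finiteness of the $L^p$-norm. Next I would estimate this norm using \cref{eq:TBound} applied $\prob_w$-almost surely: for almost every $\omega_w$,
\[
  \| W(\omega_w) \|_{BL(\XSet)} = \| H(w(\blank,\blank,\omega_w)) \|_{BL(\XSet)}
  \leq \| w(\blank,\blank,\omega_w) \|_\WSet = \kappa_w(\omega_w).
\]
Raising to the $p$-th power and integrating over $\Omega_w$ then gives, for $p \in [1,\infty)$,
\[
  \| W \|_{L^p(\Omega_w,H(\WSet))}^p
  = \int_{\Omega_w} \| W(\omega_w) \|_{BL(\XSet)}^p \, d\prob_w(\omega_w)
  \leq \int_{\Omega_w} \| w(\blank,\blank,\omega_w) \|_\WSet^p \, d\prob_w(\omega_w)
  = \| w \|_{L^p(\Omega_w,\WSet)}^p < \infty,
\]
with the obvious modification using $\pesssup$ in the case $p = \infty$. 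This shows $W \in L^p(\Omega_w,H(\WSet))$ and in fact yields the sharper quantitative bound $\| W \|_{L^p(\Omega_w,H(\WSet))} \leq \| w \|_{L^p(\Omega_w,\WSet)}$.

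I do not anticipate a substantial obstacle here, since the result is essentially a monotonicity argument for the Bochner integral combined with the already-proven operator bound. The only point requiring a little care is making sure the strong measurability of $W$ is genuinely in place before invoking the Bochner-norm definition — but this is exactly the content of \cref{prop:kappaEst}.\labelcref{prop:WBound}, so it can be cited directly rather than re-proven. The remaining work is the routine chain of inequalities above, valid on the almost-sure set where \cref{eq:TBound} holds.
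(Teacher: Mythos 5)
Your proposal is correct and matches the paper's proof essentially verbatim: both establish strong measurability of $W$ by citing \cref{prop:kappaEst}.\labelcref{prop:WBound}, then integrate the pointwise bound $\| W(\omega_w) \|_{BL(\XSet)} \leq \kappa_w(\omega_w)$ (which is \cref{eq:TBound} applied to realisations) to conclude $\| W \|_{L^p(\Omega_w,H(\WSet))} \leq \| w \|_{L^p(\Omega_w,\WSet)} < \infty$. Your additional remark on the $p=\infty$ case is a harmless extension beyond what the paper records.
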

\begin{proof}
  The measurability of the mapping $W$ is established in
  \cref{prop:kappaEst}.\labelcref{prop:WBound}. In addition, from the bound in 
  \cref{prop:kappaEst}.\labelcref{prop:WBound} we have
  \[
    \| W \|^p_{L^p(\Omega_w,H(\WSet))} = 
    \int_{\Omega_w} \| W(\omega_w) \|^p_{BL(\XSet)} \,d \prob_w(\omega_w) 
    \leq  
    \| w \|^p_{L^p(\Omega_w,\WSet)} < \infty.
  \]
\end{proof}


Linear neural fields with random kernels, on the other hand, require further
attention. We introduce a strong regularity assumption, namely that the random field
$w$ is almost-surely bounded in the variable $\omega_w$. 
\begin{hypothesis}[Boundedness of the synaptic kernel in
  $\omega_w$]\label{hyp:wBounded} The synaptic kernel $w$ is in
  $L^\infty(\Omega_w,\WSet)$.
\end{hypothesis}

In addition to being necessary in contexts where analyticity of the solution $u$ is
required (see \cite{avitabile2024Stochastic} for an example in the context of
stochastic collocation schemes),  \cref{hyp:wBounded} makes it easier to check also
certain hypotheses for the existence and well-posedness of linear neural field
problems with random data, albeit it is not strictly necessary in that context. We
shall present a theory based on this strong hypothesis, and signpost that results can
be obtained under weaker assumptions whenever possible.

\revision{Under \cref{hyp:wBounded}, one can bound
$\kappa_w(\omega_w)$
homogeneously in $\omega_w$. If $w \in L^\infty(\Omega_w,\WSet)$, then for almost
every $\omega_w \in \Omega_w$
\[
  \kappa_w(\omega_w) = \| w(\blank,\blank,\omega_w) \|_\WSet \leq \esssup_{ \omega_w
  \in \Omega_w} \| w(\blank,\blank,\omega_w) \|_\WSet = \| w
  \|_{L^\infty(\Omega_w,\WSet)}.
\]
}

One way to ensure that the random data for the kernel satisfies both
\cref{hyp:randomData}.\labelcref{hyp:kernel} and \cref{hyp:wBounded} 
is to demand that the kernel be
bounded in the spatial variables $x$, $x'$, as well in the stochastic variable
$\omega_w$:
\begin{lemma}\label{hyp:wUniform}
  If $\XSet = C(D)$ and $w \in L^\infty(\Omega_w,C(D \times D))$, or if $\XSet =
  L^2(D)$ and $w \in L^\infty(\Omega_w,L^\infty(D \times D))$, then
  both \cref{hyp:randomData}.\labelcref{hyp:kernel} and \cref{hyp:wBounded} hold.
\end{lemma}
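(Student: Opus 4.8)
The plan is to reduce both claims to a single structural fact: in each case the hypothesised space --- namely $C(D\times D)$ when $\XSet=C(D)$, and $L^\infty(D\times D)$ when $\XSet=L^2(D)$ --- embeds continuously into $\WSet$ via the identity inclusion $\iota$. Once such a continuous linear inclusion is available, \cref{hyp:randomData}.\labelcref{hyp:kernel} follows immediately: $w$ is by assumption strongly $\prob_w$-measurable with values in the larger space, and continuous transformations of strongly measurable functions are strongly measurable \cite[Corollary 1.13]{vanneervenStochasticEvolutionEquations2008}, so $\iota\circ w=w$ is a strongly $\prob_w$-measurable $\WSet$-valued random variable. Likewise, \cref{hyp:wBounded} follows because a continuous inclusion carries a norm inequality of the form $\|k\|_\WSet\le |D|\,\|k\|$ for $k$ in the hypothesised space; taking essential suprema over $\Omega_w$ then bounds $\|w\|_{L^\infty(\Omega_w,\WSet)}$ by $|D|$ times the (finite) $L^\infty$ norm of $w$ in the hypothesised space. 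Thus the entire content of the lemma is the verification of the two continuous inclusions.

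First I would dispatch the case $\XSet=L^2(D)$, where $\WSet=L^2(D\times D)$. Since $D$ is compact, $D\times D$ has finite measure $|D|^2$, and the elementary estimate $\|k\|_{L^2(D\times D)}\le |D|\,\|k\|_{L^\infty(D\times D)}$ gives at once the continuous inclusion $L^\infty(D\times D)\hookrightarrow L^2(D\times D)=\WSet$ with constant $|D|$; nothing further is required here.

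The substantive work is the case $\XSet=C(D)$, where I must show that a continuous kernel $k\in C(D\times D)$ genuinely lies in $\WSet=\{k\in C(D,L^1(D))\colon \lim_{h\to 0}\nu(h;k)=0\}$. Membership in $C(D,L^1(D))$ is routine: each slice $k(x,\cdot)$ is continuous, hence in $L^1(D)$, and the bound $\|k(x,\cdot)-k(z,\cdot)\|_{L^1(D)}\le |D|\sup_{x'\in D}|k(x,x')-k(z,x')|$ shows that $x\mapsto k(x,\cdot)$ is continuous into $L^1(D)$; the norm estimate $\|k\|_\WSet=\max_{x\in D}\int_D|k(x,x')|\,dx'\le |D|\,\|k\|_{C(D\times D)}$ is immediate. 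The hard part --- and the only genuinely nontrivial step in the lemma --- is the decay condition $\lim_{h\to 0}\nu(h;k)=0$, and this is exactly where compactness of $D\times D$ enters. Because $k$ is uniformly continuous on the compact set $D\times D$, for every $\epsi>0$ there is $\delta>0$ such that $\|x-z\|_2<\delta$ forces $|k(x,x')-k(z,x')|<\epsi/|D|$ uniformly in $x'\in D$; integrating over $D$ gives $\nu(h;k)\le\epsi$ for all $h<\delta$, which is precisely $\nu(h;k)\to 0$ as $h\to 0$. This establishes the continuous inclusion $C(D\times D)\hookrightarrow\WSet$ with constant $|D|$, and the two conclusions then follow as outlined in the first paragraph.
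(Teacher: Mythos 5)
Your proof is correct. Note that the paper states this lemma without any proof at all (it is presented as a routine verification), so there is no argument of the authors' to compare against; your write-up supplies exactly the details one would expect to be behind the statement. The $L^2$ case is indeed immediate from the finite measure of $D \times D$, and you correctly identify the only nontrivial point in the $C(D)$ case: the decay condition $\nu(h;k) \to 0$, which you obtain from uniform continuity of $k$ on the compact set $D \times D$ (the same estimate also gives the continuity of $x \mapsto k(x,\cdot)$ into $L^1(D)$, since $\|k(x,\cdot)-k(z,\cdot)\|_{L^1(D)} \leq \nu(\|x-z\|_2;k)$). Your reduction of measurability to composition with the continuous inclusion, via \cite[Corollary 1.13]{vanneervenStochasticEvolutionEquations2008}, mirrors how the paper itself handles analogous measurability claims (e.g.\ in the proof of \cref{thm:volterra} and \cref{prop:kappaEst}), so the argument is fully consistent with the paper's toolkit.
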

The considerations above provide a direct way to ensure that the strong \cref{hyp:wBounded}
is verified in applications, at the expense of ruling out unbounded random kernels in
linear problems.
%
The strong \cref{hyp:wBounded} can be relaxed in some of the results
presented in this section. We now introduce a weaker hypothesis for linear problems,
harder to verify in applications but sufficient to prove $L^p$-regularity of
solutions.

\begin{hypothesis}[Exponential of kernel norms]\label{hyp:exponential} For $\LSet =
  1$ it holds:
  \begin{remunerate}
    \item  For any $t \in \RSet_{\geq 0}$ the random variable $\omega \mapsto
      \exp(\kappa_w(\omega) t)$ is in $L^p(\Omega_w)$.
      \label{hyp:wExp}
    \item  For any $t \in \RSet_{ \geq 0}$, the random variable $\omega \mapsto
      \kappa_w(\omega) \exp(\kappa_w(\omega) t)$ is in $L^p(\Omega_w)$.
      \label{hyp:wTimesWExp}
  \end{remunerate}
\end{hypothesis}
\begin{lemma} If \cref{hyp:wBounded} holds, then so does \cref{hyp:exponential}.
\end{lemma}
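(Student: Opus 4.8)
The plan is to show that Hypothesis 3.5 (boundedness in $\omega_w$) implies Hypothesis 3.7 (the two exponential-moment conditions), by producing a deterministic bound on $\kappa_w$ and using finiteness of the measure $\prob_w$. The key observation, already recorded in the displayed inequality immediately preceding Hypothesis 3.7, is that under Hypothesis 3.5 we have the $\prob_w$-almost-sure bound
\[
  \kappa_w(\omega_w) \leq C := \| w \|_{L^\infty(\Omega_w,\WSet)} < \infty.
\]
So $\kappa_w$ is essentially bounded, and everything reduces to integrability of bounded functions against a probability measure.

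First I would verify Hypothesis 3.7.\labelcref{hyp:wExp}. Fix $t \in \RSet_{\geq 0}$. For $\prob_w$-almost every $\omega_w$ the monotonicity of the exponential gives $\exp(\kappa_w(\omega_w) t) \leq \exp(Ct)$, a deterministic constant. Since $\prob_w$ is a probability measure, $\int_{\Omega_w} \exp(p\, \kappa_w(\omega_w) t)\, d\prob_w \leq \exp(pCt) < \infty$, so $\omega_w \mapsto \exp(\kappa_w(\omega_w) t)$ lies in $L^p(\Omega_w)$. I would also note that measurability of this random variable is inherited from the strong measurability of $\kappa_w$ established in \cref{prop:kappaEst}.\labelcref{prop:WBound}, composed with the continuous map $s \mapsto \exp(st)$.

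Next I would verify Hypothesis 3.7.\labelcref{hyp:wTimesWExp} in the same spirit: almost surely $\kappa_w(\omega_w)\exp(\kappa_w(\omega_w) t) \leq C \exp(Ct)$, again a deterministic constant, so the $L^p$-norm is bounded by $C\exp(Ct)$ times $\prob_w(\Omega_w)^{1/p} = 1$, which is finite. Measurability follows as before from composing the measurable $\kappa_w$ with the continuous map $s \mapsto s\exp(st)$.

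I do not expect a genuine obstacle here: the entire content is that an essentially bounded random variable has finite exponential moments of every order on a probability space, which is immediate once the almost-sure bound $\kappa_w \leq \| w \|_{L^\infty(\Omega_w,\WSet)}$ is in hand. The only point requiring a small amount of care is to confirm measurability of the two composite random variables rather than taking it for granted, but this is routine given the strong measurability of $\kappa_w$ and continuity of the relevant real functions.
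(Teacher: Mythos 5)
Your proof is correct and follows exactly the route the paper intends: the paper leaves this lemma unproven precisely because it is immediate from the displayed estimate $\kappa_w(\omega_w) \leq \| w \|_{L^\infty(\Omega_w,\WSet)}$ preceding the hypothesis, which is the same almost-sure bound you use to dominate $\exp(\kappa_w t)$ and $\kappa_w \exp(\kappa_w t)$ by deterministic constants integrable against the probability measure $\prob_w$. Your added remarks on measurability (composing the strongly measurable $\kappa_w$ with continuous real functions) are a sound, if routine, completion of the argument.
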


\subsection{$L^p$-regularity of the solution}
We now return to studying the regularity of the solution in the linear and nonlinear
neural field with random data, with the following result.

\begin{theorem}
  [$L^p$-regularity of the solution with random data]
  \label{thm:LpRegularity}
  \begin{remunerate}
    \item \label{thm:LpRegularityLinear}
      (Linear case): \crefrange{hyp:domain}{hyp:randomData} hold 
      with $\LSet = 1$, and let 
      \revision{$1 \leq p < \infty $}. If $w \in L^\infty(\Omega_w,\WSet)$, $g \in
      L^p(\Omega_g,C^0(J,\XSet))$, and $v \in L^p(\Omega_v,\XSet)$, then the
  solution $u$ to \cref{eq:NRNFOp} is in
  $L^p(\Omega,C^1(J,\XSet))$.
  \item (Nonlinear case):
    Assume the \crefrange{hyp:domain}{hyp:randomData} hold 
    with $\LSet = 0$, and let \revision{$1 \leq p < \infty$}. If $w \in L^p(\Omega_w,\WSet)$, $f \in
    L^p(\Omega_f,BC(\RSet))$ $g \in L^p(\Omega_g,C^0(J,\XSet))$, and $v \in L^p(\Omega_v,\XSet)$, then the solution $u$ to \cref{eq:NRNFOp} is in $L^p(\Omega,C^1(J,\XSet))$. 
\end{remunerate}
\end{theorem}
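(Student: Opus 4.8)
The plan is to leverage the pathwise bounds from \cref{thm:existenceRealisationLNF} and reduce the entire claim to integrability of the single random variable $M_1$. Since \cref{thm:existenceRealisationLNF} already furnishes a strongly $\prob$-measurable $C^1(J,\XSet)$-valued solution $u$ satisfying $\| u(\blank,\omega) \|_{C^1(J,\XSet)} \leq M_1(\omega)$ $\prob$-almost surely, strong measurability comes for free and it suffices to show $M_1 \in L^p(\Omega)$; this immediately gives $\| u \|_{L^p(\Omega,C^1(J,\XSet))} = (\mean \| u \|_{C^1(J,\XSet)}^p)^{1/p} \leq \| M_1 \|_{L^p(\Omega)} < \infty$. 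The whole argument is thus an integrability computation on the explicit formulas for $M_1$ given in the two cases.

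For the linear case I would first note that $w \in L^\infty(\Omega_w,\WSet)$ is exactly \cref{hyp:wBounded}, which implies \cref{hyp:exponential}, so both $\omega_w \mapsto \exp(\kappa_w(\omega_w)T)$ and $\omega_w \mapsto \kappa_w(\omega_w)\exp(\kappa_w(\omega_w)T)$ lie in $L^p(\Omega_w)$ (equivalently, one may bound $\kappa_w \leq \| w \|_{L^\infty(\Omega_w,\WSet)}$ almost surely and treat the exponential factors as deterministic constants). Writing out
\[
  M_1(\omega) = \kappa_g(\omega_g) + \big(2+\kappa_w(\omega_w)\big)\big(\kappa_v(\omega_v) + T\kappa_g(\omega_g)\big)\exp\big(\kappa_w(\omega_w)T\big),
\]
I would expand into a finite sum of products, each factor of which depends on a single source of randomness and lies in the relevant $L^p$ space: $\kappa_g \in L^p(\Omega_g)$ and $\kappa_v \in L^p(\Omega_v)$ from $g \in L^p(\Omega_g,C^0(J,\XSet))$ and $v \in L^p(\Omega_v,\XSet)$, while the $\kappa_w$-exponential factors lie in $L^p(\Omega_w)$ by the above. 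By independence the $p$-th moment of each product factorises across $\Omega_w,\Omega_g,\Omega_v$, so every term is in $L^p(\Omega)$, and Minkowski's inequality yields $M_1 \in L^p(\Omega)$.

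The nonlinear case follows the same template but is cleaner, since $M_1$ is homogeneous in time and free of exponentials. Using $\max(a,b) \leq a+b$ for $a,b \geq 0$ I would bound
\[
  M_1(\omega) \leq 5\big(\kappa_v(\omega_v) + \kappa_g(\omega_g) + \kappa_D\,\kappa_w(\omega_w)\,\kappa_f(\omega_f)\big).
\]
Each of $\kappa_v,\kappa_g,\kappa_w,\kappa_f$ lies in its appropriate $L^p$ space by the hypotheses $v\in L^p(\Omega_v,\XSet)$, $g\in L^p(\Omega_g,C^0(J,\XSet))$, $w\in L^p(\Omega_w,\WSet)$, $f\in L^p(\Omega_f,BC(\RSet))$. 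The only nontrivial term is the product $\kappa_w\kappa_f$: since $\omega_w$ and $\omega_f$ are independent, $\mean|\kappa_w\kappa_f|^p = \mean|\kappa_w|^p\,\mean|\kappa_f|^p < \infty$, so $\kappa_w\kappa_f \in L^p(\Omega)$, and Minkowski's inequality again gives $M_1 \in L^p(\Omega)$.

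The one genuine subtlety, which I would flag as the main obstacle, is controlling products and exponentials of random variables originating from distinct sources. Independence is essential: it is precisely what lets the $p$-th moments factorise so that a product of $L^p$ variables over independent factors remains in $L^p$, a property that fails for general dependent variables (where one would instead invoke H\"older with matched conjugate exponents and demand correspondingly higher integrability of each factor). In the linear model the exponential $\exp(\kappa_w T)$ is the delicate object, and this is exactly why \cref{hyp:wBounded} (or the weaker \cref{hyp:exponential}) is imposed: boundedness of $\kappa_w$, rather than mere $L^p$-integrability, is what secures the exponential moment.
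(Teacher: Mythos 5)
Your proposal is correct and follows essentially the same route as the paper: both reduce the claim to integrability of the pathwise bound $M_1$ from \cref{thm:existenceRealisationLNF}, use $\kappa_w \leq \| w \|_{L^\infty(\Omega_w,\WSet)}$ to tame the exponential factors in the linear case, and exploit the product structure of $\prob$ (independence) to factorise the mixed moments, notably $\mean[\kappa_w^p\kappa_f^p] = \mean[\kappa_w^p]\,\mean[\kappa_f^p]$ in the nonlinear case. The only differences are cosmetic: you invoke Minkowski's inequality where the paper uses the convexity bound $(a+b)^p \leq 2^{p-1}(a^p + b^p)$, and you go straight to $M_1$ rather than first bounding the $C^0(J,\XSet)$ norm.
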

\begin{proof}
  The proof requires a separate treatment between linear and nonlinear case.
  Assume $\LSet=1$. Owing to the hypotheses on $g$, $v$ and $w$ we have finite
  constants
  \[
  \| \kappa_g \|_{L^p(\Omega_g)} = \| g \|_{L^p(\Omega_g,C^0(J,\XSet))},\quad
  \| \kappa_v \|_{L^p(\Omega_v)} = \| v
  \|_{L^p(\Omega_v,\XSet)}, \quad \kappa_w(\omega_w)\leq \| w\|_{L^\infty(\Omega_w,\WSet)} .
\]
  From \cref{eq:UC0Bound}, the expression for $M_0(\omega)$ for $\LSet=1$ and the
  independence of the random variables $\kappa_w$, $\kappa_g$ and $\kappa_v$ we
  obtain
  \[
    \begin{aligned}
    \| u \|^p_{L^p(\Omega,C^0(J,\XSet))} 
    & \leq \int_{\Omega} e^{p\kappa_w(\omega_w)T} ( \kappa_v(\omega_v) + \kappa_g(\omega_g)T)^p \,
       d\prob_w(\omega_w)d\prob_g(\omega_g)d\prob_v(\omega_v) \\
    & \leq 2^{p-1} e^{Tp \| w\|_{L^\infty(\Omega_w,\WSet)} } \int_{\Omega_g} \int_{\Omega_v}
    (\kappa_v(\omega_v)^p + T^p\kappa_g(\omega_g)^p) \,
       d\prob_g(\omega_g)d\prob_v(\omega_v) \\
    & = 2^{p-1} e^{Tp \| w\|_{L^\infty(\Omega_w,\WSet)}} \Big(  
                \| v \|^p_{L^p(\Omega_v,\XSet)} +T^p \| g \|^p_{L^p(\Omega_g,C^0(J,\XSet))} 
	      \Big) < \infty.
    \end{aligned}
  \]
  From \cref{eq:UC1Bound}, in a similar way we find, omitting the dependence on
  $\omega$, and indicating by $C_p$ a constant dependent on $p$, and whose value may change from passage to passage
  \[
    \begin{aligned}
      \| u \|^p_{L^p(\Omega,C^1(J,\XSet))} 
      & \leq \int_\Omega \Big( \kappa_g + e^{\kappa_w T} (2+\kappa_w)( \kappa_v +
      \kappa_gT) \Big)^p \\
      & \leq C_p \bigg[ \int_{\Omega_g} \kappa^p_g + \int_{\Omega} 
      e^{p\kappa_w T} (2+\kappa_w)^p( \kappa_v + \kappa_gT)^p \bigg]
    \\
    & \leq C_p 
       \bigg[ 
 	\int_{\Omega_g} \kappa^p_g +
 	\int_{\Omega_w} e^{p \kappa_w T} + (\kappa_w e^{\kappa_w T})^p
  	  \int_{\Omega_g} \int_{\Omega_v} \kappa^p_v + (\kappa_gT)^p 
 	\bigg] 
     \\
      &\leq C_p \big[T^p \| \kappa_g \|^p_{L^p} + \big( e^{pT \| w\|_{L^\infty}} + \| w\|_{L^\infty} ^p e^{pT \| w\|_{L^\infty}}\big)
				    \big(\| \kappa_g \|^p_{L^p} + \|
				  \kappa_v\|^p_{L^p} \big) \big] \\
      & < \infty.
   \end{aligned} 
  \]
  Now we pass to the nonlinear case, hence we assume $\LSet = 0$ and use the
  bound \cref{eq:UC0Bound}. Set $\beta = \kappa_g +
 \kappa_D( \kappa_w \kappa_f)$.
  The independence of random variables implies $\beta \in
  L^p(\Omega)$, because
  \[
    \begin{aligned}
    \| \beta  \|^p_{L^p(\Omega)} 
    & \leq 2^{p-1} \int_{\Omega_g} \int_{\Omega_w}
      \int_{\Omega_f} \kappa_g(\omega_g)^p +
      \kappa_D^p \kappa_w(\omega_w)^p \kappa_f(\omega_f)^p d\prob_g d\prob_w d\prob_f \\
    & \leq 2^{p-1} \big( 
        \| \kappa_g \|^p_{L^p(\Omega_g)} 
      + \kappa_D ^p \| \kappa_w \|^p_{L^p(\Omega_w)} 
      \| \kappa_f \|^p_{L^p(\Omega_f)} 
    \big)  \\
    & = 2^{p-1} \big( 
        \| g \|^p_{L^p(\Omega_g,C^0(J,\XSet))} + 
        \kappa_D^p \| w \|^p_{L^p(\Omega_w,BL(\XSet))} 
        \| f \|^p_{L^p(\Omega_f,BC(\RSet))}
	\big) < \infty.
    \end{aligned}
  \]
  Similarly, it holds $M_0 \in L^p(\Omega)$, because
  \[
    \| M_0 \|^p_{L^p(\Omega)} \leq 2^{p-1} \Big( \| v \|^p_{L^p(\Omega_v)} + \| \beta
    \|^p_{L^p(\Omega)} \Big) < \infty.
  \]
  From \cref{eq:UC1Bound} we estimate
  \[
    \begin{aligned}
    \| u \|^p_{L^p(\Omega,C^1(J,\XSet))} 
    & \leq \int_\Omega M_1(\omega)^p d\prob(\omega) = 
      \int_\Omega \! (2M_0(\omega) + \beta(\omega) )^p d\prob(\omega) \\
    & \leq 2^{2p-1} \| M_0 \|^p_{L^p(\Omega)} +
    2^{p-1} \| \beta \|^p_{L^p(\Omega)} < \infty.
    \end{aligned}
  \]
\end{proof}

\begin{remark} As stated in the proof, \cref{thm:LpRegularity}.\labelcref{thm:LpRegularityLinear}, for the
  linear case, relies on the regularity assumption \cref{hyp:wBounded}. It is
  possible to prove a version of this theorem that relies on the milder 
  \cref{hyp:exponential}. In the proof of \cref{thm:LpRegularity}, this is achieved
  by substituting $L^\infty$ norms of $w$ with $L^p$ norms of random variables with
  exponential terms, which are bounded by \cref{hyp:exponential}.
\end{remark}

In what follows, it will be useful to show that the unique solution $u$ to
\cref{eq:NRNFOp} be measurable with respect to some sub $\sigma$-algebras $\calG$ of
$\calF$, as opposed to $\calF$ itself. The result below shows that this is
possible if each random data field is measurable with respect to a sub $\sigma$-algebra
of its original $\sigma$-algebra.

\begin{corollary}[to \cref{thm:LpRegularity}: sub $\sigma$-algebras] \label{cor:subSigma}
  \begin{remunerate}
    \item (Linear case): 
      Assume the hypotheses of \cref{thm:LpRegularity} hold for $\LSet = 1$, and
      let $\calG_\alpha \subset \calF_\alpha$, $\alpha \in \{w,g,v\}$, and $\calG =
  \times_{\alpha} \calG_\alpha \subset \calF$ be sub $\sigma$-algebras\footnote{The
    product of sub $\sigma$-algebras $\calG_\alpha$ is defined by 
  \[
    \calG_w \times \calG_g \times \calG_v = \sigma\big( \{ E_w \times E_g \times E_v \colon 
      E_w \in \calG_w,\; E_g \in \calG_g, \; E_v \in \calG_v\} \big)
  \]
}. If $w$, $g$, $v$ are $\calG_w$-,
  $\calG_g$-, $\calG_v$-measurable, respectively, then $u$ is $\calG$-measurable.
    \item (Nonlinear case):
      Assume the hypotheses of \cref{thm:LpRegularity} hold for $\LSet = 0$, and
      let $\calG_\alpha \subset \calF_\alpha$, $\alpha \in \{w,f,g,v\}$, and $\calG =
  \times_{\alpha} \calG_\alpha \subset \calF$ be sub $\sigma$-algebras. If $w$, $f$, $g$,
  $v$ are $\calG_w$-,$\calG_f$-, $\calG_g$-, $\calG_v$-measurable, respectively, then $u$ is
  $\calG$-measurable. 
  \end{remunerate}
\end{corollary}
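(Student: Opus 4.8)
The plan is to re-run the Picard construction used in the proof of \cref{thm:existenceRealisationLNF}, checking that each operation producing the iterates preserves measurability with respect to the sub $\sigma$-algebra $\calG$ rather than $\calF$. Since $C^r(J,\XSet)$ is separable, strong $\calG$-measurability coincides with $\calG$-measurability, so it suffices to realise $u$ as an almost-sure limit of strongly $\calG$-measurable $C^r(J,\XSet)$-valued functions; such a limit is strongly $\calG$-measurable by the argument of \cite[Corollary 1.1.23]{hytonenAnalysisBanachSpaces2016a}. I would first record the product bookkeeping: because $\calG = \bigtimes_{\alpha} \calG_\alpha$, any map factoring through a single projection $\omega \mapsto \omega_\alpha$ that is $\calG_\alpha$-measurable is $\calG$-measurable, and a tuple is $\calG$-measurable exactly when each of its coordinates is.

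With this in place, the data-derived fields are strongly $\calG$-measurable: $v(\omega_v)$ and $g(\omega_g)$ directly by hypothesis; $A(\omega_w) := H(w(\omega_w))$ because $w$ is $\calG_w$-measurable and $H$ is continuous (\cref{prop:HMapping}), so that the composition is strongly $\calG$-measurable by \cite[Corollary 1.13]{vanneervenStochasticEvolutionEquations2008}; and, in the nonlinear case, $f(\omega_f)$ directly by hypothesis. I would then carry out the induction of \cref{thm:existenceRealisationLNF}. The base iterate $y_0(\omega) = v(\omega_v)$ is strongly $\calG$-measurable. For the inductive step I would re-use the decomposition from the proof of Property \labelcref{prop:phi_rand_1} of \cref{thm:volterra}, writing $y_{n+1}(\omega) = \psi\bigl(y_n(\omega), v(\omega_v), A(\omega_w), \lambda_n(\omega), g(\omega_g)\bigr)$ with $\psi$ the continuous map \cref{eq:PsiAux} and $\lambda_n(\omega)(t) = F(y_n(\omega)(t),\omega_f)$. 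In the linear case $\LSet = 1$ one has $\lambda_n = y_n$, so all arguments of $\psi$ are strongly $\calG$-measurable and hence so is $y_{n+1}$; the induction together with the almost-sure limit $u = \lim_n y_n$ then settles the linear case.

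The main obstacle is the nonlinear term $\lambda_n$, which couples the current iterate $y_n$ with the random firing rate $f$ and thus depends on the full variable $\omega$, not on a single factor, so the product bookkeeping no longer applies directly. To handle it I would show that the Nemytskii map $(u,f) \mapsto \lambda$, $\lambda(t)(x) = f(u(t)(x))$, is jointly continuous from $C^0(J,\XSet) \times BC^1(\RSet)$ to $C^0(J,\XSet)$: splitting $\|\lambda - \bar\lambda\|$ and using that $f \in BC^1(\RSet)$ is Lipschitz with constant $\|f'\|_\infty$ bounds the contribution from $u - \bar u$ by $\|f'\|_\infty \, \|u - \bar u\|_{C^0(J,\XSet)}$, while the contribution from $f - \bar f$ is bounded by $\kappa_D \, \|f - \bar f\|_{BC^1(\RSet)}$, exactly the estimates underlying \cref{prop:kappaEst}.\labelcref{prop:NemitskiProperties}. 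Granting this, $\omega \mapsto (y_n(\omega), f(\omega_f))$ is strongly $\calG$-measurable, hence so is $\lambda_n$, and therefore so is $y_{n+1} = \psi(\cdots)$ by \cite[Corollary 1.13]{vanneervenStochasticEvolutionEquations2008}. The induction and the almost-sure limit then give strong $\calG$-measurability of $u$ for $\LSet = 0$ as well, completing both cases.
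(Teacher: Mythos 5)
Your proof is correct, but it takes a genuinely different route from the paper. The paper's proof is a short abstract argument: since nothing in the chain \cref{thm:volterra}, \cref{thm:existenceRealisationLNF}, \cref{thm:LpRegularity} uses any property of $\calF_\alpha$ beyond the data being measurable with respect to it, the paper simply re-applies \cref{thm:LpRegularity} on the smaller measurable spaces $(\Omega_\alpha,\calG_\alpha,\prob_\alpha)$ to produce a solution $z \in L^p(\Omega,\calG,C^1(J,\XSet))$, and then invokes the $\prob$-almost sure uniqueness of solutions to conclude $u = z$ almost surely, hence $u$ is $\calG$-measurable. You instead unroll that argument: you re-run the Picard iteration and propagate strong $\calG$-measurability through each iterate, so the limit is literally $u$ and no uniqueness step is needed. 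The price you pay is that the coupling between the iterate $y_n$ (which depends on the full $\omega$) and the random firing rate $f$ (which depends on $\omega_f$) must be handled head-on; your solution via joint continuity of the Nemytskii map $(u,f)\mapsto f\circ u$ on $C^0(J,\XSet)\times BC^1(\RSet)$ is sound, and the estimates you cite (Lipschitz constant $\|f'\|_\infty$ for the $u$-increment, $\kappa_D\|f-\bar f\|_{BC^1(\RSet)}$ for the $f$-increment) do establish it; note this is a genuinely new lemma, since \cref{prop:kappaEst}.\labelcref{prop:NemitskiProperties} only gives measurability in $\omega_f$ for \emph{fixed} $u$, and the paper's own machinery resolves the same coupling differently, inside Property 3 of \cref{thm:volterra}, by decomposing the random iterate into $\prob$-simple functions and taking unions of preimages. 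One caution: since $BC^1(\RSet)$ is not separable, your bookkeeping claim that a tuple is measurable iff its coordinates are should be phrased for \emph{strong} measurability (a.e. limits of pairs of simple functions), which is indeed how you use it in the key step; with that reading, both your argument and the paper's share the same standard convention about null sets inherent in strong $\prob$-measurability. In summary: the paper's route is shorter and highlights that the whole well-posedness theory is insensitive to the ambient $\sigma$-algebra, while yours is self-contained, avoids the uniqueness invocation, and makes the measurability propagation explicit at the cost of an extra joint-continuity lemma.
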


\begin{proof}
  See \hyperref[proof:subSigma]{proof} on page \pageref{proof:subSigma}.
\end{proof}

\section{Finite-dimensional noise}
\label{sec:FinDimNoise}
In applications it is often useful to model noise in the input data using a
finite, possibly large number of parameters. This \textit{finite-dimensional noise}
assumption is common when studying PDEs with random
data~\cite{babuskaStochasticCollocationMethod2007,
xiuModelingUncertaintySteady2002,
xiuHighOrderCollocationMethods2005,
nobileSparseGridStochastic2008a,
Zhang.2012,
hoangSparseTensorGalerkin2013a,
Lord:2014ir,
adcockSparsePolynomialApproximation2022} and arises naturally when random
fields are written in terms of a truncated Karhunen-Loeve expansion. For noisy
initial conditions $v$, this would lead to an expression of type
\[
  v(x,\omega) = \mean v(x,\blank) + \sum_{j \in \NSet_q} A_j \psi_j(x) Y_j(\omega),
  \qquad A_j \in \RSet, \quad \psi_j \colon D \to \RSet, \quad Y_j \colon \Omega \to
  \RSet,
\]
where $Y_j$ are iid with zero mean. Nonlinear parametrisations of $v$ in the random
parameters $\{ Y_j \}$ are also
possible~\cite{babuskaStochasticCollocationMethod2007,Lord:2014ir} (see also
\cite{avitabile2024Stochastic} for examples of affine and non-affine parametrisations
in the context of neural fields). It is thus useful to revisit the results of the
Cauchy problem with random data under the finite-dimensional noise assumption, which
we formalise as in \cite[Definition 9.38]{Lord:2014ir}.
\begin{definition}[$m$-dimensional, $p$th-order, $\BSet$-valued noise] 
  \label{def:finDimNoise}
  Let $m,k \in
  \NSet$, and $\BSet$ be a Banach space. Further, let $\{ y_k \}$, $k \in \NSet_m$ be a collection of $m$ independent
  random variables $y_k \colon \Omega \to \Gamma_k \subset \RSet$. A random variable
  $f \in L^p(\Omega,\BSet)$ of the form $f(\blank,y(\omega))$, where
  $y = (y_1,\ldots,y_m) \colon \Omega \to \Gamma = \Gamma_1 \times \dots \times
  \Gamma_m$, is called an
  $m$-dimensional, $p$th-order, $\BSet$-valued noise. We abbreviate this by saying that
  $f \in L^p(\Omega,\BSet)$ is $m$-dimensional noise.
\end{definition}

To explore problems with finite-dimensional random data we work with the following. 

\begin{hypothesis}[Finite-dimensional noise random data]\label{hyp:finDimNoise}
  \revision{Let $1 \leq p < \infty $.} There exist random variables $\{ Y_\alpha \}_{\alpha \in
  \USet}$ such that:
  \begin{remunerate}
    \item 
  If $\LSet = 1$ then
    \begin{align*}
      &
      w \in L^\infty (\Omega_w, \WSet), 
      && 
      w(\blank,\blank,\omega_w) = \tilde w(\blank,\blank,Y_w(\omega_w)),
      && 
      Y_w \colon \Omega_w \to \Gamma_w \subset \RSet^{m_w},
      && 
      Y_w \sim \rho_w,
      \\
      &
      g \in L^p(\Omega_f, C^0(J,\XSet)), 
      && 
      g(\blank,\blank,\omega_g) = \tilde g(\blank,\blank,Y_g(\omega_g)),
      && 
      Y_g \colon \Omega_g \to \Gamma_g \subset \RSet^{m_g},
      && 
      Y_g \sim \rho_g,
      \\
      &
      v \in L^p(\Omega_v, \XSet), 
      && 
      v(\blank,\omega_v) = \tilde v(\blank,Y_v(\omega_v)),
      && 
      Y_v \colon \Omega_v \to \Gamma_v \subset \RSet^{m_v},
      && 
      Y_v \sim \rho_v.
    \end{align*}
  \item
  If $\LSet = 0$ then
    \begin{align*}
      &
      w \in L^p (\Omega_w, \WSet), 
      && 
      w(\blank,\blank,\omega_w) = \tilde w(\blank,\blank,Y_w(\omega_w)),
      && 
      Y_w \colon \Omega_w \to \Gamma_w \subset \RSet^{m_w},
      && 
      Y_w \sim \rho_w,
      \\
      &
      f \in L^p(\Omega_f, BC(\RSet)), 
      && 
      f(\blank,\omega_f) = \tilde f(\blank,Y_f(\omega_f)),
      && 
      Y_f \colon \Omega_f \to \Gamma_f \subset \RSet^{m_f},
      && 
      Y_f \sim \rho_f,
      \\
      &
      g \in L^p(\Omega_f, C^0(J,\XSet)), 
      && 
      g(\blank,\blank,\omega_g) = \tilde g(\blank,\blank,Y_g(\omega_g)),
      && 
      Y_g \colon \Omega_g \to \Gamma_g \subset \RSet^{m_g},
      && 
      Y_g \sim \rho_g,
      \\
      &
      v \in L^p(\Omega_v, \XSet), 
      && 
      v(\blank,\omega_v) = \tilde v(\blank,Y_v(\omega_v)),
      && 
      Y_v \colon \Omega_v \to \Gamma_v \subset \RSet^{m_v},
      && 
      Y_v \sim \rho_v.
    \end{align*}
\end{remunerate}
\end{hypothesis}
To make progress in analysing the problem with finite-dimensional noise, we
introduce random fields for the input data
which depend on a single multivariate
random variable $Y$, as opposed to the random variables $\{ Y_\alpha \}$ featuring in
\cref{hyp:finDimNoise}. More precisely we set 
\[
  Y \colon \Omega \to \Gamma = \Gamma_1 \times \ldots \times \Gamma_m \subset
  \RSet^m, \qquad m = \sum_{\alpha \in \USet} m_\alpha,  
  \qquad
  Y \sim \rho = \prod_{\alpha \in \USet} \rho_\alpha,
\]
and consider functions $\hat w$, $\hat f$, $\hat g$, $\hat v$, satisfying $\rho
dy$-almost everywhere
\[
  \begin{aligned}
  \hat w(\blank,\blank,y) := \tilde w(\blank,\blank,y_w),
  \quad 
  \hat f(\blank,y) := \tilde f(\blank,y_w),
  \quad 
  \hat g(\blank,\blank,y) := \tilde g(\blank,\blank,y_g),
  \quad 
  \hat v(\blank,y) := \tilde v(\blank,y_g).
  \end{aligned}
\]
These auxiliary functions, denoted with a hat, are useful in some
contexts, when we want to simplify statements involving $y = \{y_\alpha \colon \alpha
\in \USet\}$, with $\USet = \{w,g,v\}$ (linear case) or
$\USet = \{w,f,g,v\}$ (nonlinear case). Also, we omit hat or tildes, when the context
is clear.

%
We expect that a neural field problem with finite-dimensional noise data
admits a finite-dimensional noise solution. This is confirmed by the following
lemma, whose proof adapts \cite[Proposition 4.1]{martinezfrutosOptimalControlPDEs2018} to
the case of multiple noise sources, and to the integro-differential equations under
consideration.

\begin{lemma}[Finite-dimensional noise solution] \label{lem:finDimNoise}
  Under \crefrange{hyp:domain}{hyp:randomData}
  if the random data satisfy the finite-dimensional noise
  \cref{hyp:finDimNoise}, then the solution $u \in L^p(\Omega,C^1(J,\XSet))$ to
  \cref{eq:NRNFOp} is
$m$-dimensional noise of the form
$u(\blank, \blank,\omega) = \tilde u(\blank,\blank,Y(\omega))$
where $m = \sum_\alpha m_\alpha$, and 
$Y(\omega) = (Y_w(\omega_w), Y_g(\omega_g), Y_v(\omega_v))$, if $\LSet= 1$, or
$Y(\omega) = (Y_w(\omega_w), Y_f(\omega_f), Y_g(\omega_g), Y_v(\omega_v))$, if
$\LSet= 0$.
\end{lemma}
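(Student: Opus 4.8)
The plan is to reduce the statement to a measurability-with-respect-to-$\sigma(Y)$ argument, followed by a Doob--Dynkin factorisation. Observe first that the integrability conditions built into \cref{hyp:finDimNoise} (namely $w \in L^\infty(\Omega_w,\WSet)$ or $L^p(\Omega_w,\WSet)$, and $f,g,v$ in the respective $L^p$ spaces) are precisely those required by \cref{thm:LpRegularity}; hence that theorem already secures existence, uniqueness, and $u \in L^p(\Omega,C^1(J,\XSet))$. The only genuinely new content is therefore the representation $u(\blank,\blank,\omega)=\tilde u(\blank,\blank,Y(\omega))$, and for this I would argue through sub-$\sigma$-algebras.

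First I would set $\calG_\alpha := \sigma(Y_\alpha) \subset \calF_\alpha$ for each $\alpha \in \USet$. By \cref{hyp:finDimNoise} each data field is, almost surely, the composition of a Borel-measurable deterministic map ($\tilde w$, $\tilde f$, $\tilde g$, $\tilde v$) with the corresponding finite-dimensional noise variable $Y_\alpha$; consequently $w$, $g$, $v$, and (in the nonlinear case) $f$ are $\calG_w$-, $\calG_g$-, $\calG_v$-, $\calG_f$-measurable respectively. This is exactly the hypothesis demanded by \cref{cor:subSigma}, so with $\calG = \bigtimes_{\alpha} \calG_\alpha$ that corollary yields that the solution $u$ is $\calG$-measurable.

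Next I would identify $\calG$ with $\sigma(Y)$, where $Y = (Y_\alpha)_{\alpha \in \USet} \colon \Omega \to \Gamma \subset \RSet^m$ is the concatenated noise and $m = \sum_\alpha m_\alpha$. On the one hand $Y$ is $\calG$-measurable, since each coordinate block $Y_\alpha$ is $\calG_\alpha$-measurable; on the other hand each $Y_\alpha$ is recovered from $Y$ by a coordinate projection, so $\calG_\alpha \subset \sigma(Y)$ and hence $\calG \subset \sigma(Y)$. Thus $\calG = \sigma(Y)$, and $u$ is $\sigma(Y)$-measurable.

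The final and most delicate step is the factorisation itself. Since $\XSet \in \{C(D),L^2(D)\}$ is separable and $J$ is compact, $C^1(J,\XSet)$ is a separable Banach space; the strongly $\prob$-measurable, $\sigma(Y)$-measurable random variable $u$ therefore admits, by the Doob--Dynkin (factorisation) lemma for separable-Banach-valued maps, a Borel-measurable $\tilde u \colon \Gamma \to C^1(J,\XSet)$ with $u(\omega)=\tilde u(Y(\omega))$ for $\prob$-almost every $\omega$, which is the asserted representation. Pushing the law of $\omega$ forward to $\rho = \prod_\alpha \rho_\alpha$ on $\Gamma$ and invoking $u \in L^p(\Omega,C^1(J,\XSet))$ shows $\tilde u \in L^p_\rho(\Gamma,C^1(J,\XSet))$, so $u$ is an $m$-dimensional, $p$th-order, $C^1(J,\XSet)$-valued noise in the sense of \cref{def:finDimNoise}. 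I expect the Banach-valued Doob--Dynkin factorisation, which hinges on separability of the target space, to be the main technical point, whereas the identification $\calG = \sigma(Y)$ and the appeal to \cref{cor:subSigma} are routine.
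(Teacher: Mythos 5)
Your proof is correct, and its skeleton is the same as the paper's: equip each data field with the sub-$\sigma$-algebra $\calG_\alpha = \sigma(Y_\alpha)$, use the sub-$\sigma$-algebra result (\cref{cor:subSigma}; the paper phrases this as applying \cref{thm:LpRegularity} with $\calG_\alpha = \sigma_\alpha(Y_\alpha)$, which is the same device) to conclude that $u$ is $\sigma(Y)$-measurable, and then factor through $Y$ via Doob--Dynkin. The one genuine difference is the level at which Doob--Dynkin is invoked. The paper applies the \emph{scalar} lemma pointwise: for each fixed $(x,t) \in D \times J$ it produces a measurable $h_{x,t} \colon \RSet^m \to \RSet$ with $u(x,t,\omega) = h_{x,t}(Y(\omega))$, and then sets $\tilde u(x,t,y) := h_{x,t}(y)$. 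You apply the factorisation lemma once, for maps into the separable Banach space $C^1(J,\XSet)$, obtaining a single Borel map $\tilde u \colon \Gamma \to C^1(J,\XSet)$ with $u(\omega) = \tilde u(Y(\omega))$ $\prob$-almost surely. Your variant requires a marginally less elementary tool (Doob--Dynkin for separable-Banach-valued maps rather than for real-valued ones), but it buys two concrete advantages: first, it directly delivers measurability of $y \mapsto \tilde u(\blank,\blank,y)$ as a $C^1(J,\XSet)$-valued map and hence, by pushforward, $\tilde u \in L^p_\rho(\Gamma,C^1(J,\XSet))$ --- exactly the content of \cref{cor:uLRho}, which the paper's pointwise construction leaves implicit; second, it avoids the pointwise evaluations $u(x,t,\blank)$, which are not canonically defined when $\XSet = L^2(D)$, a detail the paper's argument glosses over. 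Your explicit verification that $\bigtimes_{\alpha} \sigma(Y_\alpha) = \sigma(Y)$ (via coordinate projections one way and rectangles generating $\calB(\RSet^m)$ the other) is also a point the paper takes for granted.
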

\begin{proof}
  See \hyperref{proof:finDimNoise} on page \pageref{proof:finDimNoise}.
\end{proof}

With these premises, under the finite dimensional noise assumption the neural field
problem becomes\footnote{Compare the same statement when one uses only the variables with the tilde
  \[
  \begin{aligned}
  \partial_t \tilde u(t,y(\omega)) =
     &- \tilde u(t,y(\omega)) +  \tilde g(t,y_g(\omega_g)) & \\ 
    & + \tilde W(y_w(\omega_w)) \tilde F(\tilde u(t,y(\omega)), y_f(\omega_f)),
    & t \in (0,T], \text{ $\prob$-a.e. in $\Omega$} \\
    \tilde u(0,y(\omega)) =& \; \tilde v(y_v(\omega_v)).
  & &
  \end{aligned}
\]
Further, when we use the hatted variables below we introduce the space $v \in
L^p_\rho(\Gamma,\XSet)$ in place of $v \in L^p_{\rho_v}(\Gamma_v,\XSet)$ (and all the
variants for each $\alpha \in \USet$).
}
\[
  \begin{aligned}
  \partial_t \tilde u(t,Y(\omega)) =
     &- \tilde u(t,Y(\omega)) +  \hat g(t,Y(\omega)) & \\ 
    & + \hat W(Y(\omega)) \hat F(\tilde u(t,Y(\omega)), Y(\omega)),
    & t \in (0,T], \text{ $\prob$-a.e. in $\Omega$}, \\
    \tilde u(0,Y(\omega)) =& \; \hat v(Y(\omega)).
			     & \text{$\prob$-a.e. in $\Omega$},
  \end{aligned}
\]
which is equivalent to the deterministic problem
\[
  \begin{aligned}
   & \partial_t \tilde u(t,y) =
     - \tilde u(t,y) +  \hat g(t,y) 
    + \hat W(y) \hat F(\tilde u(t,y), y), 
    & t \in (0,T], \text{ $\rho \, dy$-a.e. in $\Gamma$}, \\
    & \tilde u(0,y) = \; \hat v(y).
    & \text{ $\rho \, dy$-a.e. in $\Gamma$}. \\
  \end{aligned}
\]
Dropping tildes and hats, we arrive at the following parametric finite-dimensional problem.

\begin{problem}[Neural field problem with finite-dimensional
  noise]\label{prob:FiniteDimNRNF}
  Fix $\LSet$, $w$, $g$, $v$, and possibly $f$. Given the joint density $\rho =
  \prod_{\alpha \in \USet} \rho_\alpha$ of the multivariate random variable $Y$, find
  $u \colon J \times \Gamma \to \XSet$ such that
\begin{equation}\label{eq:FinDimNRNFOp}
  \begin{aligned}
  & u'(t , y) = N(t, u(t , y) , y), 
  & t \in (0,T], \\
  & u(0 , y) = v(y),
    &
  \end{aligned}
  \qquad
  \textrm{$\rho \, dy$-a.e. in $\Gamma$.} \\
\end{equation}
\end{problem}

Further, by introducing suitable function spaces for the finite-dimensional noise variables,
we can study the well-posedness of the problems above. To fix the ideas, let us consider
the random field for the initial condition: for an $m_v$-dimensional noise of the form
$v(\blank,\omega_v) = \tilde v(\blank,Y_v(\omega_v)) = \hat v (\blank,Y(\omega) )$
the following conditions are equivalent
\[
  \begin{aligned}
  & v \in  L^p(\Omega_v,\calF,\prob,\XSet) =: L^p(\Omega_v,\XSet),   \\
  & \tilde v \in L^p(\Gamma_v,\mathcal{B}(\RSet^{m_v}),\rho_v\,dy_v,\XSet)
  =: L^p_{\rho_v}(\Gamma_v,\XSet),  \\
  & \hat v \in L^p(\Gamma, \mathcal{B}(\RSet^{m}), \rho \, dy,\XSet) =: L^p_{\rho}(\Gamma,\XSet),
  \end{aligned}
\]
and analogous considerations are valid for $w$, $f$, $g$, and $u$ in their
respective function spaces. These considerations lead to the following corollary.

\begin{corollary}[to \cref{lem:finDimNoise}: $L^p_\rho$-regularity with
  finite-dimensional noise]\label{cor:uLRho}
Under the hypotheses of \cref{lem:finDimNoise}, it holds $w \in
L^\infty_\rho(\Gamma,\WSet)$ (if $\LSet=1)$ or $w \in
L^p_\rho(\Gamma,\WSet)$ (if $\LSet=0)$, $f \in L^p_\rho(\Gamma,BC(\RSet))$, $g \in
L^p_\rho(\Gamma, C^0(J,\XSet))$, and $v \in
L^p_\rho(\Gamma,\XSet)$. Further, \cref{prob:FiniteDimNRNF} has a unique solution $u \in
L^p_\rho(\Gamma,C^1(J,\XSet))$.
\end{corollary}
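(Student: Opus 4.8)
The plan is to transfer the random-data and solution regularity already established on the probability spaces $(\Omega_\alpha,\calF_\alpha,\prob_\alpha)$ over to the parameter space $(\Gamma,\calB(\RSet^m),\rho\,dy)$, using the equivalence of the three Bochner spaces $L^p(\Omega_\alpha,\YSet)$, $L^p_{\rho_\alpha}(\Gamma_\alpha,\YSet)$, and $L^p_\rho(\Gamma,\YSet)$ recorded just before the statement. This equivalence is a change-of-variables identity under the pushforward $Y_\ast\prob = \rho\,dy$, and it holds for an arbitrary Banach target $\YSet$; I would apply it repeatedly with different choices of $\YSet$.

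First I would establish the regularity of the hatted data fields. By \cref{hyp:finDimNoise}, invoked through the hypotheses of \cref{lem:finDimNoise}, each field is finite-dimensional noise, so that $w(\blank,\blank,\omega_w)=\hat w(\blank,\blank,Y(\omega))$ and analogously for $f$, $g$, $v$; the same hypothesis places $w$ in $L^\infty(\Omega_w,\WSet)$ when $\LSet=1$ and in $L^p(\Omega_w,\WSet)$ when $\LSet=0$, $f$ in $L^p(\Omega_f,BC(\RSet))$, $g$ in $L^p(\Omega_g,C^0(J,\XSet))$, and $v$ in $L^p(\Omega_v,\XSet)$. Applying the equivalence with $\YSet$ successively equal to $\WSet$, $BC(\RSet)$, $C^0(J,\XSet)$, and $\XSet$ turns each of these membership statements into the corresponding $L^\infty_\rho(\Gamma,\WSet)$ (resp. $L^p_\rho(\Gamma,\WSet)$), $L^p_\rho(\Gamma,BC(\RSet))$, $L^p_\rho(\Gamma,C^0(J,\XSet))$, and $L^p_\rho(\Gamma,\XSet)$ claims for the hatted fields, which is the first assertion.

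Second, for the solution itself, \cref{lem:finDimNoise} guarantees that the unique solution $u$ of \cref{eq:NRNFOp} is $m$-dimensional noise, $u(\blank,\blank,\omega)=\tilde u(\blank,\blank,Y(\omega))$, while \cref{thm:LpRegularity} guarantees $u \in L^p(\Omega,C^1(J,\XSet))$. Using the equivalence once more with $\YSet = C^1(J,\XSet)$ yields $\hat u \in L^p_\rho(\Gamma,C^1(J,\XSet))$, i.e.\ $u \in L^p_\rho(\Gamma,C^1(J,\XSet))$ after dropping hats. Since \cref{prob:FiniteDimNRNF} is nothing but the deterministic parametric rewriting of \cref{eq:NRNFOp} obtained by the finite-dimensional substitution, this $\tilde u$ solves \cref{eq:FinDimNRNFOp} for $\rho\,dy$-almost every $y \in \Gamma$, which gives existence.

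The uniqueness is where I expect the main subtlety to lie, and I would argue it by pulling solutions back to $\Omega$. If $u_1,u_2 \in L^p_\rho(\Gamma,C^1(J,\XSet))$ both solve \cref{eq:FinDimNRNFOp} $\rho\,dy$-almost everywhere, then the compositions $\omega \mapsto u_i(\blank,Y(\omega))$ are, by the same equivalence, strongly $\prob$-measurable $C^1(J,\XSet)$-valued random variables satisfying \cref{eq:NRNFOp} $\prob$-almost surely; the $\prob$-almost-sure uniqueness in \cref{thm:existenceRealisationLNF} then forces $u_1(\blank,Y(\omega)) = u_2(\blank,Y(\omega))$ for $\prob$-almost all $\omega$, and pushing this equality forward through $Y_\ast\prob = \rho\,dy$ gives $u_1 = u_2$ for $\rho\,dy$-almost every $y$. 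The hard part will be checking that this pushforward/pullback correspondence respects both measurability and null sets in \emph{both} directions—precisely the content of the three-way Bochner-space equivalence—so that no information about exceptional sets is lost in moving between $\Omega$ and $\Gamma$.
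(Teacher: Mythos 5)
Your proposal is correct and follows essentially the same route as the paper, which states \cref{cor:uLRho} without a separate proof, treating it as an immediate consequence of the three-way equivalence $L^p(\Omega_\alpha,\YSet) \leftrightarrow L^p_{\rho_\alpha}(\Gamma_\alpha,\YSet) \leftrightarrow L^p_\rho(\Gamma,\YSet)$ recorded just before the statement, combined with \cref{lem:finDimNoise} and \cref{thm:LpRegularity} --- exactly your first two steps. Your explicit pullback argument for uniqueness (noting that the exceptional set on $\Omega$ is a preimage $Y^{-1}(B)$, so the pushforward identity $Y_\ast\prob = \rho\,dy$ transfers the null set correctly) simply fills in a detail the paper leaves implicit, and it is sound.
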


\section{Spatially-projected problem with random data}\label{sec:projProblem}
When defining schemes for the approximate solutions of neural field problems with
random data it is required to study properties of solutions to neural
fields in semi-discrete form, that is, after a spatial discretisation has been
applied. We therefore turn our attention to neural fields with random data in
semi-discrete form.

A generic framework for discretising deterministic neural field problems has been
proposed in \cite{avitabileProjectionMethodsNeural2023}, adopting projection
operators \cite{atkinson1997,atkinson2005theoretical}, and we adopt this framework here too. With the view of
discretising space, we introduce a sequence of finite-dimensional
approximating subspaces $\{\XSet_n \colon n \in \NSet\} \subset \XSet$, with $\overline{\cup_{n \in
\NSet} \XSet_n} = \XSet$, $\dim
\XSet_n = s(n) \to \infty$ as $n \to \infty$, and $\XSet_n = \spn\{ \phi_j \colon j
\in \NSet_{s(n)}\}$, where $\{ \phi_j \colon j \in \NSet \}$ is a basis for $\XSet$.
On $\XnSet$ we place the norm $\|\blank \|_\XSet$, and henceforth we consider
subspaces $\left( \XnSet,\|\blank\|_\XSet \right)$.
We introduce a family of bounded projection operators $\{P_n \colon n \in \NSet\}$,
with $P_n \in BL(\XSet,\XSet_n)$. We place on $BL(\XSet,\XnSet)$ the operator norm $\| \blank
\|_{BL(\XSet)}$, for which we will use the unadorned symbol $\| \blank \|$ whenever
possible. For a family of projector operators it holds $\|P_n\| \geq 1$ for all $n
\in \NSet$.

The spatial projectors of interest to us are \textit{interpolatory and orthogonal
projectors}, and an abstract formulation which treats simultaneously both choices is
possible for deterministic neural field equations~\cite{avitabileProjectionMethodsNeural2023}.
Here we study the problem in a similar fashion, but working with neural fields with
random data.
For a function $v \in \XSet$, one defines $P_n$ through the action
\begin{equation}\label{eq:ProjAction}
  (P_n v)(x) = \sum_{j \in \NSet_{s(n)}} V_j \phi_j(x), \qquad x \in D.
\end{equation}
A typical functional setup for schemes with interpolatory projectors in a neural
field involves $\XSet = C(D)$ and leads to $\phi_j=l_j$, with $j\in\ZSet$, where $l_j$ is the $j$th Lagrange interpolation polynomial with nodes $\{ x_j:j\in\ZSet_n\}$ and $V_j = v(x_j)$, whereas in a typical setup for
orthogonal projectors one has $\XSet = L^2(D)$ and $V_j = \langle v,\phi_j
\rangle_\XSet$.

We defer to the literature cited above for examples of concrete choices of the
projectors, and work on the abstract formulation of the problem using
\cref{eq:ProjAction}. We define
spatially-projected schemes to approximate realisations of \cref{eq:NRNFOp}. For
fixed $n \in \NSet$ we consider the problem
\begin{equation}\label{eq:NRNFOp_Pn}
  \begin{aligned}
    u_n'(t,\omega) & = P_n N(t,u_n(t,\omega),\omega), \qquad t \in (0,T],\\
    u_n(0,\omega) & = P_n v(\omega),
  \end{aligned}
\end{equation}
for which we seek for a solution $u_n \in L^p(\Omega,\XSet_n)$, in the
sense given in \cref{prob:NRNF}, approximating the solution $u \in L^p(\Omega,\XSet)$
to \cref{eq:NRNFOp}. 

A useful consideration for unpacking the notation hidden in \cref{eq:NRNFOp_Pn} is to
keep in mind that the projector $P_n$ acts \textit{exclusively on the spatial
  variable $x$, which is not exposed in \cref{eq:NRNFOp_Pn}}. 
For instance, for the initial condition $v \in L^p(\Omega_v,\XSet)$ we write
$P_n v(\omega_v)$, which is consistent with the observation that $\prob_v$-almost
surely $v(\omega_v)\in \XSet$, hence
\[
  (P_n v(\omega_v))(x) = \sum_{i \in \NSet_{s(n)}} V_i(\omega_v) \phi_i(x),
\]
where $V_i(\omega_v) = v(x_i,\omega_v)$ or $V_i(\omega_v) = \langle
v(\blank,\omega_v),\phi_i \rangle_\XSet$.

Secondly, to define $P_n N$ with $N$ given in \Cref{eq:NDef}, the projector $P_n$
must act on realisations of the forcing term $g \in L^p(\Omega_g,C^0(J,\XSet))$, and
so we shall write $P_n g(t,\omega_g)$ to indicate
\[
  (P_n g(t,\omega_g))(x) = \sum_{i \in \NSet_{s(n)}} G_i(t,\omega_g) \phi_i(x),
\]
with the usual considerations for $G_i$.

Thirdly, to project $N$ we must project the action of the integral operator
realisations $W(\omega_w)$, and we formalise this step by composing $P_n$ and
$W(\omega_w)$: 
\begin{equation}\label{eq:PnWDef}
  P_n W(\omega_w) \colon v \mapsto P_n \int_D w (\blank,x',\omega_v) v(x')\,dx'
  = \sum_{i \in \NSet_{s(n)}} \phi_i \int_D W_i(x',\omega_w) v(x') \, dx', 
\end{equation}
where, consistently with our previous notation, for almost all $\omega_w \in
\Omega_w$ it holds $W_i(x',\omega_w) = w(x_i,x',\omega_w)$ for interpolatory
projectors, and $W_i(x',\omega_w) = \langle
w(\blank,x',\omega_w),\phi_i \rangle_\XSet$ for orthogonal projectors.
We use $P_n w$ to
indicate a projection with respect to the variable $x$ only, that is
\[
  (P_n w)(x,x',\omega_w) = P_n w(\blank,x',\omega_w)(x).  
\]

It follows from
\cref{eq:PnWDef} that one can also equivalently interpret $P_n W$ composing the linear
mapping $H$ in \cref{eq:HDef} with a projection of the function $x \mapsto
w(x,\blank,\blank)$, as follows
\begin{equation}\label{eq:HwProj}
    P_n W(\omega_w)  
    = H\Big ( \sum_{i \in \NSet_{s(n)}} W_i(\blank,\omega_ w) \phi_i  \Big )
    = H \big(P_n w(\blank, \blank, \omega_w)\big).
\end{equation}
In addition, for any $u \in \XSet$ and $n \in \NSet_n$ it holds $P_{n}(WF(u)) =
(P_{n}W)F(u)$, which prompts us to use unambiguously $P_n WF(u)$ for an operator on
$\XSet$ to $\XSet_n$. 
 
With these preparations, the vector field of 
\cref{eq:NRNFOp_Pn} is well defined as
\[
  \begin{aligned}
    P_n N \colon J \times \XnSet \times \Omega & \to \XnSet \\
    (t,u_n,\omega) & \mapsto -u_n + P_n W(\omega_w) F(u_n,\omega_f) 
    + P_n g(t,\omega_g),
  \end{aligned}
\]
and we seek to state for \cref{eq:NRNFOp_Pn} an analogue of
\cref{thm:existenceRealisationLNF}.

We stress that the projected problem \cref{eq:NRNFOp_Pn} econmpasses at the same time
strong and weak
problem formulations in the spatial variable. In
\cite{avitabileProjectionMethodsNeural2023}, it is shown that one realisation of the evolution equation
\cref{eq:NRNFOp_Pn} generates Finite-Element Collocation, Spectral Collocation,
Finite-Element Galerkin, and Spectral Galerkin schemes, obtained by choosing between
interpolatory and orthogonal projectors, and between locally- and
globally-supported basis $\{\phi_i\}$. 

%
The boundedness of $P_n$ leads to the following estimates, which are helpful to
transfer bounds on $w$, $g$, and $v$ to bounds on $P_n W$, $P_n g$, and $P_n v$,
respectively.
\begin{proposition}\label{prop:kappaEst_Pn}
  Assume \crefrange{hyp:domain}{hyp:phaseSpace} and
  \cref{hyp:randomData}.\labelcref{hyp:kernel}--\labelcref{hyp:initial} and let
  \[
  \begin{aligned}
    & \kappa_{w,n}(\omega_w) :=
    \| P_n W(\omega_w) \|_{BL(\XSet,\XnSet)}, 
    && \kappa_{w}(\omega_w) :=
    \| w(\omega_w) \|_{\WSet}, 
    \\
    & \kappa_{g,n}(\omega_g) :=
    \| P_n g(\blank,\omega_g) \|_{C^0(J,\XSet)}, 
    && \kappa_{g}(\omega_g) :=
    \| g(\blank,\omega_g) \|_{C^0(J,\XSet)}, 
    \\
    & \kappa_{v,n}(\omega_v) :=
    \| P_n v(\omega_v) \|_{\XSet},
    && \kappa_{v}(\omega_v) :=
    \| v(\omega_v) \|_{\XSet}.
  \end{aligned}
  \]
  
  For any $n \in \NSet$ and  $\alpha \in \{w,g,v\}$ it holds
  \[
    \kappa_{\alpha,n} \leq \| P_n \| \kappa_\alpha \qquad \text{$\prob_\alpha$-almost surely}
  .\] 
\end{proposition}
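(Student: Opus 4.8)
The plan is to derive all three estimates from the single operator-norm inequality $\| P_n x \|_\XSet \leq \| P_n \| \, \| x \|_\XSet$, valid for every $x \in \XSet$, together with submultiplicativity of the operator norm for the kernel term. Each bound is established pointwise in $\omega_\alpha$ on the full-measure subset of $\Omega_\alpha$ where the corresponding datum takes values in its Banach space, which is precisely the set furnished by \cref{hyp:randomData} together with \cref{prop:kappaEst}; outside this $\prob_\alpha$-null set the estimate is interpreted vacuously, which accounts for the ``$\prob_\alpha$-almost surely'' qualifier.

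First I would treat the initial condition. By \cref{hyp:randomData}.\labelcref{hyp:initial}, $v(\omega_v) \in \XSet$ for $\prob_v$-almost every $\omega_v$, and for such $\omega_v$ the operator-norm inequality gives directly
\[
  \kappa_{v,n}(\omega_v) = \| P_n v(\omega_v) \|_\XSet \leq \| P_n \| \, \| v(\omega_v) \|_\XSet = \| P_n \| \, \kappa_v(\omega_v).
\]
The forcing case is the same estimate applied at each fixed time. For $\prob_g$-almost every $\omega_g$ one has $g(\blank,\omega_g) \in C^0(J,\XSet)$ by \cref{hyp:randomData}.\labelcref{hyp:input}, so $\| P_n g(t,\omega_g) \|_\XSet \leq \| P_n \| \, \| g(t,\omega_g) \|_\XSet$ for every $t \in J$; taking the supremum over $t \in J$ on both sides yields $\kappa_{g,n}(\omega_g) \leq \| P_n \| \, \kappa_g(\omega_g)$.

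The kernel term is the only one requiring a short detour, since $P_n W(\omega_w)$ is a composition of operators rather than the image of a single vector. Here I would invoke \cref{prop:kappaEst}.\labelcref{prop:WBound}, which guarantees that for $\prob_w$-almost every $\omega_w$ the realisation $W(\omega_w)$ lies in $BL(\XSet)$ with $\| W(\omega_w) \|_{BL(\XSet)} \leq \kappa_w(\omega_w)$. Submultiplicativity of the operator norm then gives
\[
  \kappa_{w,n}(\omega_w) = \| P_n W(\omega_w) \|_{BL(\XSet,\XnSet)} \leq \| P_n \| \, \| W(\omega_w) \|_{BL(\XSet)} \leq \| P_n \| \, \kappa_w(\omega_w),
\]
completing the argument. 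There is no genuine obstacle here: the only point demanding attention is that, for the kernel, one must route the estimate through the a priori bound $\| W(\omega_w) \|_{BL(\XSet)} \leq \kappa_w(\omega_w)$ of \cref{prop:kappaEst} rather than attempting a direct kernel-level computation, and that each inequality is asserted only on the relevant full-measure set.
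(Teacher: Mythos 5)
Your proposal is correct and follows essentially the same route as the paper's proof: all three bounds come from the boundedness of $P_n$, with the supremum over $t \in J$ for the forcing and, for the kernel, the estimate chained through $\| W(\omega_w) \|_{BL(\XSet)} \leq \kappa_w(\omega_w)$ from \cref{prop:kappaEst} (the paper writes this submultiplicativity step out via an arbitrary $z \in \XSet$, which is the same argument). Your explicit attention to the full-measure sets on which each estimate holds is a slight refinement in exposition but not a different method.
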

\begin{proof}
  The bound on $W$ holds because, for any $z \in \XSet$ it holds $\prob_w$-almost surely
    \[
    \begin{aligned}
    \| P_n W(\omega_w) z \|_{\XSet} 
      & \leq \| P_n \|\, \| W(\omega_w) z \|_{\XSet} \\
      & \leq \| P_n \|\, \| W(\omega_w) \|_{BL(\XSet)} \| z \|_{\XSet} \\
      & \leq \| P_n \|\, \kappa_w(\omega_w) \| z \|_{\XSet}.
    \end{aligned}
  \]
  In addition, $\prob_g$-almost surely we have
  \[
    \begin{aligned}
    \| P_n g(\blank,\omega_g) \|_{C^0(J,\XSet)}
      & = \sup_{t \in J} \| P_n g(t,\omega_g) \|_{\XSet} \\
      & \leq \| P_n \|\sup_{t \in J} \|g(t,\omega_g) \|_{\XSet}  \\
      & = \| P_n \| \|g(\blank,\omega_g) \|_{C^0(J,\XSet)}  
      = \|P_n \| \kappa_g(\omega_g),
    \end{aligned}
  \]
  and a similar argument gives the bound for $\|P_n v(\omega_v)\|_\XSet$.
\end{proof}

To study the convergence of a numerical scheme, one must bound the error $u
- u_n$ in a suitably defined norm. This, in turn, requires control on the asymptotic
behaviour of projectors acting on realisations of initial conditions, integral
operators, and external inputs, as $n \to \infty$. This leads to studying, for
instance, the sequence $\{P_n v(\omega_v)\}_n \subset \XSet$ for fixed $\omega_v \in
\Omega_v$, or similar sequences for the integral and forcing operators. 

We do not pursue the study of such convergence here, as this is left to applications
of the present theory, and depends on the scheme employed to approximate the random
field. An example of such study can be found in~\cite{avitabile2024Stochastic}.

For completeness, we state the spatially-projected problem under the finite
dimensional noise assumptions.
\begin{problem}[Spatially-projected problem with finite-dimensional
  noise]\label{prob:FiniteDimNRNF_Pn}
  Fix $\LSet$, $w$, $g$, $v$, and possibly $f$. Given the joint density $\rho =
  \prod_{\alpha \in \USet} \rho_\alpha$ of the multivariate variable $y$, find
  $u_n \colon J \times \Gamma \to \XSet$ such that
\begin{equation}\label{eq:unEquation}
  \begin{aligned}
  & u_n'(t , y) = P_n N(t, u_n(t , y) , y), 
    &  t \in (0,T], \\
  & u_n(0 , y) = P_n v(y),
    &
  \end{aligned}
  \qquad \textrm{$\rho \, dy$-a.e. in $\Gamma$.} \\
\end{equation}
\end{problem}

\subsection{Spatially-projected problem \texorpdfstring{on \boldmath$J \times
\Omega$}{with infinite-dimensional noise}}
We can now study the existence of solutions $u_n$ to the projected problem
\cref{eq:NRNFOp_Pn} as a problem on $J \times \Omega$, and relate our findings to the
ones for solutions $u$ to the original problem \cref{eq:NRNFOp}.

\begin{theorem}[Spatially-projected neural field with random data]\label{thm:existenceNF_Pn}
  Under the \crefrange{hyp:domain}{hyp:randomData}, there exists a unique
  strongly measurable $C^1(J,\XSet)$-valued random variable $u_n$ solving \cref{eq:NRNFOp_Pn}
  $\prob$-almost surely, satisfying
   \begin{align}
    & \| u_n(t , \omega) \|_{\XSet} \leq 
    M_n(\omega), \qquad t \in J, \label{eq:UBound_Pn}\\
    & \| u_n(\blank , \omega ) \|_{C^0(J,\XSet)} \leq 
    M_{0,n}(\omega),\label{eq:UC0Bound_Pn}\\
    & \| u_n(\blank , \omega) \|_{C^1(J,\XSet)} \leq 
    M_{1,n}(\omega), \label{eq:UC1Bound_Pn}
   \end{align}
   where the random variables $M_{n}$, $M_{0,n}$, and $M_{1,n}$ are 
   derived from $M$, $M_0$, and $M_1$ in \cref{thm:existenceRealisationLNF}, respectively, 
   upon substituting $\kappa_\alpha$ by $\kappa_{\alpha,n}$ with $\alpha \in
   \{w,g,v\}$.

   If, in addition, $P_n z \to z$ for all  $x \in \XSet$, then there exist random
   variables $\bar M$, $\bar M_0$ $\bar M_1$, independent of $n$, such that
   $\prob$-almost surely it holds
   \[
   M_n(\omega) \leq \bar M(\omega),
   \quad
   M_{0,n}(\omega) \leq \bar M_{0}(\omega),
   \quad
   M_{1,n}(\omega) \leq \bar M_{1}(\omega)
   \quad
   n \in \NSet
   .\] 
\end{theorem}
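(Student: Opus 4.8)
The plan is to recognise that the projected problem \cref{eq:NRNFOp_Pn} is, structurally, an instance of the abstract Cauchy problem of \cref{prob:NRNF}, posed on the finite-dimensional subspace $\XSet_n \subset \XSet$ (endowed with the inherited norm $\| \blank \|_\XSet$), with the integral operator $W(\omega_w)$, the forcing $g$, and the initial datum $v$ replaced by their projected counterparts $P_n W(\omega_w)$, $P_n g$, and $P_n v$. The vector field is $P_n N$, which maps $J \times \XSet_n \times \Omega$ into $\XSet_n$ because $P_n W(\omega_w) \colon \XSet \to \XSet_n$ and $P_n g(t,\omega_g) \in \XSet_n$. Once we verify that this projected data satisfies the analogues of \cref{hyp:randomData} and of the ancillary estimates of \cref{prop:kappaEst}, we re-run the proofs of \cref{thm:volterra,thm:existenceRealisationLNF} verbatim, tracking the substitution $\kappa_\alpha \mapsto \kappa_{\alpha,n}$ throughout.

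The first step is measurability of the projected data. For the operator we write $P_n W = P_n \circ W$: the map $W \colon \Omega_w \to H(\WSet)$ is strongly $\prob_w$-measurable by \cref{prop:kappaEst}.\labelcref{prop:WBound}, and left-composition $A \mapsto P_n A$ is a bounded (hence continuous) linear map from $BL(\XSet)$ into $BL(\XSet,\XSet_n)$; therefore $\omega_w \mapsto P_n W(\omega_w)$ is strongly $\prob_w$-measurable, by the same composition principle (\cite[Corollary 1.13]{vanneervenStochasticEvolutionEquations2008}) used in \cref{thm:volterra}. This argument avoids having to show $P_n w \in \WSet$, which would otherwise require checking the $\nu$-condition for the interpolated kernel when $\XSet = C(D)$. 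Strong measurability of $P_n g$ and $P_n v$ follows identically from the continuity of $P_n$. The associated magnitudes $\kappa_{w,n}, \kappa_{g,n}, \kappa_{v,n}$ of \cref{prop:kappaEst_Pn} are then strongly measurable and satisfy $\kappa_{\alpha,n} \leq \| P_n \| \kappa_\alpha$.

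The second step is the Lipschitz and boundedness estimates for $P_n N$. Since the Nemytskii operator $F$ is unchanged and its pointwise bounds of \cref{prop:kappaEst} hold on $\XSet_n \subset \XSet$, the computation giving \cref{eq:LipN,eq:BoundN} goes through with $\| W(\omega_w) \|_{BL(\XSet)}$ replaced by $\| P_n W(\omega_w) \|_{BL(\XSet,\XSet_n)} = \kappa_{w,n}(\omega_w)$ and $\kappa_g$ replaced by $\kappa_{g,n}$. Concretely, $P_n N$ is Lipschitz in its second argument with constant $\kappa_{N,n}$ obtained from $\kappa_N$ upon substituting $\kappa_w \mapsto \kappa_{w,n}$, and obeys the bound \cref{eq:BoundN} with $B_N$ modified by the substitutions $\kappa_w \mapsto \kappa_{w,n}$ and $\kappa_g \mapsto \kappa_{g,n}$. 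With these in hand, the Volterra operator $\phi_n(u,\omega)(t) = P_n v(\omega_v) + \int_0^t P_n N(s,u(s),\omega)\, ds$ enjoys the analogues of Properties 1--4 of \cref{thm:volterra} (the finite-dimensionality of $\XSet_n$ makes it separable, so measurability and strong measurability coincide and the composition arguments carry over unchanged). The Picard iteration of \cref{thm:existenceRealisationLNF} then produces a $\prob$-almost unique, strongly measurable $C^r(J,\XSet_n)$-valued --- hence $C^r(J,\XSet)$-valued --- solution $u_n$, together with the bounds \crefrange{eq:UBound_Pn}{eq:UC1Bound_Pn}, in which $M_n, M_{0,n}, M_{1,n}$ equal $M, M_0, M_1$ under the substitution $\kappa_\alpha \mapsto \kappa_{\alpha,n}$, $\alpha \in \{w,g,v\}$.

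For the final claim, assume $P_n z \to z$ for every $z \in \XSet$. Pointwise convergence gives pointwise boundedness, so the Banach--Steinhaus theorem yields $C_P := \sup_{n \in \NSet} \| P_n \| < \infty$. By \cref{prop:kappaEst_Pn} we then have $\kappa_{\alpha,n}(\omega_\alpha) \leq C_P\, \kappa_\alpha(\omega_\alpha)$ $\prob_\alpha$-almost surely, uniformly in $n$. The expressions for $M_n, M_{0,n}, M_{1,n}$ are nondecreasing in each of $\kappa_{w,n}, \kappa_{g,n}, \kappa_{v,n}$ (the exponential term $\exp(\kappa_{w,n}T)$ in the linear case being handled by the monotonicity of $\exp$), so replacing each $\kappa_{\alpha,n}$ by $C_P \kappa_\alpha$ furnishes $n$-independent dominating random variables $\bar M, \bar M_0, \bar M_1$; for instance $\bar M_0 = \bigl(C_P\kappa_v(\omega_v) + C_P\kappa_g(\omega_g)T\bigr)\exp\bigl(C_P \kappa_w(\omega_w) T\bigr)$ in the linear case. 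I expect the only genuinely delicate point to be the measurability of $P_n W$, which is why I argue it by composition rather than via membership in $\WSet$; the remainder is a faithful transcription of the earlier proofs with the bookkeeping substitution $\kappa_\alpha \mapsto \kappa_{\alpha,n}$.
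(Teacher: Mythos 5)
Your proposal is correct and follows essentially the same route as the paper: re-running the Volterra/Picard argument of \cref{thm:volterra,thm:existenceRealisationLNF} with $P_nN$ in place of $N$ and $\kappa_{\alpha,n}$ in place of $\kappa_\alpha$, then invoking uniform boundedness ($\sup_n\|P_n\|<\infty$, which the paper cites via \cite[Theorem 2.4.4]{atkinson2005theoretical}) together with \cref{prop:kappaEst_Pn} to obtain the $n$-independent bounds $\bar M$, $\bar M_0$, $\bar M_1$. Your explicit composition argument for the strong measurability of $\omega_w \mapsto P_nW(\omega_w)$ is a welcome fleshing-out of what the paper dismisses as ``minor modifications,'' but it is not a different approach.
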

\begin{proof}
  Existence, uniqueness, and measurability of $u_n$ are proved following steps
  identical to the ones in \cref{thm:existenceRealisationLNF}, upon replacing
  the operator $N$ by the projected operator $P_nN \colon J \times \XSet_n \times \Omega \to
  \XSet_n$. In particular, for fixed $r \in \{ 0,1 \}$ and $n \in \NSet$ we set
  $\YSet_{r,n} = C^r(J,\XSet_n)$, construct the sequence 
  \begin{equation}
    y_0(\omega) = P_n v(\omega), \qquad y_{k+1}(\omega) =
    \phi_n(y_{k}(\omega),\omega), \qquad k \geq 0,
  \end{equation}
  where the operator $\phi_n: \YSet_{0,n} \to \YSet_{r,n}$ acts as the Volterra
  operator in \cref{thm:volterra}, with $P_n N$ in place of $N$. Steps 1--4 in
  \cref{thm:existenceRealisationLNF} are readily adapted with minor
  modifications: the sequence $y_k$, replacing $y_n$, is used to prove existence,
  uniqueness, and measurability of the fixed point of $\phi_n$.

  We then turn our attention to the solution bounds, which we prove separately for
  the linear and nonlinear case. Set $\LSet = 1$. Proceeding as in step 5 in the
  proof of \cref{thm:existenceRealisationLNF} we arrive at
  \[
    \begin{aligned}
      \| u_n(t , \omega) \|_{\XSet} 
      & \leq \| P_n v(\omega_v) \|_{\XSet} + \| P_n g (\blank
	, \omega_g) \|_{C^0(J,\XSet)} t  \\
      & \phantom{\leq \| P_n v(\omega_v) \|_{\XnSet}} 
	+ \| P_n W(\omega_w)
	\|_{BL(\XSet,\XSet_n)} \int_{0}^t \|
	u_n(s , \omega) \|_{\XSet} \, ds \\
      & =  
      \kappa_{v,n}(\omega_v) + \kappa_{g,n}(\omega_g)t + \kappa_{w,n}(\omega_w)
      \int_{0}^t \| u(s , \omega) \|_{\XSet} \, ds ,
    \end{aligned}
  \]
  and the Gr\"onwall lemma gives the bound on $\| u_n(t,\omega)\|_\XSet$. The 
  $C^0(J,\XSet)$ and $C^1(J,\XSet)$ bounds follow as in the proof of
  \cref{thm:existenceRealisationLNF}. 
  Now set $\LSet=0$, to address the nonlinear case. We proceed as in
  \cref{thm:existenceRealisationLNF}, with the operator $P_n K$ in place of $K$. Using
  \cref{prop:kappaEst_Pn} we find the following estimate for almost all $(\omega_w,
  \omega_f,\omega_g)$ 
\[
  \| P_n K(\blank,t,\omega)\|_\XSet \leq \kappa_{g,n}(\omega_g) + \kappa_D
  \kappa_{w,n}(\omega_w)\kappa_f(\omega_f) =: \kappa_n(\omega),
  \qquad t \in (0,T].
\] 
Using integrating factors and the inverse triangle inequality we arrive at
  \[
  \left| 
  \| u_n(\blank,t,\omega) \|_\XSet - e^{-t} \| P_n v(\blank,\omega_v)\|_\XSet
  \right| 
  \leq \kappa_n(\omega)( 1 - e^{-t})
\]
hence
\[
  \| u_n(\blank,t,\omega) \|_\XSet \leq 2\max[ k_{v,n}( \omega_v ), \kappa_n(\omega) ]
,\] 
and the result follows as in \cref{thm:existenceRealisationLNF}. We have now
established the bounds \crefrange{eq:UBound_Pn}{eq:UC1Bound_Pn} for both
$\LSet = 0$ and $\LSet= 1$. 

If $P_n z \to z$ for all $z \in \XSet$ then $\sup_{n \in \NSet} \| P_n \| < \infty $
(see~\cite[Theorem 2.4.4]{atkinson2005theoretical}),
and hence by \cref{prop:kappaEst_Pn} the sequences $\{\kappa_{\alpha,n}(\omega_\alpha)\}_n$ are
bounded almost surely, that is, $\kappa_{\alpha,n}(\omega_\alpha) \leq
\bar{\kappa}_{\alpha}(\omega_\alpha)$ for some positive
$\bar{\kappa}_{\alpha}(\omega_\alpha)$. This gives the existence of random
variables $\bar M$, $\bar M_0$, and $\bar M_1$ independent of $n$, bounding $u_n$
from above, obtained by substituting every occurrence of $\kappa_{\alpha,n}$ with
$\bar{\kappa}_\alpha$ in $M_n$, $M_{0,n}$, and $M_{1,n}$, respectively.
\end{proof}

Next, we look at the $L^p$-regularity and measurability with respect to sub
$\sigma$-algebras of the solution to the projected problem.
Notably, if $u$ is $L^p$-regular (or measurable with respect to a sub-$\sigma$
algebra) then
so is $u_n$, for any $n \in \NSet$, that is, the spatial projectors $P_n$ do not
interfere with the regularity of the solution, which is determined by 
the input data. This is a consequence of
\cref{prop:kappaEst_Pn} which gives estimates for the projected variables starting
from the original variables. 

\begin{theorem}[$L^p$-regularity in the spatially-projected problem] 
  \label{thm:LpRegularity_Pn}
  Under the hypotheses of \cref{thm:LpRegularity}, its conclusions hold for the
  unique solution $u_n$ to \cref{eq:NRNFOp_Pn}.
\end{theorem}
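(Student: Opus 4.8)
The plan is to exploit the structural parallel between the pathwise bounds for the projected solution $u_n$ in \cref{thm:existenceNF_Pn} and those for $u$ in \cref{thm:existenceRealisationLNF}, reducing the $L^p$-regularity of $u_n$ to that of $u$ by means of \cref{prop:kappaEst_Pn}. First I would invoke \cref{thm:existenceNF_Pn} to obtain existence, uniqueness, and strong measurability of $u_n$ as a $C^1(J,\XSet)$-valued random variable, together with the bound $\| u_n(\blank,\omega) \|_{C^1(J,\XSet)} \le M_{1,n}(\omega)$ holding $\prob$-almost surely. Since $M_n$, $M_{0,n}$, $M_{1,n}$ are obtained from $M$, $M_0$, $M_1$ merely by replacing each $\kappa_\alpha$ with $\kappa_{\alpha,n}$ for $\alpha \in \{w,g,v\}$, the entire task reduces to showing $M_{1,n} \in L^p(\Omega)$ for each fixed $n$.

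The key ingredient is \cref{prop:kappaEst_Pn}, which gives $\kappa_{\alpha,n} \le \| P_n \| \kappa_\alpha$ $\prob_\alpha$-almost surely for $\alpha \in \{w,g,v\}$, with $\| P_n \|$ a finite constant for each fixed $n$; note that $\kappa_f$ carries no projector, as $P_n$ acts only on the spatial variable. In the linear case, the hypothesis $w \in L^\infty(\Omega_w,\WSet)$ yields $\kappa_{w,n} \le \| P_n \| \, \| w \|_{L^\infty(\Omega_w,\WSet)}$ almost surely, so the exponential factors $\exp(\kappa_{w,n} T)$ in $M_{0,n}$ and $M_{1,n}$ are bounded by a deterministic constant. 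I would then repeat the computation in the proof of \cref{thm:LpRegularity} essentially verbatim, with $\kappa_{\alpha,n}$ in place of $\kappa_\alpha$: using independence of the noise sources together with $\| \kappa_{v,n} \|_{L^p} \le \| P_n \| \, \| v \|_{L^p(\Omega_v,\XSet)}$ and $\| \kappa_{g,n} \|_{L^p} \le \| P_n \| \, \| g \|_{L^p(\Omega_g,C^0(J,\XSet))}$, one concludes $\| u_n \|_{L^p(\Omega,C^1(J,\XSet))} < \infty$, the only effect of the projection being additional finite factors of $\| P_n \|$.

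For the nonlinear case the same substitution is applied to $M_{0,n} = 2\max[\kappa_{v,n}, \kappa_{g,n} + \kappa_D \kappa_{w,n} \kappa_f]$ and $M_{1,n} = 2 M_{0,n} + \kappa_{g,n} + \kappa_D \kappa_{w,n} \kappa_f$. Using $\kappa_{w,n} \le \| P_n \| \kappa_w$ and the independence of $\kappa_{w,n}$ (a function of $\omega_w$) from $\kappa_f$ (a function of $\omega_f$), the product $\kappa_{w,n} \kappa_f$ lies in $L^p(\Omega)$ with norm at most $\| P_n \| \, \| \kappa_w \|_{L^p} \| \kappa_f \|_{L^p}$; the argument bounding $\beta$, $M_0$, and finally $M_1$ in the proof of \cref{thm:LpRegularity} then transfers unchanged, giving $u_n \in L^p(\Omega,C^1(J,\XSet))$.

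I expect no genuine obstacle here: the delicate point is simply to observe that each $\| P_n \|$ is finite because $P_n \in BL(\XSet,\XSet_n)$, so that no uniform-in-$n$ control of the projectors is required for this statement. The projectors thus do not alter the regularity class of the solution — they only rescale the constants — which is exactly the phenomenon anticipated in the remark preceding the theorem.
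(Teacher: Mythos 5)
Your proposal is correct and follows essentially the same route as the paper's own proof: both reduce the claim to \cref{thm:LpRegularity} by substituting $\kappa_{\alpha,n}$ for $\kappa_\alpha$, using \cref{prop:kappaEst_Pn} and the boundedness of each fixed $P_n$ to control the exponential factor $\exp(\kappa_{w,n}T)$ in the linear case via the $L^\infty$ bound on $w$. Your additional observation that $\kappa_f$ carries no projector and that no uniform-in-$n$ control of $\| P_n \|$ is needed is accurate and consistent with the paper's argument.
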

\begin{proof}
  If $\LSet=1$, the essential boundedness of $w(\omega_w)$ and \cref{prop:kappaEst_Pn} 
  give the bound $\exp(k_{w,n}(\omega_w) t) \leq \exp( \| w
  \|_{L^\infty(\Omega_w,\WSet)}\| P_n
  \|t) $ for all $t \in \RSet_{ \geq 0}$, and any $p,n \in \NSet$. Since the bound is
  independent of $\omega_w$, it follows that $\exp(k_{n,w} t)
  \in L^p(\Omega_w)$ for all $t \in \RSet_{\geq 0}$ and $p,n \in \NSet$. A similar
  reasoning gives $k_{w,n} \exp(k_{w,n}t) \in L^p(\Omega_w)$. The proof then runs
  identically to the one of \cref{thm:LpRegularity}, for both $\LSet= 1$ and $\LSet=
  0$, with $\kappa_{\alpha}$ replaced by $\kappa_{\alpha,n}$ for all $\alpha \in \{
  w,f,g,v \}$, and is omitted here. 
\end{proof}

\begin{corollary}[Sub $\sigma$-algebra measurability in the projected problem] 
  \label{cor:subSigma_Pn}
  Under the hypotheses of \cref{cor:subSigma}, its conclusions hold for the unique
  solution $u_n$ to \cref{eq:NRNFOp_Pn}.
\end{corollary}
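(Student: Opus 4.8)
The plan is to follow the proof of \cref{cor:subSigma} line by line, exploiting the fact that the spatial projector $P_n$ is a fixed, deterministic bounded linear operator acting solely on the spatial variable: it introduces no new randomness and does not couple the independent noise sources, so the product structure $\calG = \bigtimes_{\alpha} \calG_\alpha$ is preserved. First I would recall from the proof of \cref{thm:existenceNF_Pn} that $u_n$ is the $\prob$-almost-sure limit of the Picard iterates $y_0(\omega) = P_n v(\omega)$ and $y_{k+1}(\omega) = \phi_n(y_k(\omega),\omega)$, where $\phi_n$ is the Volterra operator of \cref{thm:volterra} with $P_n N$ in place of $N$.

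Next, I would verify that each projected datum inherits the sub-$\sigma$-algebra measurability of the corresponding input. Because $P_n \in BL(\XSet,\XSet_n)$ is continuous, and continuous images of strongly measurable random variables are strongly measurable \cite[Corollary 1.13]{vanneervenStochasticEvolutionEquations2008}, the hypotheses of \cref{cor:subSigma} (that $v$, $g$ are $\calG_v$-, $\calG_g$-measurable, and, in the nonlinear case, $f$ is $\calG_f$-measurable) transfer directly: $P_n v$ is $\calG_v$-measurable and $P_n g$ is $\calG_g$-measurable. For the kernel I would invoke the identity $P_n W(\omega_w) = H(P_n w(\blank,\blank,\omega_w))$ of \cref{eq:HwProj}; since both $P_n$ and $H$ (by \cref{prop:HMapping}) are continuous, $P_n W$ is $\calG_w$-measurable. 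In the nonlinear case the Nemytskii factor $\lambda(\omega_f)(t) = F(u(t),\omega_f)$ remains $\calG_f$-measurable exactly as in \cref{prop:kappaEst}.\labelcref{prop:NemitskiProperties}, since the projector leaves the $\omega_f$-dependence untouched.

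With these observations in place, I would reproduce the inductive argument of \cref{cor:subSigma}. Writing $y_{k+1}$ through the continuous auxiliary map $\psi$ of \cref{eq:PsiAux} evaluated at the projected data, each iterate is a continuous transformation of the tuple $(y_k, P_n v, P_n W, \lambda, P_n g)$ (with $\lambda$ replaced by $y_k$ itself in the linear case), which is $\calG$-measurable by the inductive hypothesis together with the inclusions $\calG_\alpha \subset \calG$; hence $y_{k+1}$ is $\calG$-measurable by \cite[Corollary 1.13]{vanneervenStochasticEvolutionEquations2008}, the base case holding because $P_n v$ is $\calG_v$-measurable. Since $\calG$-measurability is stable under the $\prob$-almost-sure limit, the fixed point $u_n = \lim_k y_k$ is $\calG$-measurable. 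I do not expect a genuine obstacle here; the only point requiring care is confirming that the deterministic projector neither creates randomness nor mixes the factors $\Omega_\alpha$, which is immediate because $P_n$ acts only on the spatial variable $x$, and this is exactly what preserves measurability with respect to the product $\sigma$-algebra $\calG$.
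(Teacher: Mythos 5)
Your proof is correct in substance, but it takes a genuinely different route from the paper's. The paper proves \cref{cor:subSigma_Pn} in one line: the proof of \cref{cor:subSigma} applies verbatim to the projected problem. That proof, in turn, never tracks measurability through the Picard iteration; it simply observes that if $w$, $g$, $v$ (and $f$) are measurable with respect to the sub $\sigma$-algebras $\calG_\alpha$, then the regularity theorem (\cref{thm:LpRegularity}, resp.\ \cref{thm:LpRegularity_Pn}) can be re-applied on the probability space equipped with $\calG_\alpha$ in place of $\calF_\alpha$, producing a solution $z \in L^p(\Omega,\calG,C^1(J,\XSet_n))$ that is $\calG$-measurable by construction; $\prob$-almost-sure uniqueness then forces $u_n = z$, so $u_n$ is $\calG$-measurable. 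This black-box-plus-uniqueness argument is shorter and requires no re-examination of the data. Your white-box approach --- showing that $P_n v$, $P_n g$, and $P_n W = H(P_n w)$ inherit $\calG_\alpha$-measurability because the projector is deterministic and purely spatial, and then running the measurability induction over the iterates $y_k$ --- is essentially an unpacking of what the paper's theorems do internally; it buys a self-contained argument that makes explicit why the projector can neither create randomness nor mix the factors of the product $\sigma$-algebra, at the cost of redoing work the paper reuses wholesale.

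One step of your induction needs more care than you give it. In the nonlinear case, the tuple you feed into $\psi$ contains $\lambda_k(\omega)(t) = F(y_k(\omega)(t),\omega_f)$, whose argument $y_k(\omega)$ is itself random, whereas \cref{prop:kappaEst}.\labelcref{prop:NemitskiProperties} only gives measurability of $\omega_f \mapsto F(u(t),\omega_f)$ for a \emph{fixed} $u \in C^0(J,\XSet)$. So $\lambda_k$ is not ``$\calG_f$-measurable exactly as in'' that proposition, and it is not a component whose measurability follows from the inductive hypothesis alone. Its $\calG$-measurability must be argued separately: either by the simple-function decomposition used in Property 3 of \cref{thm:volterra} (run with $\calG$ in place of $\calF$), or by noting that $(u,f) \mapsto \bigl(t \mapsto f \circ u(t)\bigr)$ is jointly continuous on $C^0(J,\XSet) \times BC^1(\RSet)$ and composing the $\calG$-measurable pair $(y_k, f)$ with this map. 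Since either repair works, this is an imprecision rather than a fatal gap, but as written the induction step skips it.
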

\begin{proof}
    The proof is identical to the one of \cref{cor:subSigma}, and is omitted.
\end{proof}

In this section we have transplanted results obtained for the solution $u$ of
\cref{eq:NRNFOp}, to the solution $u_n$ of \cref{eq:NRNFOp_Pn}. We conclude this
section by discussing properties of $u_n$ under
the finite-dimensional noise assumption.

\begin{lemma}[Finite-dimensional noise in the projected problem]
  \label{lem:finDimNoise_Pn}
  Under the hypotheses of \cref{thm:LpRegularity_Pn},
  if the random inputs satisfy the finite-dimensional noise
  \cref{hyp:finDimNoise}, then the unique solution $u_n \in L^p(\Omega,C^1(J,\XSet_n))$ to
  \cref{eq:NRNFOp} is
$m$-dimensional noise of the form
$u_n(\blank, \blank,\omega) = \tilde u_n(\blank,\blank,Y(\omega))$,
where $m = \sum_\alpha m_\alpha$, and 
$Y(\omega) = (Y_w(\omega_w), Y_g(\omega_g), Y_v(\omega_v))$, if $\LSet= 1$, or
$Y(\omega) = (Y_w(\omega_w), Y_f(\omega_f), Y_g(\omega_g), Y_v(\omega_v))$, if
$\LSet= 0$.
\end{lemma}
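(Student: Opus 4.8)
The plan is to follow the proof of \cref{lem:finDimNoise} almost verbatim, replacing the vector field $N$ by its spatial projection $P_n N$ throughout, and to exploit that $P_n$ is a \emph{deterministic} operator acting only on the spatial variable, so it cannot introduce any dependence on $\omega$ beyond the one already carried by the finite-dimensional variable $Y$. The $L^p$-regularity $u_n \in L^p(\Omega,C^1(J,\XSet_n))$ is furnished directly by \cref{thm:LpRegularity_Pn}; what remains is to establish the factorisation $u_n(\blank,\blank,\omega) = \tilde u_n(\blank,\blank,Y(\omega))$ through the noise variable, equivalently to show that $u_n$ is $\sigma(Y)$-measurable and then to invoke the factorisation (Doob--Dynkin) lemma, exactly as in the adaptation of \cite{martinezfrutosOptimalControlPDEs2018} underlying \cref{lem:finDimNoise}.

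First I would record that, under \cref{hyp:finDimNoise}, each projected datum depends on $\omega$ only through $Y$. Using \cref{eq:HwProj} the projected operator reads $P_n W(\omega_w) = H\bigl(P_n w(\blank,\blank,\omega_w)\bigr)$, and since $w(\blank,\blank,\omega_w) = \tilde w(\blank,\blank,Y_w(\omega_w))$ with $P_n$ non-random, $P_n W$ is a function of $Y_w$ alone; likewise $P_n g(t,\omega_g)$, $P_n v(\omega_v)$ and, in the nonlinear case, $F(\blank,\omega_f)$ are deterministic functions of $Y_g$, $Y_v$ and $Y_f$ respectively. Hence the projected vector field $P_n N(t,\blank,\omega)$ is a deterministic function of $(t,\blank,Y(\omega))$, and the associated Volterra map $\phi_n$ from the proof of \cref{thm:existenceNF_Pn} depends on $\omega$ only through $Y(\omega)$.

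Next I would feed this into the Picard iteration of \cref{thm:existenceNF_Pn}, namely $y_0(\omega) = P_n v(\omega)$ and $y_{k+1}(\omega) = \phi_n(y_k(\omega),\omega)$. By induction each iterate is $\sigma(Y)$-measurable: the base case holds because $P_n v$ factors through $Y_v$, and the inductive step holds because $\phi_n$ is assembled from the $Y$-measurable data identified above (via the continuous assembly map $\psi$ of \cref{eq:PsiAux}, now with $P_n$ inserted). The contraction estimate of \cref{thm:existenceRealisationLNF}, carried over to the projected setting in \cref{thm:existenceNF_Pn}, guarantees that $\{y_k(\omega)\}_k$ converges $\prob$-almost surely in $C^1(J,\XSet_n)$ to the unique solution $u_n(\omega)$; the $\prob$-a.s.\ limit of $\sigma(Y)$-measurable maps is again $\sigma(Y)$-measurable, so $u_n$ is $\sigma(Y)$-measurable. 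The factorisation lemma then yields a Borel map $\tilde u_n$ with $u_n(\blank,\blank,\omega) = \tilde u_n(\blank,\blank,Y(\omega))$, and the form of $Y$ in the linear and nonlinear cases is read off directly from \cref{hyp:finDimNoise}. Equivalently, $\tilde u_n$ is the unique solution of the deterministic parametric problem \cref{eq:unEquation} of \cref{prob:FiniteDimNRNF_Pn}.

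The point to check — more bookkeeping than genuine difficulty — is that the projector does not silently reintroduce full $\omega$-dependence: one must confirm that the factorisation commutes with $P_n$ and with the continuous assembly map, so that $Y$-measurability is preserved at every iterate and in the limit. Since $P_n \in BL(\XSet,\XSet_n)$ is non-random and bounded, this is immediate, and the argument reduces cleanly to the continuous case \cref{lem:finDimNoise} with $P_n N$ in place of $N$; accordingly the detailed verification may be omitted.
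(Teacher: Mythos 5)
Your proposal is correct, and its skeleton --- observe that the projected data still factor through $Y$ because $P_n$ is deterministic, deduce $\sigma(Y)$-measurability of $u_n$, then apply Doob--Dynkin --- matches the paper's. The genuine divergence is in how the measurability step is carried out. The paper's proof of \cref{lem:finDimNoise_Pn} is a one-line reduction: repeat the proof of \cref{lem:finDimNoise} with the projected results in place of the continuous ones; and that proof obtains $\sigma(Y)$-measurability in a single stroke by applying the regularity theorem (here \cref{thm:LpRegularity_Pn}, i.e.\ the mechanism of \cref{cor:subSigma_Pn}) on the probability space equipped with the sub $\sigma$-algebras $\calG_\alpha = \sigma_\alpha(Y_\alpha)$, and then invoking $\prob$-almost sure uniqueness: the solution produced on the smaller measurable space must coincide with $u_n$, which is therefore $\sigma(Y)$-measurable. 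You instead unfold that packaged argument, tracking $\sigma(Y)$-measurability through the Picard iterates $y_{k+1}(\omega) = \phi_n(y_k(\omega),\omega)$ by induction and passing to the $\prob$-a.s.\ limit. The two are mathematically equivalent --- the sub-$\sigma$-algebra corollary is proved precisely by re-running the fixed-point construction on the smaller space --- but your route obliges you to re-examine the iteration, including the slightly fussy point that $\omega \mapsto \phi_n(y_k(\omega),\omega)$ stays $\sigma(Y)$-measurable (this needs the simple-function approximation of Property 3 of \cref{thm:volterra} redone with respect to $\sigma(Y)$, which you gesture at via the assembly map $\psi$ but do not spell out), whereas the paper gets it for free from machinery already in place. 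What your version buys is transparency: it makes explicit, rather than implicit, why the deterministic projector cannot reintroduce $\omega$-dependence. One caveat common to both arguments: an a.s.\ limit of $\sigma(Y)$-measurable maps is $\sigma(Y)$-measurable only up to modification on a $\prob$-null set; the paper glosses this identically, so it is not a gap relative to the paper's own standard of rigour.
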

\begin{proof}
The proof follows identical steps to \cref{lem:finDimNoise}, where the corresponding 
results on the space-projected problem stated above should be used in place of the ones in \cref{sec:uniqueness}.
\end{proof}

\begin{corollary}[$L^p_\rho$-regularity in the
  projected problem]
  \label{cor:uLRho_Pn}
Under the hypotheses of \cref{lem:finDimNoise}, the unique
solution $u_n$ to \cref{eq:NRNFOp_Pn} is in $L^p_\rho(\Gamma,C^1(J,\XSet_n))$.
\end{corollary}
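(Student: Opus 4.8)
The plan is to transcribe the proof of \cref{cor:uLRho} to the projected setting, replacing each ingredient by its spatially-projected counterpart established above. The only substantive content is the identification of the abstract Bochner space $L^p(\Omega, C^1(J,\XSet_n))$ with the parametric space $L^p_\rho(\Gamma, C^1(J,\XSet_n))$, which rests on the change-of-variables formula for the pushforward measure $Y_\ast\prob = \rho\, dy$.

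First I would observe that the hypotheses of \cref{lem:finDimNoise}, namely \crefrange{hyp:domain}{hyp:randomData} together with the finite-dimensional noise \cref{hyp:finDimNoise}, subsume the hypotheses of \cref{thm:LpRegularity_Pn}: indeed \cref{hyp:finDimNoise} bundles precisely the $L^\infty$- and $L^p$-integrability assumptions on $w$, $f$, $g$, and $v$ required by \cref{thm:LpRegularity}. Invoking \cref{thm:LpRegularity_Pn} then yields that the unique solution $u_n$ to \cref{eq:NRNFOp_Pn} belongs to $L^p(\Omega, C^1(J,\XSet_n))$. Next I would invoke \cref{lem:finDimNoise_Pn} to extract the finite-dimensional-noise structure $u_n(\blank,\blank,\omega) = \tilde u_n(\blank,\blank,Y(\omega))$, where $Y \colon \Omega \to \Gamma$ with $Y \sim \rho$; this produces the deterministic-in-$y$ representative $\tilde u_n$ and guarantees its strong measurability on $(\Gamma, \calB(\RSet^m), \rho\, dy)$.

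Finally I would convert the $L^p(\Omega,\blank)$ bound into the desired $L^p_\rho(\Gamma,\blank)$ bound by means of the equivalence of the three formulations recorded just before \cref{cor:uLRho}. Concretely, since $Y_\ast\prob = \rho\, dy$, the change-of-variables formula gives
\[
  \| \tilde u_n \|^p_{L^p_\rho(\Gamma,C^1(J,\XSet_n))}
  = \int_\Gamma \| \tilde u_n(\blank,\blank,y) \|^p_{C^1(J,\XSet_n)}\, \rho(y)\, dy
  = \int_\Omega \| u_n(\blank,\blank,\omega) \|^p_{C^1(J,\XSet_n)}\, d\prob(\omega) < \infty,
\]
whence $\tilde u_n \in L^p_\rho(\Gamma, C^1(J,\XSet_n))$, which is the assertion.

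The main obstacle is bookkeeping rather than analytic: one must ensure that the representative $\tilde u_n$ is genuinely $\rho\, dy$-measurable and that the pushforward identity applies to the Banach-valued integrand. Both points are already discharged --- the measurability by the noise structure furnished in \cref{lem:finDimNoise_Pn}, and the change of variables by the standard pushforward argument underlying the equivalence preceding \cref{cor:uLRho}. Everything else is a verbatim transcription of the proof of \cref{cor:uLRho}, with $\XSet$ replaced throughout by $\XSet_n$ and the projected regularity results used in place of their unprojected versions.
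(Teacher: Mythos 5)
Your proposal is correct and is essentially the argument the paper intends: the paper states \cref{cor:uLRho_Pn} without an explicit proof, treating it as an immediate consequence of \cref{thm:LpRegularity_Pn} (giving $u_n \in L^p(\Omega,C^1(J,\XSet_n))$), \cref{lem:finDimNoise_Pn} (giving the representation $u_n = \tilde u_n(\blank,\blank,Y(\omega))$), and the equivalence of the $L^p(\Omega,\blank)$ and $L^p_\rho(\Gamma,\blank)$ formulations via the pushforward $Y_\ast\prob = \rho\,dy$ recorded before \cref{cor:uLRho}. Your write-up simply makes this chain explicit, including the correct observation that \cref{hyp:finDimNoise} already contains the integrability assumptions needed to invoke \cref{thm:LpRegularity_Pn}.
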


\section{Conclusions}\label{sec:conclusions} In this paper we have studied neural field
equations as Cauchy problems subject to random data. We have provided theoretical
background and estimates instrumental to prove convergence of numerical schemes, and
to derive their convergence rates. We expect this theory to be employed in schemes
that combine a spatial numerical discretisation of Collocation or Galerkin, Finite
Elements or Spectral type, to Stochastic Collocation, Stochastic Finite Elements, and
Monte Carlo methods. We chose to present neural field equations in their simplest
form, with a single neuronal population, as we believe this is the case of interest
when one aims to study theoretical convergence properties of numerical schemes. Our
theory is essentially applicable without modification to neural mass models, which
correspond to a finite-dimensional version of neural fields, with $\XSet = \RSet^n$,
and in which integral operators are replaced by matrix-vector multiplications. The
result we presented in the projected equation covers this case.

Rather than deriving a theory for neural fields with $p$ populations, which
amounts to a switch in function spaces (for instance from $\XSet = L^2(D)$ to $\XSet =
\bigl(L^2(D)\bigr)^p$, see \cite{faugeras2008absolute} for an example) we chose to present
jointly linear and nonlinear neural fields with one population, because we envisage
the latter to be more relevant in the context of numerical analysis. An interesting
future \revision{extension} of the present theory concerns neuronal networks of second generation,
which are derivable as exact limits of networks of spiking, quadratic
integrate-and-fire neurons
\cite{montbrioMacroscopicDescriptionNetworks2015,
laingExactNeuralFields2015,coombesNextGenerationNeural2016}.
Spatially-extended, continuous models of this type have been proposed in literature
\cite{Esnaola-Acebes.2017,byrne2019next,laingInterpolatingBumpsChimeras2021}, and
they overcome some biological limitations of neural fields of first generation, as
firing rates are an emergent feature in these models. Even though these models are
nonlinear, nonlocal and similar in structure to the ones studied here, our theory can
not be immediately extended in that context, because their functional analytical
setup and well-posedness are still unavailable in literature. We expect, however,
that similar arguments to the ones used here should be valid for second generation
neural fields too. Further, the present work opens up the possibility of studying
problems in which the neural field equations involve or are coupled to diffusion and
reaction processes. This occurs, for instance, when metabolic or dendritic processes
are included in the model
\cite{avitabileNumericalInvestigationNeural2020,
idumah2023,avitabileWellPosednessRegularitySolutions2024}. It seems now possible to
study coupled problems by combining our results to the ones available for
elliptic and parabolic PDEs \cite{babuskaStochasticCollocationMethod2007,Zhang.2012}.

\section*{Acknowledgements} We are grateful to Jan van Neerven for helpful
discussions. 
\revision{
We are particularly grateful to an anonimous reviewer for pointing out a
  shortcut to prove \cref{thm:volterra} more elegantly and concisely 
(our first attempt is accessible at \url{https://arxiv.org/abs/2505.16343v2}).
} DA and FC acknowledge support from the National Science Foundation
under Grant No. DMS-1929284 while the authors were in residence at the Institute
for Computational and Experimental Research in Mathematics in Providence, RI,
during the “Math + Neuroscience: Strengthening the Interplay Between Theory and
Mathematics" programme.

\appendix
\label{sec:appendix}

\section{Additional proofs}
\begin{proof}[Proof of \cref{prop:HMapping}]\label{proof:HMapping}
  Fix $k \in \WSet$. The compactness of $H(k)$ is proved in 
  \cite[Sections 1.2.1 and 1.2.3]{atkinson1997}. The bound \cref{eq:TBound} is a
  rewriting of \cite[Lemma 2.6]{avitabileProjectionMethodsNeural2023}, see also \cite[Equation
  1.2.21 and Equation 1.2.34]{atkinson1997}. The linear mapping $H$ is therefore
  continuous on $\WSet$ to $K(\XSet)$. 

  Further, the continuous image of a separable
  topological space is separable \cite[Theorem 16.4.a]{willardGeneralTopology2004},
  hence the separability of $\WSet$ implies the separability of $H(\WSet)$. We
  provide below an argument that is specific to normed spaces, and to the $H$ defined
  above 
Since $\WSet$ is a separable metric space, it contains a countable dense subset $W_0
\subset \WSet$. Consider the subset $H_0 = \{ H(w) \colon w \in W_0\} \subset
H(\WSet)$. Since $W_0$ is countable, then $H_0$ is countable. We claim that $H_0$ is
also dense in $H$. Since $W_0$ is dense in $\WSet$, then for any $w \in \WSet$ and $\eps
>0 $ there exists $w_0 \in W_0$ such that $ \| w - w_0 \|_\WSet < \eps$. Fix $h \in
H(\WSet)$ and $\eps > 0$, then $h = H(w)$ for some $w \in \WSet$; let $h_0 = H(w_0) \in
H_0$ and estimate
\[
  \| h - h_0 \|_{BL(\XSet)} \leq \| w - w_0 \|_{\XSet} < \eps,
\]
hence for any $h \in H(\WSet)$ and $\epsi > 0$ there is an $h_0 \in H_0$ such that $
\| h-h_0 \|_{BL(\XSet)} < \epsi$, therefore $H_0$ is dense in $H(\WSet)$. Since $H_0$
is a countable dense subspace of the metric space $H(\WSet)$, then $H(\WSet)$ is
separable.
\end{proof}

\begin{proof}[Proof of \cref{prop:kappaEst}]\label{proof:kappaEst}
  \textit{Statement 1}. Since $\| \blank \|_{\WSet} \colon \WSet \to \RSet$ is a norm, 
  and $\omega \mapsto w(\blank,\blank,\omega)$ is strongly $\prob_w$-measurable on
  $\Omega$ to $\WSet$ by \cref{hyp:randomData}.\labelcref{hyp:kernel}, then
  \cite[Corollary 1.1.24]{hytonenAnalysisBanachSpaces2016a} guarantees that
  the composition mapping $\omega \mapsto \| w(\blank,\blank,\omega)\|_{\WSet}$ is a
  strongly $\prob_w$-measurable random variable (similar considerations apply for the
  random variables $\kappa_g$, $\kappa_f$, $\kappa_{f'}$, and $\kappa_{v}$ appearing
  in other statements of this proposition). 

  Further,  $H \colon \WSet \to K(\XSet) \subset BL(\XSet)$ is continuous by
  \cref{prop:HMapping}, and hence measurable, and a further application of \cite[Corollary
  1.1.24]{hytonenAnalysisBanachSpaces2016a} gives the strong measurability
  of the composition $W(\omega) = H(w(\blank,\blank,\omega))$, hence $W(\omega)$ is a
  strongly $\prob_w$-measurable $H(\WSet)$-valued random variable. The estimate 
  in statement 1 follows from the estimate in \cref{prop:HMapping}.

  \textit{Statements 2 to 4}. These statements follow directly from the definitions
  on norms and \cref{hyp:randomData} (see also proof of Statement 1 above).
  
  \revision{
  \textit{Statement 5}. We fix $u \in \YSet := C^0(J,\XSet)$ and prove
  the existence of a sequence of simple functions $\{ \lambda_k \}_k$
  converging $\prob_f$-almost surely to $\lambda$ in $\YSet$. By \cref{hyp:randomData} there
  exists a sequence of functions $\{ f_k(\blank,\omega) \}_k \subset BC^1(\RSet)$ of
  the form
  \[
    f_k (\blank, \omega) = \sum_{k'=1}^{n_k} 1_{A_{kk'}}(\omega) \xi_{kk'},
    \qquad A_{kk'} \in \calF_f, \qquad \xi_{kk'} \in BC^1(\RSet),
    \qquad k' \in \NSet_{n_k} \qquad k \in \NSet,
  \]
  and such that $\| f(\blank,\omega) - f_k(\blank,\omega)\|_{BC^1(\RSet)} \to 0$ 
  as $k \to \infty$ for almost all $\omega_f \in \Omega_f$. This implies that the
  operators $F_k \colon \XSet \times \Omega \to \XSet$ defined by $F_k(v,\omega)(x) =
  f_k(v(x),\omega)$ satisfy
  \[
    \| F( v, \omega) - F_k(v,\omega)\|_{\XSet} \leq 
    \kappa_D \| f(\blank,\omega) - f_k(\blank,\omega)\|_{BC(\RSet)} \to 0 \qquad \text{$\prob_f$-almost surely for all
    $v \in \XSet$.}
  \]
  We let
  \[
    \lambda_k(\omega)(t)(x) := F_k(u(t),\omega)(x)
    = \sum_{k'=1}^{n_k} 1_{A_{kk'}}(\omega) \; \xi_{kk'}\bigl(u(t)(x)\bigr),
    \qquad \lambda_k(\omega) \in \YSet,
    \qquad k \in \NSet,
  \]
  and derive, for almost all $\omega \in \Omega_f$
  \[
    \| \lambda(\omega) - \lambda_k(\omega)\|_{\YSet} = \max_{t \in J} \|
    F(u(t),\omega) - F_k(u(t),\omega) \|_{\XSet} 
    \leq \kappa_D \| f(\blank,\omega) - f_k(\blank,\omega)\|_{BC(\RSet)} \to 0.
  \]
  }
  \textit{Statement 6}. To prove estimate \cref{eq:LipN} in the linear
  case ($\LSet=1$) note that the hypotheses imply that for almost all $(\omega_w,
  \omega_g) \in \Omega_w \times
  \Omega_g$, the mapping $(t,u) \mapsto -u + W(\omega_w) u + g(t , \omega_g)$ is on
  $J \times \XSet$ to $\XSet$. To show continuity, consider a sequence $\{ (t_k,u_k) \}$
  converging to $(t,u) \in J \times \XSet$, and note that if $\LSet = 1$
  \begin{multline}
    \| N(t_k,u_k,\omega_w,\omega_g) - N(t,u,\omega_w,\omega_g) \|_\XSet 
    \leq (1+ \kappa_w(\omega_w) ) \| u_k - u \|_\XSet \\ + \|g(t_k, \omega_g) - g(t,
    \omega_g)\|_\XSet
  \end{multline}
  can be made arbitrarily small for almost all $(\omega_w,\omega_g) \in \Omega_v \times
  \Omega_g$, by taking $k$ sufficiently large, owing to the
  convergence of $u_k$ to $u$, and to the continuity of $t \mapsto g(t ,
  \omega_g)$. Setting $t_k = t$ in the previous bound, proves the Lipschitz
  condition for $\LSet = 1$. Estimate \cref{eq:LipN} for the nonlinear case $\LSet = 0$ holds
  because the hypotheses of \cite[Lemma 2.7]{avitabileProjectionMethodsNeural2023} 
  hold for almost all $(\omega_w,\omega_f,\omega_g)$. The estimate \cref{eq:BoundN}
  is a direct consequence of the triangle inequality and Statements 1--4.
\end{proof}

\begin{proof}[Proof to \cref{cor:subSigma}]\label{proof:subSigma}
  Let us consider first the linear case, $\LSet = 1$. \Cref{thm:LpRegularity}
  implicitly assumes that $w$, $g$, $v$ are $\calF_w$-, $\calF_g$-
  $\calF_v$-measurable functions, and this resulted in $u$ being $\calF_w \times
  \calF_g \times \calF_v$-measurable.
  In passing, we note that such $\sigma$-algebras have been omitted in the notation
  of the function spaces for simplicity, but will be reinstated below for the sub
  $\sigma$-algebras. If $w$ is $\calG_w$-measurable, $g$ $\calG_g$-measurable, and $v$
  $\calG_v$-measurable, then $w \in L^p(\Omega_w,\calG_w,\WSet)$, $g \in
  L^p(\Omega_g,\calG_g,C^0(J,\XSet))$, and $v \in L^p(\Omega_v,\calG_v,\XSet)$,
  respectively. \cref{thm:LpRegularity} can be applied with $\calF_\alpha$ replaced
  by $\calG_\alpha$, and we conclude that there exists a solution, say $z \in L^p(\Omega,\calG,C^1(J,\XSet))$ to
  \cref{eq:NRNFOp}. But the solution to \cref{eq:NRNFOp} is unique, hence $u = z$ is
  $\mathcal{G}$-measurable. The proof for the nonlinear case follows
  almost identical steps of the linear one, and we omit it.
\end{proof}

\begin{proof}[Proof of \cref{lem:finDimNoise}]\label{proof:finDimNoise}
  We present a proof for the nonlinear case $\LSet = 0$, and omit the
  one for $\LSet =1$, which is almost identical. For each $\alpha \in \USet = \{w,f,g,v\}$,
  consider the probability space $(\Omega_\alpha,\sigma_\alpha(Y_\alpha),
  \prob_\alpha)$ where $\sigma_\alpha$ is the $\sigma$-algebra generated by
  $Y_\alpha$, that is, 
  $
  \sigma_\alpha(Y_\alpha) = \{ Y^{-1}_\alpha(B) \colon B \in
  \mathcal{B}(\RSet^{m_\alpha})\}
  $. Since $w(x,x',\blank)$ depends on $Y_w$, it is
  $\sigma_w(y_w)$-measurable, and similar considerations hold for $f$, $g$, and
  $v$. By \cref{def:finDimNoise}, $\sigma_\alpha(y_\alpha) \subset \calF_\alpha$ is a
  $\sigma$-algebra. We can therefore apply \cref{thm:LpRegularity} for $\calG_\alpha =
  \sigma_\alpha(Y_\alpha)$, $\alpha \in \USet$, and we conclude that $u(x,t,\blank)$ is
  $\sigma(Y)$-measurable, for all $(x,t) \in D \times J$, where $\sigma(Y) =
  \times_\alpha \sigma_\alpha(Y_\alpha)$. By the Doob--Dynkin Lemma (see
  \cite[Lemma 4.1]{martinezfrutosOptimalControlPDEs2018}) for any $(x,t) \in D \times
  J$ there exists a measurable function $h_{x,t} \colon \RSet^m \to \RSet$ such that
$u(x,t,y) = h_{x,t}(y)$, that is, $u(x,t,\omega) = h_{x,t}(Y(\omega)) =: \tilde
u(x,t,Y(\omega))$, which completes the proof.
\end{proof}


\bibliography{references}
\bibliographystyle{siamplain}
 
\end{document}